\theoremstyle{plain}
\newtheorem{theorem}{Theorem} 
\newtheorem{lemma}[theorem]{Lemma}
\newtheorem{proposition}[theorem]{Proposition}
\newtheorem{corollary}[theorem]{Corollary}
 \theoremstyle{definition}
 \theoremstyle{remark}
\newtheorem*{remark*}{Remark} \newtheorem{remark}{Remark}
\newcommand{\R}{\mathbb{R}}
\newcommand{\Rd}{{\R^{d}}}
\newcommand{\Rdz}{{\R^{d}\setminus\{0\}}}
\newcommand{\N}{\mathbb{N}}
\newcommand{\Z}{\int^{\infty}_{0}}
\renewcommand{\leq}{\leqslant} \renewcommand{\le}{\leq}
\renewcommand{\geq}{\geqslant} \renewcommand{\ge}{\geq}
\def\({\left(}
\def\){\right)}
\def\[{\left[}
 \def\]{\right]}
\newcommand{\E}{\mathbb{E}}
\newcommand{\p}{\mathbb{P}}
\newcommand{\lC}{{\underline{c}}}
\newcommand{\uC}{{\overline{C}}}
\newcommand{\la}{{\underline{\alpha}}}
\newcommand{\ua}{{\overline{\alpha}}}
\newcommand{\lt}{{\underline{\theta}}}
\newcommand{\ut}{{\overline{\theta}}}
\definecolor{kb}{rgb}{0,0,0}
\definecolor{mr}{rgb}{0,0,0}
\definecolor{tg}{rgb}{0,0,0}
\newcommand{\kb}[1]{{\textcolor{kb}{#1}}}
\newcommand{\mr}[1]{{\textcolor{mr}{#1}}}
\newcommand{\tg}[1]{{\textcolor{tg}{#1}}}
\definecolor{ed}{rgb}{0,0,0}
\title{
Density and tails
of
unimodal
convolution semigroups
\thanks{\emph{2010 MSC:} Primary 47D06, 60J75; Secondary 60G51. \emph{Keywords:} L\'evy-Khintchine exponent, heat kernel, transition density, unimodal isotropic L\'evy process, L\'evy measure, \tg{subordinate Brownian motion}}
\thanks{The research was
supported in part by NCN grants 2011/03/B/ST1/00423 and 2012/07/B/ST1/03356.}
\thanks{Authors' affiliations and emails: Institute of Mathematics of the Polish Academy of Sciences, 
Institute of Mathematics and Computer Science,
  Wroc\l{}aw University of Technology, 
ul. Wyb. Wyspia\'{n}skiego
27, 50-370 Wroc\l{}aw, Poland, krzysztof.bogdan@pwr.wroc.pl, 
tomasz.grzywny@pwr,wroc.pl, michal.ryznar@pwr.wroc.pl.
}
}
\author{Krzysztof Bogdan \and
Tomasz Grzywny
\and Micha\l{} Ryznar
}
\begin{document}
\maketitle
\begin{abstract}
We give
sharp bounds for the
isotropic unimodal
probability convolution semigroups when their L\'evy-Khintchine
exponent has \kb{Matuszewska indices
strictly between $0$ and $2$. }
\end{abstract}
\section{Introduction}\label{introit}
Estimating Markovian semigroups is important from the point of view of theory and applications because they describe evolution phenomena and underwrite various forms of calculus.
Diffusion semigroups traditionally receive most attention \cite{MR2648271} but considerable progress has also been made
in studies of transition densities of
rather general jump-type Markov processes.
Such studies  are usually based on assumptions concerning the profile
of the jump or L\'evy kernel (measure) at the diagonal (origin) and at infinity \cite {MR2357678,
MR2806700}. As a rule the assumptions
can be viewed as approximate or weak scaling conditions for the L\'evy density, to which
some
structure conditions may be added, see \cite[(1.9)-(1.14) and Theorem 1.2]{ MR2357678}. Typical results consist of sharp two-sided estimates of the heat kernel for small and/or large times.

Transition semigroups of L\'evy processes allow for a
deeper insight and direct approach from several directions
thanks to their convolutional structure and the available Fourier techniques. For instance,
the upper bounds for
transition densities of isotropic L\'evy densities with relatively fast decay at infinity
are obtained in \cite{MR2827465,MR2886383}
by using Fourier inversion, complex integration, saddle point approximation or the Davies' method.
In this work we study one-dimensional distributions $p_t(dx)$ of
rather general isotropic unimodal L\'evy processes $X=(X_t,\,t\ge 0)$ in $\Rd$.
We focus on pure-jump isotropic unimodal L\'evy processes.
Thus, $X$ is a c\`adl\`ag stochastic process with distribution $\p$, such that $X(0)=0$ almost surely, the increments of $X$ are independent with rotationally invariant and radially nonincreasing density function $p_t(x)$ on $\Rdz$,  and the  following L\'evy-Khintchine formula holds
for $\xi\in\Rd$:
\begin{equation}\label{LKf}
{\E}
e^{i\left<\xi, X_t\right>}=\int_\Rd e^{i\left<\xi,
{x}\right>}p_t(dx)=e^{-t\psi(\xi)},\quad \text{ where }\quad
\psi(\xi)=\int_\Rd \(1- \cos \left<\xi,x\right>\) \nu(dx).
\end{equation}
Here and below $\nu$ is an isotropic unimodal {\it L\'evy measure} and $\E$ is the integration with respect to $\p$.  Further notions and definitions are given in Sections~\ref{sec:prel} and \ref{sec:HeatKernelRd} below.
Put differently, we study
the vaguely continuous spherically isotropic unimodal convolution semigroups $(p_t, t\ge 0)$ of probability measures on $\Rd$ with purely nonlocal generators.
(In
this work we never
use probabilistic techniques beyond the level of one-dimensional distributions of $X$.)

Our main result provides estimates for the tails of $p_t$ and its density function $p_t(x)$,
expressed in terms of the L\'evy-Khintchine exponent $\psi$.
We also
use $\psi$ to
estimate the density function $\nu(x)$ of the
L\'evy
measure $\nu$.
Since $\psi$ is radially {\it almost increasing}, it is {\it comparable}
with its radial nondecreasing majorant $\psi^*$, and as a rule we
employ $\psi^*$ in statements and proofs.
The extensive
useage of
$\psi$ ($\psi^*$) rather than $\nu$
is a characteristic feature of our development and
may be 
considered natural from the point of view of pseudo-differential operators
and spectral theory \cite{MR1873235}.
As usual for Fourier transform, the asymptotics of $\psi$ at infinity translates into the asymptotics of $p_t$ and $\nu$  at the origin. Our estimates may be summarized as follows,
\begin{equation}\label{allcomp}
p_t({x})\approx \[\psi^{-}\(1/t\)\]^{d}\wedge \frac {t\psi^*(|x|^{-1})}{|x|^d}\approx p_t(0)\wedge [t\nu({x})],
\end{equation}
see Theorem~\ref{densityApprox}, Corollary~\ref{cc} and \eqref{apt0} for detailed statements.
Here $\approx$ means that both sides are {\it comparable} i.e. their ratio is bounded between two positive constants,
$\psi$
is assumed to satisfy the so-called {\it weak upper} and {\it lower scalings}
of order strictly between $0$ and $2$,
and $\psi^-$ is the generalized inverse of {$\psi^*$}.
We shall see that \eqref{allcomp}
holds locally in time and space,
or even globally,
if the scalings are global.
We note that the corresponding
 estimates of
$\nu$, to wit,
\begin{align}\label{apnu}
\nu(x)&\approx \frac{\psi^*(|x|^{-1})}{|x|^d},
\end{align}
are simply obtained as a consequence of \eqref{allcomp}
and not as an element of its proof (see Corollary~\ref{cc}).
It is common for $\nu$ to share the asymptotics with $p_t$ because $\nu=\lim_{t\to 0} p_t\tg{/t}$, a vague limit in $\Rdz$.
It is also a manifestation of the general rule mentioned above that
$\psi^*(|x|^{-1})$ in our estimates reflect the properties of
$p_t(x)$ and $\nu(x)$.
The denominator, $|x|^d$, in the estimates comes from the homogeneity of the volume measure in $\Rd$ (see the proof of Corollary~\ref{cc}) and $\psi^-(1/t)$ approximates $p_t(0)$, as follows from a change of variables in Fourier inversion formula (cf. 
Lemma~\ref{inverse} and Lemma~\ref{ltst}).
All these reasons make the above
bounds quite natural. Therefore below we shall address (two-sided and one-sided) estimates similar to \eqref{allcomp} and \eqref{apnu} as {\it common bounds}.
We should
 note that the common upper bounds,
\begin{equation*}\label{onuzg1}
{p_t(x)\leq
C
 \frac {t\psi^*(1/|x|)} {|x|^d}},\quad\qquad\nu(x)\leq C \frac{\psi^*(1/|x|)}{|x|^{d}}, \qquad x\in{\Rdz},\; t>0,
\end{equation*}
hold with a constant depending only on the dimension for {\it all} isotropic unimodal L\'evy semigroups. This is proved in
Corollary~\ref{upper_den}.
The lower common bounds hold if and only if
$\psi$ has (the so-called weak) lower and upper scalings of order strictly between $0$ and $2$, \kb{equivalently if the lower and upper Matuszewska indices are strictly between $0$ and $2$.}
Namely,
in Theorem~\ref{NWSR} we show that for unimodal L\'evy processes,
the scaling of
$\psi$ (at infinity) is
{\it equivalent} to
the
common bounds
for the transition density and the L\'{e}vy measure
(at the origin).
In fact, already the
lower bound $\nu(x)\ge c \psi^*(|x|^{-1})/|x|^d$
implies such scalings of $\psi$.

We thus cover
all the cases of
isotropic L\'evy processes with \kb{scalings of order strictly between $0$ and $2$} studied in literature, and
upper bounds are provided for all  isotropic unimodal L\'evy processes.  
\kb{We leave
open the
problem of estimating
the semigroup of general unimodal L\'evy process with scaling
if  the upper Matuszewska index of $\psi$
is equal to $0$
or the lower Matuszewska index of $\psi$ is equal to $2$.
Here typical examples are the variance gamma process, i.e. the Brownian motion subordinated by an independent gamma subordinator, the geometric stable processes \cite{MR2569321}, and the conjugate geometric stable processes \cite{MR2886459}.
As we see from the results of \cite[Section~5.3.4]{MR2569321} and \cite{MR2886459},}
such processes require specialized approach, and their transition density and L\'evy-Khintchine exponent do not easily explain each other.

Bochner's procedure of subordination is strongly
rooted
in semigroup and operator theory,
harmonic analysis and probability \cite{MR1336382,MR0290095,MR2978140}, and keeps influencing
intense current developments. In the present setting it yields a
wide array of asymptotics of $\psi$, $p_t$ and $\nu$.
In particular,
common bounds
were
recently obtained for a class of subordinate Brownian motions, mainly for the {\it complete} subordinate Brownian motions
defined by a delicate structure condition \cite{MR2978140}. Highly sophisticated current techniques and results in this direction are presented in \cite{MR2986850}, see also \cite{MR2569321,MR2978140}.
Our approach
is, however,
more general
and synthetic; we
demonstrate
that  the
sources
of the asymptotics of (unimodal) $p_t(x)$
are merely
its radial monotonicity and
the scalings
of
$\psi$,
rather than
further
structure properties of $\psi$. \kb{Our arguments are also purely real-analytic.}
We illustrate our results with several classes of relevant examples. These include situations
where former methods cannot be easily applied and the present method works well.
To explain the current advantage,
we note that $\psi$ is an integral quantity and may exhibit less variability than $\nu$.

Many of our examples are subordinate Brownian motions, i.e. we have $\psi(\xi)=\phi(|\xi|^2)$, where $\phi$ is the Laplace exponent of the subordinator, or a Bernstein function.
There is by now
an impressive pool of Bernstein functions studied in the literature, with various asymptotics at infinity. For instance the monograph \cite{MR2978140}
gives well over one hundred cases and classes of
Bernstein functions in its closing list of examples. Many of these functions have the
scaling
properties used
in our paper, which
immediately yields sharp estimates of the L\'evy measure and transition density of subordinate Brownian motions corresponding to such subordinators.
\kb{In particular,
for subordinate Brownian motions with scaling, we
relax the
usual completeness assumption on
the subordinator.}
In comparison, former methods require to first find estimates of the L\'evy measure of the subordinator (this is where the completeness of the subordinator plays a role), then to estimate the L\'evy density of the corresponding subordinate Brownian motion and then, finally, to estimate its semigroup \cite{2013arXiv1303.6449C,MR2986850,MR2569321}.

We remark that analogues of the on-diagonal term $[\psi^-(1/t)]^d\approx p_t(0)$ in \eqref{allcomp}
are often
obtained for more general Markov processes
via Nash and Sobolev inequalities
\cite{MR2299447,MR898496,MR2492992,MR1218884,MR2886383}. For our unimodal L\'evy processes we
instead use Fourier inversion and (weak) lower scaling, see Proposition~\ref{sup_p_t_psi}.
Also our approach to the off-diagonal term ${t\psi^*(|x|^{-1})}/|x|^d$ is very different and much simpler than the arguments leading to the upper bounds in the otherwise more general Davies' method \cite{MR898496}, \cite[Section~3]{MR990239}.
Our common upper bounds are straightforward consequences of a specific quadratic parametrization of the tail function, which is crucial in
applying the techniques of
Laplace transform. The common lower bounds are
harder, and they are intrinsically
related to upper and lower scalings via certain differential inequalities in the proof of Theorem~\ref{NWSR}.

The (local or global) comparability of the common lower and upper bounds is a remarkable feature of
the class of semigroups captured by Theorem~\ref{NWSR}.
We expect
further applications of the estimates.
For instance,
under (weak) global scalings
we obtain important metric-type \cite{MR2925579} global comparisons $p_{2t}(x)\approx p_t(x)$ and $p_t(2x)\approx p_t(x)$, given in Corollary~\ref{Doubling} below. These should matter in perturbation theory of L\'evy generators and in nonlinear partial integro-differential equations. Since uniform
estimates are important in some applications, 
the {\it comparability constants} in the paper are generally shown to depend in a rather explicit way on specific properties of the semigroups, chiefly on scaling. 
Noteworthy, our (weak) scaling conditions
imply
majorization and minorization of $\psi$ at infinity by power functions with exponents strictly between $0$ and $2$, but do not require its {\it comparability}
with a power function (see examples in Section~\ref{sex}). Furthermore, the exponents $\la$ and $\ua$ in the assumed scalings
only affect the comparison constants in the common bounds, but not the rate of asymptotic, which is solely determined by $\psi$.

For convolution semigroups of probability measures more general than unimodal, the structure of the support and the spherical regularity of the L\'evy measure
plays a crucial role.
In particular the directions which are not charged by the L\'evy measure see in general lighter asymptotics of $p_t(dx)$ \cite{MR1085177, MR2286060}.
In consequence, the estimates of severely anisotropic convolution semigroups require completely different assumptions, description and methods. Our experience indicates that $\nu$ surpasses $\psi$ in such cases. Estimates and references to anisotropic
$\nu$ with prescribed radial decay and rough spherical marginals may be found in
\cite{MR2794975} (see also \cite{MR2320691, MR2286060} for more details in the case of {\it homogeneous} anisotropic $\nu$).

The structure of the paper is as follows. In Section~\ref{sec:prel} we discuss first consequences of isotropy and radial monotonicity. In particular we compare $\psi$ with Pruitt-type function $h_1(r)=\int_{\Rd}\(1\wedge |x_1|^2/r^2\)\nu(dx)$ and we estimate from above
the tail function ${f}_t(\rho)=\p(|X_t|\ge \sqrt{\rho})$ by
using Laplace transform and $\psi$. This and radial monotonicity quickly lead to upper bounds for $p_t(x)$ and $\nu(x)$.
In Section~\ref{sec:HeatKernelRd} we discuss almost monotone and general weakly scaling functions.
In Section~\ref{sec:SLKe} we specialize to scalings
with lower and upper exponents $\la,\ua\in (0,2)$,
and we give examples of $\psi$ with such scaling. \kb{We also explain the relationship to Matuszewska indices.} To obtain lower bounds for $p_t(x)$ and $\nu(x)$,
in Lemma~\ref{LaplaceLower} we recall an observation due to M. Z\"ahle, which is then used in Lemma~\ref{tail} to reverse the comparison between
$\psi$
 and the tail function $f_t$.
The generalized inverse $\psi^-$ plays a role through a change of variables in Fourier inversion formula for $p_t(0)$ in Lemma~\ref{inverse} and through
equivalence relation defining "small times" (stated as Lemma~\ref{ltst}). In Theorem~\ref{densityApprox} we combine all the threads to estimate $p_t$, as summarized in \eqref{allcomp}. In Corollary~\ref{cc} we obtain \eqref{apnu}
as a simple consequence of Theorem~\ref{densityApprox}. To close the circle of ideas, in Theorem~\ref{NWSR} we show the equivalence of the weak scalings with the
common bounds for
$p_t$ and $\nu$.
In Proposition~\ref{tau} we state a
connection between $\nu$ and $\psi$ for a class of {\it approximately} isotropic L\'evy densities.

\section{Unimodality}\label{sec:prel}
We
shall often use the
gamma and incomplete gamma functions:
$$\Gamma(\delta)=\int^\infty_0 e^{-u}u^{\delta-1}du, \qquad \gamma(\delta,t)=\int^t_0e^{-u}u^{\delta-1}du,\qquad \Gamma(\delta,t)=\int^\infty_te^{-u}u^{\delta-1}du,\qquad \delta,\,t>0.$$
Let $\R^d$ be the Euclidean space of (arbitrary) dimension $d\in \N$.
For $x\in \Rd$ and $r>0$ we let $B(x,r)=\{y\in \Rd: |x-y|<r\}$,
and
$B_r=B(0,r)$. We denote by 
\mr{$\omega_d=2\pi^{d/2}/\Gamma(d/2)$} 
the surface measure of the unit sphere in $\Rd$.
All sets, functions and measures considered below are
(assumed)
Borel.
A
(Borel) measure
on $\Rd$ is called isotropic unimodal (in short, unimodal) if
on $\Rd\setminus \{0\}$ it is absolutely continuous with respect to the Lebesgue measure,
and has a finite {\it radial nonincreasing} density function. Such measures may have
an atom at the origin: they are of the form $
a\delta_0(dx)+f(x)dx$, where $a\ge 0$, $\delta_0$ is the Dirac measure,
$$
f(x)=\int_0^\infty {\bf 1}_{B_r}(x)\mu(dr)=\mu\big((|x|,\infty)\big) \qquad (a.e.),
$$
\mr{and $\mu$ is a 
measure on $\kb{(}0, \infty)$} such that  $\mu\big((\varepsilon,\infty)\big)<\infty$ for all $\varepsilon>0$.
A L\'evy process $X=(X_t, \,t\ge 0)$, is called isotropic unimodal (in short, unimodal) if all of its one-dimensional distributions (transition densities) $p_t(dx)$ are such.
Recall that L\'evy measure is
any
measure
concentrated on $\Rdz$ such that
\begin{equation}\label{wml}
\int_\Rd \(|x|^2\wedge 1\)\nu(dx)<\infty.
\end{equation}
Unimodal pure-jump L\'evy processes are characterized in \cite{MR705619}
by
unimodal L\'evy measures
$\nu(dx)=\nu(x)dx=\nu(|x|)dx$.

Unless explicitly stated otherwise, in what follows we
assume that
$X$ is a pure-jump
{unimodal} L\'{e}vy process in $\Rd$  with (unimodal) {\it nonzero} L\'evy measure (density) $\nu$.

Each measure $p_t$ is the weak limit of
$$p_t^{\varepsilon}=e^{-t\nu_\epsilon(\Rd)}\sum_{n=0}^\infty \frac{1}{n!}\nu_\epsilon^{{*}n},$$
where $\epsilon \to 0^+$ and $\nu_\epsilon^{{*}n}$ are convolution powers of measures $\nu_\epsilon(A)=\nu(A\setminus B_\epsilon)$,
see, e.g., \cite[Section 1.1.2]{MR2569321}.
Each $p_t$
has a radial nonincreasing density function $p_t(x)$ on $\Rdz$
and atom
$\exp[-t\nu(
\Rd)]$ at the origin if $\nu(
\Rd)<\infty$ (no atom if $t\nu$ is infinite).

For $r>0$ we define after W.~Pruitt \cite{MR632968},
\begin{equation}\label{def:GKh}
h(r)=\int\limits_{\Rd}\(\frac{|{x}|^2}{r^2}\wedge 1\)\tg{\nu(dx)},\qquad\qquad L(r)= \nu\(B^c_r\).\qquad
\end{equation}
Clearly, $0\leq L(r)< h(r)<\infty$,
$L$ is nonincreasing and $h$ is  decreasing. The strict monotonicity and positivity of $h$ follows since $\nu\neq 0$ is nonincreasing, hence positive near the origin.

The
first coordinate process
$X_t^{1}$ of
$X_t$
is
unimodal in $\R$. The corresponding quantities $L_1(r)$ and $h_1(r)$ are given by
the (pushforward) L\'evy  measure $\nu_1=\nu\circ {\pi}_1^{-1}$,
where $\pi_1$ is the projection: ${\Rd\ni} x=(x_1,\ldots,x_d)\mapsto x_1$, see  \cite[Proposition~11.10]{MR1739520}.
With a typical abuse of notation we let $\nu_1(y)$ denote
the (symmetric and nonincreasing on $(0,\infty)$) density function of $\nu_1$:
$$
\nu_1(y)=\int_{\R^{d-1}}\nu(\sqrt{y^2+|z|^2})dz,\qquad y\in \R\setminus\{0\}.
$$
Thus,
$$h_1(r)=\int_\R \(\frac{
y^2}{r^2}\wedge 1\)\nu_1(dy)=
\int\limits_{\Rd}\(\frac{|
{x}_1|^2}{r^2}\wedge 1\)\nu(d
x),\quad r>0.
$$
Therefore,
\begin{equation}\label{coh}
h_1(r)\le h(r)\le h_1(r)d,\quad r>0.
\end{equation}
In fact, \eqref{coh} is valid  for
all rotation invariant L\'evy measures.
\begin{remark}\label{sch}
\mr{The functions} $h(r)$ and $h_1(r)$ are decreasing, while $r^2h(r)$ and $r^2 h_1(r)$ are nondecreasing.
\end{remark}

Since
$\psi$ is a radial function,  we shall often
write
$\psi(u)=\psi(
\xi)$, where $\xi\in \Rd$ and $u=|\xi|\ge 0$.
We obtain the same function for
$X_t^1$.
Clearly, $\psi(0)=0$ and, as before for $h$, $\psi(u)>0$ for $u>0$.

We now show how to use
$h_1$ to estimate the L\'evy-Khintchine exponent $\psi$ of $X$.
\begin{lemma}\label{l:hop}
\begin{equation}\label{eqckh1}
\frac{2}{\pi^2}h_1\(\frac1u\)\le \psi(u)\le 2 h_1\(\frac1u\),\quad u>0.
\end{equation}
\end{lemma}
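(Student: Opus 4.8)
The plan is to reduce everything to one dimension and then to compare the two integrands against the common measure $\nu_1$. Since $\psi$ is radial, taking $\xi=u e_1$ in \eqref{LKf} gives $\psi(u)=\int_\R(1-\cos(uy))\,\nu_1(dy)$, with $\nu_1$ the first-coordinate projection described above, and likewise $h_1(1/u)=\int_\R(1\wedge u^2y^2)\,\nu_1(dy)$. Thus both quantities are integrals of nonnegative even functions of $y$ against the \emph{same} symmetric measure $\nu_1$, which moreover, by unimodality of $\nu$, has a nonincreasing density $\nu_1(y)$ on $(0,\infty)$ tending to $0$ at infinity; equivalently $\nu_1(y)=\eta\big((y,\infty)\big)$ for a nonnegative measure $\eta$ on $(0,\infty)$ (morally $\eta=-\nu_1'$).

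For the upper bound I would use the elementary inequality $1-\cos s\le 2\,(1\wedge s^2)$, valid for all $s\in\R$ (because $1-\cos s\le s^2/2\le 2s^2$ when $|s|\le1$, and $1-\cos s\le2$ when $|s|\ge1$); inserting $s=uy$ and integrating against $\nu_1$ over $\R$ yields at once $\psi(u)\le 2h_1(1/u)$.

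The lower bound is where the real work lies, and the obstacle is the oscillation of $1-\cos$: there is no pointwise bound $1-\cos s\ge c\,(1\wedge s^2)$, and indeed the lower estimate fails outright for non-unimodal $\nu$, so one must exploit the monotonicity of $\nu_1(\cdot)$ through an integral (``layer-cake'') argument. Set $g_u(y)=(1-\cos uy)-\tfrac{2}{\pi^2}(1\wedge u^2y^2)$; it suffices to show $\int_\R g_u\,d\nu_1\ge0$. Writing $\nu_1(y)=\eta((y,\infty))$ and applying Fubini---legitimate since $\int_{(0,\infty)}\big(\int_0^t|g_u(y)|\,dy\big)\eta(dt)=\int_\R|g_u|\,d\nu_1\le 3h_1(1/u)<\infty$, using $|g_u(y)|\le 3(1\wedge u^2y^2)$---rewrites $\int_\R g_u\,d\nu_1=2\int_{(0,\infty)}G_u(t)\,\eta(dt)$, where $G_u(t)=\int_0^t g_u(y)\,dy$. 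Since $\eta\ge0$, the claim reduces to $G_u(t)\ge0$ for all $t>0$. For $t\le1/u$ (so $ut\le1\le\pi$) this follows by integrating the Jordan-type inequality $1-\cos x\ge\tfrac{2x^2}{\pi^2}$ (valid on $[0,\pi]$) over $[0,t]$, which gives exactly $u\,G_u(t)=ut-\sin(ut)-\tfrac{2}{3\pi^2}(ut)^3\ge0$. For $t\ge1/u$ one has $1\wedge u^2y^2=1$ on $(1/u,t)$, and a direct computation gives $u\,G_u(t)=\tfrac{4}{3\pi^2}+ut\big(1-\tfrac{2}{\pi^2}\big)-\sin(ut)$, which is nonnegative for $ut\ge1$: the affine term already exceeds $1\ge\sin(ut)$ once $ut$ is slightly above $1$, and the short remaining range $ut\in[1,\pi/2]$ is dispatched by a Taylor estimate for $\sin$. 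Combining, $\psi(u)\ge\tfrac{2}{\pi^2}h_1(1/u)$.

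The main obstacle is therefore the lower bound, specifically the oscillation of $1-\cos$; all the quantitative content is the verification that $G_u\ge0$, whose only structural input is that $\nu_1(\cdot)$ is nonincreasing, i.e. that $\nu$ is unimodal. The only routine-but-delicate steps are the Fubini bookkeeping and the sign check of $G_u$ on the short interval near $ut=1$.
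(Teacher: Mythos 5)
Your proof is correct and follows essentially the same route as the paper: the reduction to $\nu_1$, the layer-cake representation of the nonincreasing density (your $\eta$ is the paper's auxiliary measure $\mu$), Fubini, and the reduction to nonnegativity of the primitive of $(1-\cos s)-\tfrac{2}{\pi^2}(s^2\wedge 1)$, which is exactly the paper's inequality \eqref{eqoka} in the rescaled variable $s=uy$. The only differences are cosmetic: you split at $ut=1$ rather than at $\pi$ and check the sign directly there, while the paper compares $\int_{\pi/2}^t(1-\cos r)\,dr$ with $t-\pi/2$; both are routine calculus.
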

\begin{proof}
For $t\ge 0$ we define $\kappa(t)=\int_0^t(1-\cos r)dr$, and we claim that
\begin{equation}\label{eqoka}
\frac{2}{\pi^2}\int_0^t (r^2\wedge 1) dr\le \kappa(t) \le 2\int_0^t (r^2 \wedge 1) dr,\quad t\ge 0.
\end{equation}
Indeed, $1-\cos r=2\sin^2 (r/2)\le
2(r^2\wedge 1)$, which gives the upper bound. If $0\le r\le t\le \pi$, then
$1-\cos r=2\sin^2 (r/2)
\ge 2r^2/\pi^2$ hence, $\kappa(t)\ge 2\pi^{-2}\int_0^t (r^2\wedge 1) dr$, and if $t>\pi$, then
\begin{align*}
\int_{\pi/2}^t (1-\cos r)dr&=(t-\pi/2)+(1-\sin t)\ge
\int_{\pi/2}^t dr,
\end{align*}
which yields \eqref{eqoka}. (The constants in \eqref{eqoka} may be improved.)
We define an auxiliary measure $\mu$ on $(0,\infty)$ by letting
$$\mu((y,\infty))=\nu_1(y)\quad  \mbox{for a.e. $y>0$.}$$
Let $u
>0$.
By a change of variables and Fubini-Tonelli,
\begin{align*}
\frac12h_1\(\frac1u\)&=\int_0^\infty [(u^2y^2)\wedge 1]\nu_1(y)dy
=\int_0^\infty [(u^2y^2)\wedge 1]\int_{(y,\infty)}\mu(dt)dy\\
&=\int_0^\infty \int_0^t[(u^2y^2)\wedge 1]dy\,\mu(dt)=
\int_0^\infty \frac1u\int_0^{tu}[r^2\wedge 1]dr\,\mu(dt).
\end{align*}
Similarly,
\begin{align*}
{\frac12}\psi(u)&=
\int_0^\infty \[1-\cos (uy)\]\nu_1(y)dy=
\int_0^\infty \[1-\cos (uy)\]\int_{(y,\infty)}\mu(dt)dy=
\int_0^\infty
\frac1u\kappa(tu)\mu(dt).
\end{align*}
By these identities and \eqref{eqoka} we obtain \eqref{eqckh1}.
\end{proof}

We define the maximal characteristic function $\psi^*(u):= {\sup_{s\leq {u}}\psi(s)}$, where $u\ge 0$.
The following result is a version of \cite[Proposition 1]{Grzywny}.
\begin{proposition} \label{Vestimate}
\begin{equation}\label{eqcpg}
\psi(u)\le \psi^*(u)\leq \pi^2\, \psi(u) \quad \text{ for } \quad u\ge0.
\end{equation}
\end{proposition}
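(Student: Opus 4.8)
The left-hand inequality in \eqref{eqcpg} is immediate, since $\psi(u)$ is among the values over which the supremum defining $\psi^*(u)$ is taken. The content is therefore the right-hand inequality, which amounts to saying that $\psi$ is almost increasing with constant $\pi^2$; equivalently, $\psi(s)\le \pi^2\psi(u)$ whenever $0\le s\le u$. Once this pointwise comparison is in hand, taking the supremum over all $s\le u$ yields $\psi^*(u)\le \pi^2\psi(u)$.

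To establish the pointwise comparison the plan is to route everything through Lemma~\ref{l:hop} together with the monotonicity of $h_1$ recorded in Remark~\ref{sch}. Fix $0<s\le u$; the case $s=0$ (which forces $u=0$ or is otherwise trivial) needs nothing since $\psi(0)=0$. By the upper estimate in \eqref{eqckh1} applied at $s$ we have $\psi(s)\le 2h_1(1/s)$, and by the lower estimate in \eqref{eqckh1} applied at $u$ we have $h_1(1/u)\le \tfrac{\pi^2}{2}\psi(u)$. Since $h_1$ is nonincreasing and $1/s\ge 1/u$, we also have $h_1(1/s)\le h_1(1/u)$. Chaining these three inequalities gives
\begin{equation*}
\psi(s)\le 2h_1(1/s)\le 2h_1(1/u)\le \pi^2\psi(u),
\end{equation*}
which is exactly what is needed.

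There is essentially no hidden difficulty here: the estimate is a purely formal consequence of the two-sided comparison between $\psi$ and the genuinely (not merely almost) monotone function $h_1$ supplied by Lemma~\ref{l:hop}. The one thing to keep track of is the bookkeeping of constants: the final factor $\pi^2$ is precisely the product $2\cdot(\pi^2/2)$ of the two constants appearing in \eqref{eqckh1}, so any sharpening of those constants (as noted after \eqref{eqoka}) would carry over to an improved constant in \eqref{eqcpg}.
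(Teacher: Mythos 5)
Your proof is correct and follows essentially the same route as the paper: both bound $\psi$ by $2h_1(1/\cdot)$ via Lemma~\ref{l:hop}, use the monotonicity of $h_1$ to control the supremum, and return to $\psi(u)$ via the lower estimate in \eqref{eqckh1}, yielding the same factor $\pi^2 = 2\cdot(\pi^2/2)$.
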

\begin{proof}
Since $h_1$ is nonincreasing, by Lemma~\ref{l:hop} for $u\ge0$ we have
\begin{align*}
\psi(u)\le \psi^*(u)\le 2\sup_{0<s\le u}h_1\(\frac1s\)=2h_1\(\frac1u\)\le\pi^2\psi(u).
\end{align*}
\end{proof}
We write $f(x)\approx g(x)$ and say $f$ and $g$ are comparable if $f, g\ge 0$ and there is a positive number $C$, called {\it comparability constant},
such that $C^{-1}f(x)\le g(x)\le C f(x)$ for all $x$.
We write $C=C(a,\ldots,z)$ to indicate that $C$
may be so chosen to depend only on $a,\ldots,z$.
We say the comparison is absolute if the constant is absolute.
Noteworthy,
while $\psi$ is comparable to a nondecreasing function, it need not be nondecreasing itself.
For instance, if $\psi(
{u}) = u + 3\pi
[1-(\sin u)/u]$, then $
{\psi}'(2\pi)=-{\frac12}
<0$.

The following conclusion
may be interpreted as
relation of ``scale'' and ``frequency''.
\begin{corollary}\label{ch1Vpc}
We have ${h(r)\approx}h_1(r)
\approx
\psi(1/r)\approx
\psi^*(1/r)$ for
$r>0$.
\end{corollary}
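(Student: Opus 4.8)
The plan is to read off Corollary~\ref{ch1Vpc} directly from the three comparisons already in hand, after the substitution $u=1/r$. First I would invoke \eqref{coh}, which states $h_1(r)\le h(r)\le d\,h_1(r)$ for every $r>0$; this is exactly $h(r)\approx h_1(r)$ with comparability constant $d$. Next, putting $u=1/r$ in Lemma~\ref{l:hop}, inequality \eqref{eqckh1} becomes $\frac{2}{\pi^2}h_1(r)\le\psi(1/r)\le 2h_1(r)$, so $\psi(1/r)\approx h_1(r)$ with an absolute comparability constant. Finally, putting $u=1/r$ in Proposition~\ref{Vestimate}, inequality \eqref{eqcpg} gives $\psi(1/r)\le\psi^*(1/r)\le\pi^2\psi(1/r)$, i.e.\ $\psi^*(1/r)\approx\psi(1/r)$, again absolutely.

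Chaining the last two relations by transitivity of $\approx$ yields $h_1(r)\approx\psi(1/r)\approx\psi^*(1/r)$ with absolute constants, and combining with the first relation gives $h(r)\approx h_1(r)$ with constant $d$; hence all four quantities are pairwise comparable for $r>0$, which is the assertion. The only substantive remark I would make along the way is the bookkeeping of constants: the three comparisons not involving $h$ are absolute, whereas $h\approx h_1$ costs a factor depending only on the dimension $d$, via the elementary bound \eqref{coh}.

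I do not anticipate any real obstacle here — the corollary is a packaging of Lemma~\ref{l:hop}, Proposition~\ref{Vestimate} and \eqref{coh}, and no new estimate is needed. If anything, the ``hard part'' already happened in Lemma~\ref{l:hop}, where the sandwich \eqref{eqoka} for $\kappa$ and the Fubini--Tonelli rewriting of both $h_1(1/u)$ and $\psi(u)$ as integrals against the auxiliary measure $\mu$ had to be set up; once that is available, the present statement is immediate.
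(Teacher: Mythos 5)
Your proposal is correct and matches the paper's own proof, which likewise cites \eqref{coh} for $h\approx h_1$ (constant depending only on $d$) and Lemma~\ref{l:hop} together with Proposition~\ref{Vestimate} for the absolute comparisons among $h_1(r)$, $\psi(1/r)$ and $\psi^*(1/r)$. You simply spell out the substitution $u=1/r$ and the constant bookkeeping that the paper leaves implicit.
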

\begin{proof}
The constant in the
{leftmost} comparison depends only on the dimension,
see \eqref{coh}.
The other comparisons are absolute,
by Lemma~\ref{l:hop} and Proposition~\ref{Vestimate}.
\end{proof}
By Corollary \ref{ch1Vpc} and definitions of $L_1$, $L$ and $h$, we obtain the following inequality:
\begin{equation}\label{tails}
L_1(r)\le
L(r)
< h(r)\le
{C}\psi^*(1/r), \qquad r>0,
\end{equation}
\kb{where $C=C(d)$.}
Our main goal is to
describe asymptotics of $\nu(x)$ and $p_t(x)$ in terms of $\psi^*$.
We start with the
analysis of the Laplace transform of the integral tails of $p_t$.
For reasons which shall become clear in the proof of the next result, we choose the following parametrization of the tails:
\begin{align}
{f}_t(\rho)&=\p(|X_t|\ge \sqrt{\rho})=\p(|X_t|^2>\rho)\label{pate}
,\qquad\rho\ge 0, \quad t>0.
\end{align}
Consider the Laplace transform of $f_t$:
$${\cal L}{{f}_t}(\lambda)=\int_0^\infty e^{-\lambda\rho}f_t(\rho)d\rho,\qquad \lambda\ge 0.$$

\begin{lemma}\label{laplace}
There is a constant $C_1=C_1(d)$ such that
$$C_1^{-1}   \frac 1\lambda\(1-e^{-t\psi^*(\sqrt{\lambda})}\) \le {\cal L}{{f}_t}(\lambda)\le C_1\frac 1\lambda \(1-e^{-t\psi^*(\sqrt{\lambda})}\) , \qquad \lambda >0.$$
\end{lemma}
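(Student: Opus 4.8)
The plan is to represent $\mathcal L f_t(\lambda)$ by Fourier inversion against a Gaussian weight; this is exactly why we parametrized the tails by $f_t(\rho)=\p(|X_t|^2>\rho)$, since $x\mapsto e^{-\lambda|x|^2}$ is, up to a constant, the Fourier transform of a Gaussian, so integrating $e^{-\lambda|X_t|^2}$ against the law of $X_t$ reproduces $\psi$. First, by Tonelli's theorem,
$$\mathcal L f_t(\lambda)=\int_0^\infty e^{-\lambda\rho}\!\int_{\Rd}\mathbf{1}_{\{|x|^2>\rho\}}\,p_t(dx)\,d\rho=\int_{\Rd}\frac{1-e^{-\lambda|x|^2}}{\lambda}\,p_t(dx)=\frac1\lambda\,\E\big(1-e^{-\lambda|X_t|^2}\big),\qquad\lambda>0.$$
Then, using the pointwise identity $e^{-\lambda|x|^2}=(4\pi\lambda)^{-d/2}\int_{\Rd}e^{-|y|^2/(4\lambda)}e^{i\langle x,y\rangle}\,dy$, Fubini's theorem (justified because the modulus of the integrand integrates, against $dy\otimes p_t(dx)$, to $(4\pi\lambda)^{d/2}<\infty$) and \eqref{LKf}, one gets $\E e^{-\lambda|X_t|^2}=(4\pi\lambda)^{-d/2}\int_{\Rd}e^{-|y|^2/(4\lambda)}e^{-t\psi(y)}\,dy$; subtracting this from $1=(4\pi\lambda)^{-d/2}\int_{\Rd}e^{-|y|^2/(4\lambda)}\,dy$, substituting $y=\sqrt\lambda\,w$, and recalling that $\psi$ is radial, I obtain
$$\lambda\,\mathcal L f_t(\lambda)=\E\big(1-e^{-\lambda|X_t|^2}\big)=(4\pi)^{-d/2}\int_{\Rd}e^{-|w|^2/4}\big(1-e^{-t\psi(\sqrt\lambda\,|w|)}\big)\,dw .$$

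It remains to compare the last integral with $1-e^{-t\psi^*(\sqrt\lambda)}$ up to constants depending only on $d$. The main input is a quadratic growth bound for $\psi^*$: there is an absolute $A\ge1$ with $\psi^*(su)\le A s^2\psi^*(u)$ for all $s\ge1$, $u>0$. This follows from Corollary~\ref{ch1Vpc} and Remark~\ref{sch}, because $\psi^*(1/r)\approx h_1(r)$ with an absolute constant while $r^2 h_1(r)$ is nondecreasing, so $u\mapsto\psi^*(u)/u^2$ is comparable (absolutely) to a nonincreasing function. For the upper bound I use $\psi\le\psi^*$ and split the $w$-integral at $|w|=1$: for $|w|\le1$, monotonicity of $\psi^*$ gives $1-e^{-t\psi^*(\sqrt\lambda|w|)}\le1-e^{-t\psi^*(\sqrt\lambda)}$, while for $|w|>1$ the growth bound together with the elementary inequality $1-e^{-cs}\le c(1-e^{-s})$ (for $c\ge1$, $s\ge0$) gives $1-e^{-t\psi^*(\sqrt\lambda|w|)}\le A|w|^2(1-e^{-t\psi^*(\sqrt\lambda)})$; since $(4\pi)^{-d/2}\int_{\Rd}e^{-|w|^2/4}\,dw=1$ and $(4\pi)^{-d/2}\int_{\Rd}|w|^2 e^{-|w|^2/4}\,dw=2d$, this yields the upper estimate with $C_1$ at most $1+2dA$. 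For the lower bound I keep only the region $|w|\ge1$: there $\psi(\sqrt\lambda|w|)\ge\pi^{-2}\psi^*(\sqrt\lambda|w|)\ge\pi^{-2}\psi^*(\sqrt\lambda)$ by Proposition~\ref{Vestimate} and monotonicity of $\psi^*$, and concavity of $s\mapsto1-e^{-s}$ (vanishing at $0$) gives $1-e^{-t\psi(\sqrt\lambda|w|)}\ge\pi^{-2}(1-e^{-t\psi^*(\sqrt\lambda)})$; since $c_d:=(4\pi)^{-d/2}\int_{|w|\ge1}e^{-|w|^2/4}\,dw\in(0,1)$, the lower estimate follows with $C_1$ at most $\pi^2/c_d$.

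The only non-routine ingredient is the quadratic growth bound $\psi^*(su)\le A s^2\psi^*(u)$ and its use to dominate the Gaussian tail in $|w|$ against the growth of $\psi(\sqrt\lambda\,|w|)$; I expect this to be the crux. Everything else — the two elementary inequalities for $1-e^{-s}$, the Gaussian moment computations, and the Fubini and change-of-variables bookkeeping — is routine, and it makes all constants manifestly of the form $C_1=C_1(d)$. Note that no scaling hypothesis on $\psi$ enters here, so this lemma holds for every isotropic unimodal L\'evy process; it underlies both the general upper bounds (Corollary~\ref{upper_den}) and, once weak scalings are imposed, the matching lower bounds.
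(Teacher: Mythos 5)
Your proof is correct and follows essentially the same route as the paper: the same Gaussian--Fourier identity $\lambda\,\mathcal{L}f_t(\lambda)=(4\pi)^{-d/2}\int_{\Rd} e^{-|w|^2/4}\bigl(1-e^{-t\psi(\sqrt{\lambda}\,|w|)}\bigr)\,dw$, the same elementary inequality $1-e^{-bs}\le b(1-e^{-s})$ for $b\ge 1$, and the same restriction to $|w|\ge 1$ combined with Proposition~\ref{Vestimate} for the lower bound. The only cosmetic difference is that you derive the quadratic growth bound $\psi^*(su)\le \pi^2 s^2\psi^*(u)$ for $s\ge 1$ from Corollary~\ref{ch1Vpc} and Remark~\ref{sch}, whereas the paper invokes Hoh's estimate \eqref{Psi*ScalingGeneral}; both yield a constant $C_1=C_1(d)$.
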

\begin{proof}
By Fubini's theorem, $\int_\Rd \hat{h}(x)k(x)dx=\int_\Rd h(x)\hat{k}(x)dx$ for integrable functions $h$, $k$. By this,
\eqref{pate} and change of variables we obtain
\begin{align*}
\lambda{\cal L}f_t(\lambda)&=\E (1-e^{-\lambda|X_t|^2})\\
&=1-\int_\Rd e^{-\lambda|x|^2}p_t(x)dx
=(4\pi)^{-d/2}\int_{\Rd} \(1- e^{-t\psi\( x \sqrt{\lambda}   \)}\)e^{-|x|^2/4}dx.
\end{align*}
By \cite[Theorem~2.7]{1998-WHoh-habilitation},
\begin{equation}\label{Psi*ScalingGeneral}
\psi(s
{u})\leq\psi^*(s
u)\le 2
(s^2+1)\psi^*(
u
),
\qquad
s,u
\ge 0.
\end{equation}
(The estimate may usually be improved for specific $\psi$.)
We also note that
\begin{equation}\label{nfw}
1-e^{-bt}\le b(1-e^{-t}),\qquad
t\ge 0, \quad
b\ge 1,
\end{equation}
and we obtain
\begin{align}
\lambda{\cal L}f_t(\lambda)&\le  (4\pi)^{-d/2}\int_{\Rd} \(1- e^{-2t (|x|^2+1)\psi^*(\sqrt{\lambda} )}\)e^{-|x|^2/4}dx\nonumber\\
&\le 2 (2d+1)\(1- {e^{-t\psi^*(\sqrt{\lambda} )}}\) .\label{ubLt}
\end{align}
On the other hand, if
$|x|\geq1$, then
$\psi\(
 x
\sqrt{\lambda}\)\ge
\psi^*\(|x|\sqrt{\lambda}\){/\pi^2}
\ge
\psi^*\(\sqrt{\lambda}\)/\pi^2
$
by \eqref{eqcpg}.
Thus,
$$\lambda{\cal L}f_t(\lambda)\geq (4\pi)^{-d/2}\int_{B^c_1} e^{-|x|^2/4}dx\(1- {e^{-t\psi^*\(\sqrt{\lambda} \)/\pi^2}}\)\ge
\frac{\Gamma(d/2,1/4)}{\pi^2\Gamma(d/2)}
\(1- {e^{-t\psi^*\(\sqrt{\lambda} \)}}\),$$
where we use \eqref{nfw} and the upper incomplete gamma function $\Gamma(\cdot,\cdot)$.
\end{proof}
The upper bounds for tails
shall follow from this
auxiliary lemma.
\begin{lemma}\label{LaplaceUpper} If $f$ is nonnegative and nonincreasing, then for $n,m=0,1,2,\ldots$ and $
{r}>0$,
\begin{equation*}f(r)\leq \frac1{\gamma(n+m+1,1)} r^{-n-m-1}|(\mathcal{L}[s^mf])^{(n)}(r^{-1})|.
\end{equation*}
\end{lemma}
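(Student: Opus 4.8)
The plan is to differentiate the Laplace transform $\mathcal{L}[s^m f]$ under the integral sign $n$ times, obtaining an explicit integral formula for $(\mathcal{L}[s^m f])^{(n)}$, then to bound that integral from below by restricting the domain of integration to $(0,r)$ and exploiting the monotonicity of $f$, and finally to identify the remaining integral with a lower incomplete gamma value via a linear change of variables.

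First I would write, for $\lambda>0$,
$$\mathcal{L}[s^m f](\lambda)=\int_0^\infty e^{-\lambda\rho}\rho^m f(\rho)\,d\rho,$$
and note that, $f$ being nonincreasing (hence bounded on $[\delta,\infty)$ for each $\delta>0$) and the exponential providing decay, one may differentiate $n$ times under the integral sign to get
$$(\mathcal{L}[s^m f])^{(n)}(\lambda)=(-1)^n\int_0^\infty e^{-\lambda\rho}\rho^{n+m}f(\rho)\,d\rho .$$
Since $f\ge 0$, this gives $|(\mathcal{L}[s^m f])^{(n)}(\lambda)|=\int_0^\infty e^{-\lambda\rho}\rho^{n+m}f(\rho)\,d\rho$. (If this quantity is infinite at $\lambda=r^{-1}$ the claimed inequality is trivial, so we may assume it finite.)

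Next, evaluating at $\lambda=r^{-1}$, discarding the tail $\int_r^\infty$, and using $f(\rho)\ge f(r)$ for $0<\rho\le r$, I would obtain
$$|(\mathcal{L}[s^m f])^{(n)}(r^{-1})|\ \ge\ f(r)\int_0^r e^{-\rho/r}\rho^{n+m}\,d\rho .$$
The substitution $\rho=ru$ turns the last integral into $r^{n+m+1}\int_0^1 e^{-u}u^{n+m}\,du=r^{n+m+1}\gamma(n+m+1,1)$, and dividing through by $r^{n+m+1}\gamma(n+m+1,1)$ yields the stated bound.

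I do not anticipate a genuine obstacle; the only matters worth a sentence are the legitimacy of differentiating under the integral sign and the degenerate case in which the right-hand side is infinite. Conceptually the proof is just the observation, central to the Laplace-transform technique used later in the paper, that replacing the full Laplace integral by its restriction to $(0,r)$ lets one pull the value $f(r)$ out of the integral, after which the incomplete gamma factor appears automatically from the change of variables $\rho=ru$.
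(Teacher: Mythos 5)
Your proposal is correct and is essentially the paper's own proof: both identify $(\mathcal{L}[s^m f])^{(n)}(r^{-1})$ with $(-1)^n\mathcal{L}[s^{n+m}f](r^{-1})$, restrict the integral to $(0,r)$, pull out $f(r)$ by monotonicity, and recognize the remaining integral as $r^{n+m+1}\gamma(n+m+1,1)$ after the substitution $\rho=ru$. Your added remarks on differentiating under the integral sign and on the trivially true infinite case are fine but not needed for the argument.
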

\begin{proof}If $
{u}>0$ and $r=u^{-1}$, then
\begin{align*}
u^{n+m+1}|(\mathcal{L}[s^mf])^{(n)}(u)|&=u^{n+m+1}|(-1)^n\mathcal{L}[s^{n+m}f](u)|\\&
\ge u^{n+m+1}\int^{u^{-1}}_0e^{-su}s^{n+m}f(s)ds
\geq f(u^{-1})\int^{u^{-1}}_0 u(us)^{n+m}e^{-su}ds\\
&=f(u^{-1})\int^{1}_0u^{n+m}e^{-u}du=f(u^{-1})\gamma(n+m+1,1),
\end{align*}
where we use the lower incomplete gamma function $\gamma(\cdot,\cdot)$.
\end{proof}
The following estimate
results from \eqref{ubLt} and  Lemma \ref{LaplaceUpper} with $n=m=0$.
\begin{corollary}\label{ubt}
For $
r>0$ we have
$ \p(|X_t|\ge r)\le
\frac{2e}{e-1}(2d+1)
\(1-e^{-t\psi^*(1/r )}\)$.
\end{corollary}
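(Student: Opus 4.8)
The plan is to combine Lemma~\ref{LaplaceUpper}, specialized to $n=m=0$, with the Laplace-transform estimate \eqref{ubLt} obtained inside the proof of Lemma~\ref{laplace}. First I would record that the function $f_t$ from \eqref{pate} is nonnegative and nonincreasing in $\rho$: indeed $\rho\mapsto\sqrt\rho$ is increasing, so $\{|X_t|\ge\sqrt\rho\}$ is a decreasing family of events; moreover $0\le f_t\le 1$, and by Lemma~\ref{laplace} the transform ${\cal L}f_t(\lambda)$ is finite for every $\lambda>0$, so all the manipulations below are legitimate.

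Next I would apply Lemma~\ref{LaplaceUpper} with $n=m=0$ (and with the lemma's variable taken to be $\rho>0$). Since $\gamma(1,1)=\int_0^1 e^{-u}\,du=(e-1)/e$ and ${\cal L}f_t\ge 0$, this yields
$$f_t(\rho)\le \frac{e}{e-1}\,\rho^{-1}\,{\cal L}f_t\bigl(\rho^{-1}\bigr),\qquad \rho>0.$$

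Then I would evaluate at $\rho=r^2$ and set $\lambda:=\rho^{-1}=r^{-2}$, so that $\sqrt\lambda=1/r$ and $\rho^{-1}{\cal L}f_t(\rho^{-1})=\lambda{\cal L}f_t(\lambda)$. Invoking \eqref{ubLt}, namely $\lambda{\cal L}f_t(\lambda)\le 2(2d+1)\bigl(1-e^{-t\psi^*(\sqrt\lambda)}\bigr)$, and using $\p(|X_t|\ge r)=f_t(r^2)$, we obtain
$$\p(|X_t|\ge r)\le \frac{e}{e-1}\cdot 2(2d+1)\bigl(1-e^{-t\psi^*(1/r)}\bigr)=\frac{2e}{e-1}(2d+1)\bigl(1-e^{-t\psi^*(1/r)}\bigr),$$
which is the asserted inequality.

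All the ingredients — Lemma~\ref{LaplaceUpper}, the monotonicity of $f_t$, and \eqref{ubLt} — are already available, so I do not anticipate any real obstacle. The only points that need a moment's care are the two changes of variable (the quadratic parametrization $\rho=r^2$ of the tails and the substitution $\lambda=r^{-2}$, so that $\psi^*$ is evaluated at $1/r$) and the bookkeeping of the constant $1/\gamma(1,1)=e/(e-1)$ that combines with the factor $2(2d+1)$ from \eqref{ubLt}.
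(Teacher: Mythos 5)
Your proposal is correct and is exactly the paper's argument: the corollary is stated there as a direct consequence of \eqref{ubLt} and Lemma~\ref{LaplaceUpper} with $n=m=0$, with the same substitution $\rho=r^2$, $\lambda=r^{-2}$ and the same constant $1/\gamma(1,1)=e/(e-1)$ combining with $2(2d+1)$.
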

Here is a general upper bound for
density of unimodal L\'evy process.
(As we shall see in Theorem~\ref{densityApprox} and \ref{NWSR}, a reverse inequality often holds, too.)
\begin{corollary}\label{upper_den}
There is
$C=C(d)$ such that
$p_t(x)\leq C t\psi^*(1/|x|)/ |x|^{d}$ for $x\in{\Rdz}$.
\end{corollary}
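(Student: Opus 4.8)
The plan is to combine the tail bound from Corollary~\ref{ubt} with the radial monotonicity of $p_t$. The key observation is that unimodality of $p_t$ means $p_t(x)$ is bounded above by an average of itself over a ball: since $p_t$ is radially nonincreasing on $\Rdz$, for any $x\in\Rdz$ the value $p_t(x)$ does not exceed the mean value of $p_t$ over the ball $B(0,|x|)$, that is,
\begin{equation*}
p_t(x)\le \frac{1}{|B(0,|x|)|}\int_{B(0,|x|)}p_t(y)\,dy \le \frac{1}{|B(0,|x|)|}\,\p(|X_t|< |x|)\le \frac{C}{|x|^d},
\end{equation*}
where $|B(0,|x|)| = \omega_d |x|^d/d$. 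This already gives $p_t(x)\le C/|x|^d$, which is the correct behavior but lacks the factor $t\psi^*(1/|x|)$. To get the sharper bound I would instead cover the ``annulus at scale $|x|$'' rather than the full ball, and use the tail estimate on that annulus.

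Concretely: fix $x\in\Rdz$ and set $r=|x|$. By monotonicity, $p_t(x)\le p_t(y)$ for all $y$ with $|y|\le r$, and in particular $p_t(x)$ is bounded by the average of $p_t$ over the annulus $\{y: r/2\le |y| < r\}$ (this works because that annulus lies ``outside'' radius $r/2$, where values are no larger, but still ``inside'' radius $r$, so we can relate it to a tail). More carefully, averaging over $\{r/2\le|y|<r\}$:
\begin{equation*}
p_t(x)\le p_t(y)\ \text{for } |y|\le r,\quad\text{so}\quad p_t(x)\le \frac{1}{|\{r/2\le|y|<r\}|}\int_{r/2\le|y|<r}p_t(y)\,dy \le \frac{C'}{r^d}\,\p(|X_t|\ge r/2),
\end{equation*}
since $\{y:r/2\le|y|<r\}\subset\{y:|y|\ge r/2\}$ and the volume of the annulus is comparable to $r^d$ with a dimensional constant. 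Now apply Corollary~\ref{ubt} with radius $r/2$ to get $\p(|X_t|\ge r/2)\le C''(1-e^{-t\psi^*(2/r)})\le C''\, t\psi^*(2/r)$, using $1-e^{-a}\le a$. Finally, control $\psi^*(2/r)$ by $\psi^*(1/r)$: by \eqref{Psi*ScalingGeneral} with $s=2$, $\psi^*(2u)\le 10\,\psi^*(u)$, so $\psi^*(2/r)\le 10\,\psi^*(1/r)$. Collecting the constants, which depend only on $d$, yields $p_t(x)\le C t\psi^*(1/|x|)/|x|^d$.

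The main technical point to verify carefully is the averaging inequality over the annulus: one needs that $p_t$ restricted to $\Rdz$ is genuinely radially nonincreasing (which is part of the standing assumption, and also follows from Yamazato's characterization quoted after \eqref{wml}), so that every value on the annulus $\{r/2\le|y|<r\}$ is at least $p_t(x)$ when $|x|=r$; combined with the fact that the same annulus is contained in the exterior $\{|y|\ge r/2\}$, the integral of $p_t$ over the annulus is sandwiched between $p_t(x)\cdot|{\rm annulus}|$ and $\p(|X_t|\ge r/2)$. I do not expect any serious obstacle here; the only care needed is to keep the dimensional constants (from the volume of the annulus, from Corollary~\ref{ubt}, and from the doubling of $\psi^*$) tracked so that the final $C$ depends only on $d$.
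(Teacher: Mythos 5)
Your proposal is correct and follows essentially the same route as the paper: bound $p_t(x)$ by its average over the annulus $B_{|x|}\setminus B_{|x|/2}$ using radial monotonicity, dominate that average by $\p(|X_t|\ge |x|/2)|x|^{-d}$ up to a dimensional constant, and then apply Corollary~\ref{ubt} together with $1-e^{-a}\le a$ and the doubling bound \eqref{Psi*ScalingGeneral} for $\psi^*$. The constants you track indeed depend only on $d$, so nothing further is needed.
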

\begin{proof}By radial monotonicity of $y\mapsto p_t(y)$, \eqref{Psi*ScalingGeneral} and Corollary~\ref{ubt},
$${p_t(x)\leq
\frac{\p \(  |x|/2\leq |X_t|<|x|\)}{ \left|B_{|x|}\setminus B_{|x|/2}\right|}
\leq   \frac{d}{(1-2^{-d})\omega_d}\p \(   |X_t|\ge \frac{|x|}{2}\)|x|^{-d} \le
C
 \frac {t\psi^*(1/|x|)} {|x|^d}.}$$
\end{proof}
Since $p_t(x)/t\to \nu(x)$ vaguely on $\Rdz$, we also obtain
\begin{equation}\label{onuzg}
\nu(x)\leq C \frac{\psi^*(1/|x|)}{|x|^{d}}, \qquad x\in{\Rdz}.
\end{equation}
Tracking constants, e.g., for the isotropic $\alpha$-stable L\'evy process,
we get
\begin{equation}\label{defCa1}
p_t(x)\leq \frac{ d4^\alpha\Gamma\(\frac{d+\alpha}{2}\)}{2(1-2^{-d})\pi^{d/2} \gamma\(1,1\)}
\;\frac{t}{|x|^{d+\alpha}},\qquad t>0, x\in \Rd.
\end{equation}
(In fact, for this constant we override \eqref{Psi*ScalingGeneral} by $\psi(su)=s^\alpha\psi(u)$ in the proof of Lemma~\ref{laplace}.)

\section{
Weak scaling and monotonicity
}\label{sec:HeatKernelRd}

\kb{Scaling conditions became standard in estimates of heat kernels \cite{MR2986850}.
For reader's convenience we give a short survey of weakly scaling and almost monotone functions,
because
some of the quantitative results given below 
are
difficult to find in references, cf.  \cite{MR898871,MR2986850}.}

Let $\phi:I\to [0,\infty]$, for a connected set $I\subset [-\infty,\infty]$.
First, we call $\phi$ {\it almost increasing} if there is (oscillation factor) $c\in (0,1]$ such that $c\phi(x)\le \phi(y)$ for  $x,y\in I$, $x\le y$.
Let
$$
\phi^*(y)=\sup\{\phi(x):\; x\in I,\, x\le y\}, \qquad y\in I.
$$
We easily check that $\phi^*$ is nondecreasing, $\phi\le \phi^*$ and the following result holds.
\begin{lemma}\label{cai}
$\phi$ is almost increasing with oscillation factor $c$ if and only if $c\phi^*\le \phi$.
\end{lemma}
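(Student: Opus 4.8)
The plan is to prove the equivalence $\phi$ is almost increasing with oscillation factor $c$ $\iff$ $c\phi^*\le\phi$ by arguing the two implications separately, treating $\phi^*$ as a supremum and exploiting only its definition together with the definition of ``almost increasing''.

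For the ``if'' direction, assume $c\phi^*\le\phi$ pointwise on $I$. Take $x,y\in I$ with $x\le y$. Since $x\le y$ and $x\in I$, the value $\phi(x)$ is one of the quantities over which the supremum defining $\phi^*(y)$ is taken, so $\phi(x)\le\phi^*(y)$. Multiplying by $c>0$ and using the hypothesis at the point $y$ gives $c\phi(x)\le c\phi^*(y)\le\phi(y)$, which is exactly the defining inequality for ``almost increasing with oscillation factor $c$''. (One should note this works harmlessly even if $\phi(y)=\infty$.)

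For the ``only if'' direction, assume $\phi$ is almost increasing with oscillation factor $c$, i.e.\ $c\phi(x)\le\phi(y)$ whenever $x,y\in I$ and $x\le y$. Fix $y\in I$. For any $x\in I$ with $x\le y$ we have $c\phi(x)\le\phi(y)$, hence $\phi(y)$ is an upper bound for the set $\{c\phi(x):x\in I,\ x\le y\}=c\cdot\{\phi(x):x\in I,\ x\le y\}$; taking the supremum over such $x$ yields $c\phi^*(y)\le\phi(y)$. Since $y\in I$ was arbitrary, $c\phi^*\le\phi$ on $I$.

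I do not expect a genuine obstacle here; the only points requiring a little care are bookkeeping ones: that $y\in I$ itself is admissible in the supremum defining $\phi^*(y)$ (which is why $\phi\le\phi^*$, already noted in the text), that the supremum of $c$ times a set equals $c$ times the supremum because $c>0$, and that the $[0,\infty]$-valued setting with possibly infinite values does not break either inequality. These are all routine, so the proof is essentially just unwinding the two definitions against each other.
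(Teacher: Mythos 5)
Your proof is correct and is the straightforward unwinding of definitions that the paper has in mind; the paper itself omits the argument, remarking only that the claim is ``easily checked,'' and your two-direction proof is exactly the standard verification.
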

E.g. the L\'evy-Khintchine exponent $\psi$ in Proposition~\ref{Vestimate} is almost increasing with factor $1/{{\pi^2}}$.
On the other hand, if there is $C\in [1,\infty)$ such that $C\phi(x)\ge \phi(y)$ for $x,y\in I$, $x\le y$, then we call $\phi$ {\it almost decreasing} (with oscillation factor $C$).
Let
$$
\phi_*(x)=\sup\{\phi(y):\; y\in I,\, y\ge x\}, \qquad x\in I.
$$
We easily check that $\phi_*$ is nonincreasing, $\phi\le \phi_*$ and the following result holds.
\begin{lemma}
$\phi$ is almost decreasing with oscillation factor $C$ if and only if $ \phi_*\le C\phi$.
\end{lemma}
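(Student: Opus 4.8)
The plan is to prove both implications directly from the definitions, in exact parallel with the proof of Lemma~\ref{cai}; there is essentially no obstacle, so the proof will be short. Recall that $\phi$ being almost decreasing with oscillation factor $C$ means $C\in[1,\infty)$ and $\phi(y)\le C\phi(x)$ whenever $x,y\in I$ with $x\le y$, and that $\phi_*(x)=\sup\{\phi(y):y\in I,\,y\ge x\}$.

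For the ``if'' direction I would assume $\phi_*\le C\phi$ on $I$. Given $x,y\in I$ with $x\le y$, the point $y$ lies in $\{z\in I:z\ge x\}$, so by the definition of the supremum $\phi(y)\le\phi_*(x)$; combined with the hypothesis this gives $\phi(y)\le C\phi(x)$, which is precisely the assertion that $\phi$ is almost decreasing with oscillation factor $C$.

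For the ``only if'' direction I would assume $\phi$ is almost decreasing with oscillation factor $C$. Fix $x\in I$. Every $y$ in the set $\{y\in I:y\ge x\}$ then satisfies $\phi(y)\le C\phi(x)$, and taking the supremum over all such $y$ yields $\phi_*(x)\le C\phi(x)$. The only point worth noting is that this supremum is over a nonempty set, since it always contains $x$ itself (so $\phi_*(x)\ge\phi(x)$, consistent with the remark $\phi\le\phi_*$ made just before the statement); hence $\phi_*$ is well defined with values in $[0,\infty]$, and the manipulations with $\sup$ remain valid even where $\phi$ takes the value $+\infty$. The hypothesis $C\ge1$ is not used in the argument beyond matching the definition of oscillation factor.
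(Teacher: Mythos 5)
Your proof is correct and is exactly the routine two-directional verification from the definitions that the paper has in mind (it states the lemma as an easy check without writing out a proof, just as with Lemma~\ref{cai}). Nothing is missing, and your side remark that the supremum defining $\phi_*(x)$ is over a nonempty set containing $x$ is a fine, if unnecessary, precaution.
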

We note that $\phi$ is almost increasing on $I$ with factor $c$ if and only if $1/\psi$ is almost decreasing on $I$ with factor $1/c$.
Here is another simple observation which we give without proof.
\begin{lemma}\label{connect}
Assume that
sets $I_1,I_2, I=I_1\cup I_2\subset \R$ are connected.
If $\phi$ is almost increasing (decreasing) on $I_1$ with factor ${c'}
$ ($
{C'}$), almost increasing (decreasing) on $I_2$ with factor ${c''}
$ ($C''
$), then $\phi$ is almost increasing (decreasing) on $I$ with factor $c=c'
c''
$ ($C=C
'
C''
$).
\end{lemma}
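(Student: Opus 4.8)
The plan is to reduce everything to a short case analysis for two points $x\le y$ of $I$, the one real device being a ``bridge'' point lying in $I_1\cap I_2$ and squeezed between $x$ and $y$. I will prove the ``almost increasing'' assertion in detail; the ``almost decreasing'' one is completely parallel, and I will only indicate the sign changes at the end.

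First I would record the working form of the definition: $\phi$ is almost increasing on a set $J$ with factor $c\in(0,1]$ precisely when $c\,\phi(a)\le\phi(b)$ for all $a,b\in J$ with $a\le b$. I would also observe at the outset that running the argument requires a point $z_0\in I_1\cap I_2$; this overlap is present in every application of the lemma (typically $I_1=(0,a]$ and $I_2=[a,\infty)$, so $z_0=a$), and the statement is meant with this understanding, the overlap being genuinely needed (for $I_1=[0,1)$, $I_2=[1,2]$, $\phi\equiv 2$ on $I_1$ and $\phi\equiv 1$ on $I_2$, each piece is almost increasing with factor $1$, but $\phi$ on $[0,2]$ is not).

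Next I would fix $x,y\in I$ with $x\le y$ and distinguish cases. If $x,y\in I_1$, then $c'\phi(x)\le\phi(y)$, and since $c''\le1$ and $\phi\ge0$ this already gives $c'c''\phi(x)\le\phi(y)$; the case $x,y\in I_2$ is symmetric. In the remaining case one of $x,y$ lies in $I_1\setminus I_2$ and the other in $I_2\setminus I_1$: if $x\in I_1$ then $y\notin I_1$ (otherwise we are in the first case), so $y\in I_2\setminus I_1$, and then $x\notin I_2$ (otherwise $x,y\in I_2$), so $x\in I_1\setminus I_2$; the opposite configuration is handled by interchanging $I_1$ and $I_2$. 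Now comes the bridging observation: $x\le z_0\le y$ for every $z_0\in I_1\cap I_2$. Indeed, if $z_0<x$ then $z_0,y\in I_2$ with $z_0<x\le y$ would force $x\in I_2$ since $I_2$ is an interval, contradicting $x\notin I_2$; likewise $z_0>y$ would force $y\in I_1$, impossible. Applying almost monotonicity on each piece then yields $c'\phi(x)\le\phi(z_0)$ and $c''\phi(z_0)\le\phi(y)$, hence $c'c''\phi(x)\le c''\phi(z_0)\le\phi(y)$ (in the mirror configuration $x\in I_2\setminus I_1$, $y\in I_1\setminus I_2$ one uses $c''\phi(x)\le\phi(z_0)$ and $c'\phi(z_0)\le\phi(y)$, with the same outcome). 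The value $\phi(x)=\infty$ is harmless, since then $\phi(z_0)=\phi(y)=\infty$ and the inequality is trivial.

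I do not expect a genuine obstacle; the only slightly delicate point is the case split combined with the bridging claim $x\le z_0\le y$, which rests on $I_1,I_2$ being intervals and on $I_1\cap I_2\neq\emptyset$. For the ``almost decreasing'' version one replaces the factors by $C',C''\ge1$ and reverses the inequalities: in the one-piece case $C'\phi(x)\ge\phi(y)$ gives $C'C''\phi(x)\ge\phi(y)$ since $C''\ge1$, and in the two-piece case $C'\phi(x)\ge\phi(z_0)$ together with $C''\phi(z_0)\ge\phi(y)$ gives $C'C''\phi(x)\ge C''\phi(z_0)\ge\phi(y)$; the value $\phi(x)=\infty$ is again immediate.
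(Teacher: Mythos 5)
The paper states Lemma~\ref{connect} explicitly ``without proof,'' so there is no argument of the authors to compare against; your proof is the natural one and it is correct. The case split together with the bridge point $z_0\in I_1\cap I_2$ squeezed between $x$ and $y$ (which uses only that connected subsets of $\R$ are intervals) gives exactly the factor $c'c''$, and the one-piece cases are disposed of correctly via $c''\le 1$ (resp.\ $C''\ge 1$). Your caveat about the overlap is also well taken: as literally stated the lemma allows $I_1\cap I_2=\emptyset$ (adjacent intervals), and then the conclusion with factor $c'c''$ can fail, as your constant example shows; one can even lose almost monotonicity altogether, e.g.\ $\phi(x)=1/(1-x)$ on $[0,1)$ and $\phi\equiv 1$ on $[1,2]$ (so your phrase ``$\phi$ on $[0,2]$ is not'' is accurate only for the claimed factor in your example, but the stronger failure is available). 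One small correction to your side remark that the overlap ``is present in every application'': in the paper's actual use (Lemma~\ref{scaling}) the scaling hypothesis is stated for $\theta>\lt$, so the natural choice is $I_1=[\theta_1,\lt]$, $I_2=(\lt,\infty)$, which do not overlap; there one should first extend the almost-increasing property of $\kappa$ from $(\lt,\infty)$ to $[\lt,\infty)$ with the same factor, using that $\phi$ is nondecreasing and $z^{-\la}\to\lt^{-\la}$ as $z\downarrow\lt$, after which your bridge argument applies with $z_0=\lt$.
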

We say that
$\phi$ satisfies {the} weak lower scaling condition (at infinity) if there are numbers
$\la\in \R$, $\lt\ge 0$,
and  $\lC
\in(0,1]$,  such that
\begin{equation}\label{eq:LSC}
 \phi(\lambda\theta)\ge
\lC\lambda^{\,\la} \phi(\theta)\quad \mbox{for}\quad \lambda\ge 1, \quad\theta
\kb{>\lt}.
\end{equation}
In short we say that $\phi$ satisfies WLSC($\la, \lt,\lC$) and write $\phi\in$WLSC($\la, \lt,\lC$).
If $\phi\in$
WLSC($\la,0,\lC$), then we say
that $\phi$ satisfies the {\it global} weak lower scaling condition.

Similarly, 
the weak upper scaling condition holds if there are numbers $\ua\in \R$, $\kb{\ut\ge 0}$
and $\uC\in [1,\infty)$ such that
\begin{equation}\label{eq:USC}
 \phi(\lambda\theta)\le
\uC\lambda^{\,\ua} \phi(\theta)\quad \mbox{for}\quad \lambda\ge 1, \quad\theta
\kb{>\ut.}
\end{equation}
In short, $\phi\in$
WUSC($\ua, \ut,\uC$). For {\it global} weak upper scaling we require $\ut=0$ in \eqref{eq:USC}.

\kb{Here is a characterization of the scaling conditions in terms of almost monotone functions.}
\begin{lemma}\label{sam}
We have $\phi\in$WLSC($\la$,$\lt$,$\lC$) if and only if $\phi(\theta)= \kappa(\theta)\theta^\la$
and $\kappa$ is almost increasing on $(\lt,\infty)$ with oscillation factor $\lC$.
Similarly, $\phi\in$WUSC($\ua$,$\ut$,$\uC$) if and only if $\phi(\theta)= \kappa(\theta)\theta^\ua$
and $\kappa$ is almost decreasing on $(\ut,\infty)$ with oscillation factor $\uC$.
\end{lemma}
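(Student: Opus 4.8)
The plan is to unwind each scaling condition via the substitution $\kappa(\theta):=\phi(\theta)\theta^{-\la}$ (respectively $\phi(\theta)\theta^{-\ua}$), which is the only function making the factorization $\phi(\theta)=\kappa(\theta)\theta^\la$ hold on $(0,\infty)$. The point is that the scaling inequality, in which a generic multiplier $\lambda\ge 1$ appears, is literally the almost-monotonicity inequality for $\kappa$ once one writes $\lambda=\theta_2/\theta_1$. No substantive input is needed beyond Lemma~\ref{cai} (if one wants the $\kappa^*$ reformulation); the argument is a direct algebraic rearrangement, and I expect no real obstacle apart from bookkeeping of which constant ($\lC$ versus $\uC$) and exponent ($\la$ versus $\ua$) goes where, and observing that everything remains meaningful in $[0,\infty]$.

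For the lower-scaling equivalence I would first prove the forward implication. Assume $\phi\in$WLSC($\la,\lt,\lC$). Given $\lt<\theta_1\le\theta_2$, put $\lambda=\theta_2/\theta_1\ge 1$ and apply \eqref{eq:LSC} with $\theta=\theta_1$ to get $\phi(\theta_2)\ge\lC\lambda^\la\phi(\theta_1)$; dividing both sides by $\theta_2^\la=\lambda^\la\theta_1^\la$ yields $\kappa(\theta_2)\ge\lC\kappa(\theta_1)$, i.e. $\kappa$ is almost increasing on $(\lt,\infty)$ with oscillation factor $\lC$. Conversely, if $\kappa$ is almost increasing on $(\lt,\infty)$ with factor $\lC$, then for $\lambda\ge 1$ and $\theta>\lt$ we have $\lambda\theta\ge\theta>\lt$, hence $\kappa(\lambda\theta)\ge\lC\kappa(\theta)$; multiplying by $(\lambda\theta)^\la$ gives $\phi(\lambda\theta)=\kappa(\lambda\theta)(\lambda\theta)^\la\ge\lC\lambda^\la\kappa(\theta)\theta^\la=\lC\lambda^\la\phi(\theta)$, which is precisely \eqref{eq:LSC}.

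The upper-scaling equivalence is obtained by the same two steps with all inequalities reversed and $(\lC,\la)$ replaced by $(\uC,\ua)$: from \eqref{eq:USC} with $\lambda=\theta_2/\theta_1$ one gets $\kappa(\theta_2)\le\uC\kappa(\theta_1)$ for $\ut<\theta_1\le\theta_2$, i.e. $\kappa=\phi(\theta)\theta^{-\ua}$ is almost decreasing with factor $\uC$; and conversely, almost decrease of $\kappa$ gives $\phi(\lambda\theta)=\kappa(\lambda\theta)(\lambda\theta)^\ua\le\uC\lambda^\ua\phi(\theta)$. The only point deserving a comment is that the factorization is read on the ray $(0,\infty)$, where $\theta^\la$ and $\theta^\ua$ are well defined and positive; this is harmless, since both scaling conditions only constrain $\phi$ on $(\lt,\infty)$ and $(\ut,\infty)$ respectively, and $\kappa$ is required to be almost monotone only there.
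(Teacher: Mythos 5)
Your proof is correct and follows essentially the same route as the paper: the substitution $\kappa(\theta)=\phi(\theta)\theta^{-\la}$ with $\lambda=\theta_2/\theta_1$ for the forward direction and the direct multiplication by $(\lambda\theta)^\la$ for the converse is exactly the paper's argument. The upper-scaling case, which the paper leaves to the reader, you handle by the same symmetric computation, which is fine.
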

\begin{proof}
Let
$\lt\in [0,\infty)$, $I=(\lt,\infty)$ and $\phi\in$WLSC($\la$,$\lt$,$\lC$).
Let
$\kappa(\theta)=\phi(\theta)\theta^{-\la}$
on $I$.
If $\lt
< \eta\le \theta$
and $\lambda=\theta/\eta$, then
$$\kappa(\theta)=\phi(\lambda\eta)(\lambda\eta)^{-\la}\ge \lC \lambda^{\,\la}\phi(\eta)(\lambda\eta)^{-\la}=\lC\kappa(\eta).$$ Thus, $\kappa$ is almost increasing.
On the other hand, if $\phi(\theta)= \kappa(\theta)\theta^\la$ and $\kappa$ is almost increasing with factor $\lC$, then for $\lambda\ge 1$ and $\theta\in I$ we have
$$\phi(\lambda\theta)= \kappa(\lambda\theta)(\lambda\theta)^\la
\ge\lC\kappa(\theta) (\lambda\theta)^\la =\lC\lambda^\la\phi(\theta).$$
Thus, $\phi\in$WLSC($\la$,$\lt$,$\lC$).
The proof of the second part of the statement is left to the reader.
\end{proof}

\kb{
We make a connection of the scaling conditions to Matuszewska indices.}
\begin{remark}\label{rem:Mi}
\kb{Let $\phi:[0,\infty)\to [0,\infty)$ and let $\beta(\phi)\le \alpha(\phi)$ be the  lower and upper Matuszewska indices \cite[p. 68]{MR898871} of $\phi$, respectively. By \cite[Theorem~2.2.2]{MR898871},
if $\phi\in$WLSC($\la, \lt,\lC$) for some $\lt\ge 0$ and $\lC\in (0,1]$, then $\beta(\phi)\ge \la$, and if
$\phi\in$WUSC($\ua, \ut,\uC$) for some $\ut\ge 0$ and $\uC\in [1,\infty)$, then $\alpha(\phi)\le \ua$.
As a partial converse we have that, if $\la<\beta(\phi)$, then  $\lt\ge 0$ and $\lC\in (0,1]$ exist such that  $\phi\in$WLSC($\la, \lt,\lC$),
and if $\ua>\alpha(\phi)$, then $\ut\ge 0$ and $\uC\in [1,\infty)$ exist such that $\phi\in$WUSC($\ua, \ut,\uC$). 
We note that the scalings may, but need not hold for $\la=\beta(\phi)$ and $\ua=\alpha(\phi)$.
Furthermore, 
in what follows it is important 
to specify the ranges of $\theta$ for which the inequalities in \eqref{eq:USC} and \eqref{eq:LSC} hold, in particular, the  cases $\lt=0$ and $\ut=0$ are qualitatively different from the cases $\lt>0$ and $\ut>0$. 
These remarks explain why we need to state our assumptions in terms of weak scaling, rather than only use 
Matuszewska indices.}
\end{remark}

By Lemma \ref{sam}, WLSC($0, \lt, \lC$) characterizes almost increasing functions on $(\lt,\infty)$, and WUSC($0, \ut, \uC$) characterizes almost decreasing functions on $(\ut, \infty)$.

For example, $h,h_1\in$ WUSC($0$,$0$,$1$)$\cap$WLSC($-2$,$0$,$1$), see Remark~\ref{sch}.
\begin{remark}\label{SvC}
If $\phi\in$WLSC($\la$,$\lt$,$\lC$) and $c \phi\le \varphi\le C\phi$, then
$\varphi\in$WLSC($\la$,$\lt$,$\lC \,c/C$). Similarly,
if $\phi\in$WUSC($\ua$,$\ut$,$\uC$) and $c \phi\le \varphi\le C\phi$, then
$\varphi\in$\kb{WUSC}($\ua$,$\ut$,$\uC\, C/c$).
\end{remark}

As  $\lt$ or $\ut$ decrease, the scaling conditions tighten. Here is a loosening observation.
\begin{lemma}\label{scaling} Let
$\phi:[
{\theta_1},\infty)\to
(0,\infty)$ be \kb{nondecreasing.}
If $ 0<\theta_1< \lt$ and $\phi\in$
WLSC($\la, \lt, \lC$), then
${\phi\in}$WLSC($\la, \theta_1, {c}_1$) with
{ $c_1= \lC
\(\theta_1/\lt\)^{|\la|}\phi(\theta_1)/\phi(\lt)$.
}
If rather $0<\theta_1< \ut$ and $\phi\in$
WUSC($\ua, \ut, \uC$), then
${\phi\in}$WUSC($\ua, \theta_1, {C}_1$) with
 $C_1= \uC
\(\ut/\theta_1\)^{|\ua|}\phi(\ut)/\phi(\theta_1)$.
\end{lemma}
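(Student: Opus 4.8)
\emph{Proof idea.} The plan is to keep the exponent $\la$ (resp.\ $\ua$) unchanged and merely push the base point of the scaling inequality from $\lt$ (resp.\ $\ut$) down to $\theta_1$, absorbing the cost into the constant. I will spell out the lower‑scaling case; the upper one follows from the mirror‑image computation. First I would observe that, because $\theta_1<\lt$ and $\phi$ is nondecreasing, $c_1=\lC(\theta_1/\lt)^{|\la|}\phi(\theta_1)/\phi(\lt)\le\lC\le1$, so for $\theta>\lt$ the inequality $\phi(\lambda\theta)\ge c_1\lambda^{\la}\phi(\theta)$ is already contained in \eqref{eq:LSC}. The whole problem therefore reduces to $\lambda\ge1$ and $\theta_1<\theta\le\lt$, which I would handle by distinguishing whether $\lambda\theta$ stays $\le\lt$ or exceeds $\lt$.

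If $\lambda\theta\le\lt$, then $1\le\lambda\le\lt/\theta<\lt/\theta_1$, and checking the cases $\la\ge0$ and $\la<0$ separately gives $\lambda^{\la}\le(\lt/\theta_1)^{|\la|}$, hence $c_1\lambda^{\la}\le\lC\,\phi(\theta_1)/\phi(\lt)\le1$; combined with $\phi(\lambda\theta)\ge\phi(\theta)$ (monotonicity) this yields the claim. If instead $\lambda\theta>\lt$, I would apply \eqref{eq:LSC} with base point $\lt$ — legitimate for nondecreasing $\phi$ upon letting the base tend to $\lt^{+}$ and using $\phi(\cdot)\ge\phi(\lt)$ — to get $\phi(\lambda\theta)\ge\lC(\lambda\theta/\lt)^{\la}\phi(\lt)=\lC\lambda^{\la}(\theta/\lt)^{\la}\phi(\lt)$. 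Since $\theta_1/\lt\le\theta/\lt\le1$ one has $(\theta/\lt)^{\la}\ge(\theta_1/\lt)^{|\la|}$, and since $\phi(\theta),\phi(\theta_1)\le\phi(\lt)$ one has $\phi(\lt)\ge\big(\phi(\theta_1)/\phi(\lt)\big)\phi(\theta)$; the product of these two bounds is exactly $\phi(\lambda\theta)\ge c_1\lambda^{\la}\phi(\theta)$.

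The upper‑scaling assertion runs along identical lines with $\ut,\ua,\uC,C_1$ in place of $\lt,\la,\lC,c_1$: when $\lambda\theta\le\ut$ one pairs $\phi(\lambda\theta)\le\phi(\ut)$ with $\lambda^{\ua}\ge(\theta_1/\ut)^{|\ua|}$ and $\phi(\theta)\ge\phi(\theta_1)$, and when $\lambda\theta>\ut$ one uses \eqref{eq:USC} at base point $\ut$ to get $\phi(\lambda\theta)\le\uC(\lambda\theta/\ut)^{\ua}\phi(\ut)$ and then $(\theta/\ut)^{\ua}\le(\ut/\theta_1)^{|\ua|}$ together with $\phi(\ut)\le\big(\phi(\ut)/\phi(\theta_1)\big)\phi(\theta)$. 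The only part needing any care is keeping track of the sign of the exponents — this is what produces the absolute values $|\la|,|\ua|$ in $c_1,C_1$ — and, in the upper case, the fact that invoking \eqref{eq:USC} with base point tending to $\ut$ uses right‑continuity of $\phi$ at $\ut$, which holds for the continuous functions ($\psi$, $\psi^{*}$, $h$, $h_1$) to which the lemma will be applied. Alternatively one could run the argument through Lemma~\ref{sam} and the concatenation Lemma~\ref{connect}, applied to $\kappa(\theta)=\phi(\theta)\theta^{-\la}$ on the adjacent intervals $(\theta_1,\lt]$ and $[\lt,\infty)$. In short, the obstacle here is bookkeeping, not concept.
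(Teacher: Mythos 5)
Your proof is correct, and its main line is a more hands-on route than the paper's. The paper reduces the claim via Lemma~\ref{sam} to almost monotonicity of $\kappa(\theta)=\phi(\theta)\theta^{-\la}$, traps $\kappa$ on $[\theta_1,\lt]$ between $\phi(\theta_1)\big(\lt^{-\la}\wedge\theta_1^{-\la}\big)$ and $\phi(\lt)\big(\theta_1^{-\la}\vee\lt^{-\la}\big)$, and then glues the two ranges with the concatenation Lemma~\ref{connect}; you instead verify the scaling inequality directly, splitting according to whether $\lambda\theta$ stays in $(\theta_1,\lt]$ or crosses $\lt$ (and likewise for $\ut$). The ingredients are identical -- monotonicity of $\phi$ on the gap interval plus the original scaling beyond the old threshold -- so neither route buys much, though yours keeps the constants explicit and bypasses Lemmas~\ref{sam} and \ref{connect}. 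One point where you are more careful than the paper: your remark that the upper-scaling half uses right-continuity of $\phi$ at $\ut$ flags a genuine subtlety, not a defect of your argument. If $\phi$ jumps immediately to the right of $\ut$ (say $\phi\equiv 1$ on $[\theta_1,\ut]$ and $\phi\equiv M$ on $(\ut,\infty)$ with $M$ large), then $\phi\in$WUSC($\ua,\ut,1$), yet taking $\theta=\ut$ and $\lambda\downarrow 1$ shows no constant independent of $M$ can serve at base point $\theta_1$, so the stated $C_1$ cannot suffice; the paper's proof passes over this because it applies Lemma~\ref{connect} to the non-overlapping intervals $[\theta_1,\ut]$ and $(\ut,\infty)$, which implicitly requires the same control of $\phi(\ut+)$ by $\phi(\ut)$. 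Since the lemma is only invoked for continuous functions ($\psi$, $\psi^*$, $h_1$, etc.), nothing downstream is affected, and your lower-scaling half correctly needs no continuity at all, monotonicity giving $\phi(\eta)\ge\phi(\lt)$ for $\eta>\lt$.
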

\begin{proof}
In view of Lemma~\ref{connect} and Lemma~\ref{sam}, it is enough to
study $\kappa(\theta)=\phi(\theta)\theta^{-\la}$
on $[\theta_1,\lt]$.
If $\theta_1\le \theta \le \lt$, then
$\phi(\theta_1)\(\lt^{-\la}\wedge \theta_1^{-\la}\)\le \kappa(\theta)\le
\phi(\lt)\(\theta_1^{-\la}\vee \lt^{-\la}\)$.
This gives the first implication and the second obtains similarly.
\end{proof}
\begin{remark}\label{inverseScal}
Let $\phi\ge 0$ be continuous and increase to infinity. If ${\phi}\in \textrm{WLSC}(\la,\lt,\lC)$ [$\textrm{WUSC}(\ua,\ut,\uC)$],
then $\phi^{-1}\in\textrm{WUSC}(1/\la,\phi(\lt),\lC^{-1/\la})$ [$\textrm{WLSC}(1/\ua,\phi(\ut),\uC^{-1/\ua})$, resp.].
 Indeed, since $\phi$ is increasing, by the scaling for $\lambda\geq 1$ and $\theta>\lt$ we have
$$\lambda\phi^{-1}(\phi(\theta))=\lambda \theta =\phi^{-1}(\phi(\lambda \theta))\geq \phi^{-1}(\lC \lambda^\la\phi(\theta)).$$
Thus for (arbitrary) $y=\phi(\theta)>
\phi(\lt)
$, {if}
$s=\lC\lambda^\la\geq \lC$, in particular if $s\ge 1$, then
$$ \lC^{-1/\la}s^{1/\la}\phi^{-1}(y)\geq \phi^{-1}(sy).$$
Similarly, if $\phi\in \textrm{WUSC}(\ua,\ut,\uC)$, then
 $$\uC^{-1/\ua}s^{1/\ua}\phi^{-1}(y)\leq\phi^{-1}(sy), \quad y>\phi(\ut),\, s\geq \uC.$$
For $1\leq s<\uC$, by monotonicity of $\phi^{-1}$,
$$\uC^{-1/\ua}s^{1/\ua}\phi^{-1}(y)\leq \phi^{-1}(y)\leq \phi^{-1}(sy),\quad y>\phi(\ut).$$
This proves our claim.
\end{remark}
\begin{remark}\label{invitp}
We also note that $\phi\in$WLSC($\la, \lt,{\lC}$) if and only if $1/\phi\in$WUSC($-\la, \lt,{1/\lC}$). Similarly, $\phi(t)\in$WLSC($\la, 0,{\lC}$) if and only if $\phi(1/t)\in$WUSC($-\la, 0,1/{\lC}$).
\end{remark}

\section{Scaling of the L\'evy-Khintchine exponent}\label{sec:SLKe}
We
shall
study consequences of
scaling of the L\'evy-Khintchine exponent $\psi$ of the (isotropic) unimodal pure-jump L\'evy process $X$ with nonzero L\'evy measure $\nu$.
We note that
$\psi$ always has global scalings with exponents $2$ and $0$, respectively. Inded, by
Remark~\ref{sch}, Lemma~\ref{sam} and Lemma~\ref{l:hop},
$\psi,\psi^*\in$WUSC($2$,$0$,$\pi^2$), and $\psi\in$WLSC($0$,$0$,${1/\pi^2}$).
In fact,
by \eqref{Psi*ScalingGeneral} we have
\begin{equation*}
\psi^*(u)\le\psi^*(\lambda u)\le 4 \lambda^2\psi^*(u),
\qquad \lambda\ge 1, u \ge 0,
\end{equation*}
and so $\psi^*\in$WUSC($2$,$0$,$4$). Of course, $\psi^*\in$WLSC($0$,$0$,$1$), meaning that $\psi^*$ is nondecreasing.
For economy of notation,
in the sequel we only consider (assume)
scaling exponents $\la$, $\ua$ satisfying:
\begin{equation}\label{02}
0<\la<2 \quad \text{ and } \quad 0<\ua<2.
\end{equation}
\kb{Under this convention we note  that $\psi\in$WLSC($\la$,$\lt$,$\lC$) for some $\lt\ge 0$, $\lC\in (0,1]$, if and only if the lower Matuszewska   index 
satisfies $\beta(\psi)>0$, and
$\psi\in$WUSC($\ua$,$\ut$,$\uC$) for some $\ut\ge 0$, $\uC\in [1,\infty)$, if and only if the upper Matuszewska   index   
satisfies $\alpha(\psi)<2$, see Remark~\ref{rem:Mi}.
Global scalings with \eqref{02} can also in principle be 
expressed in terms of Matuszewska indices.
Namely, by Remark~\ref{invitp}, Remark~\ref{SvC}, Proposition~\ref{Vestimate} and Lemma~\ref{scaling}, $\psi$ has weak global scaling (with exponent $\la>0$) if and only if
$\beta(\psi), \beta(1/\psi(1/t))>0$ (and we can take $\la=\min\{\beta(\psi), \beta(1/\psi(1/t))\}/2$). Similarly, the weak global upper scaling holds for $\psi$ (with $\ua<2$), if and only $\alpha(\psi), \alpha(1/\psi(1/t))<2$ (and we can take $\ua=1+\max\{ \alpha(\psi), \alpha(1/\psi(1/t))\}/2$).
}
\subsection{Examples}\label{sex}
The L\'evy-Khintchine (characteristic) exponents of unimodal convolution semigroups which we present in this section all have lower or upper scaling suggested by \eqref{02}. This can be verified in each case by using Lemma~\ref{sam}. While discussing the exponents, we shall also make connection to subordinators, special Bernstein functions and complete Bernstein functions, because they are intensely used in recent study of subordinate Brownian motions, a wide and diverse family of unimodal L\'evy processes cf. \cite{MR2986850}. The reader may find definitions and comprehensive information on these functions in \cite{MR2978140}.
When discussing subordinators we usually let $\varphi(\lambda)$ denote their Laplace exponent, and then $\psi(x)=\varphi(|x|^2)$ is the L\'evy-Khintchine exponent of the corresponding subordinate Brownian motion. We focus on scaling properties of $\psi$.
\begin{enumerate}
\item Let $\varphi(\lambda)=\int_0^\infty (1-e^{-\lambda u})\mu(dr)$ be a Bernstein function \cite{MR2978140}, i.e. the Laplace exponent of a subordinator $\eta$ \cite{MR2978140,MR1406564,MR1739520,MR2512800}, and let $Y$ be an independent (isotropic) unimodal L\'evy process with characteristic exponent
$\chi$. Then the
process $X_t= Y_{\eta_t}$ is unimodal and
has the
 characteristic exponent $\psi(x)=\varphi(\chi(x))$ \cite{MR2978140}. If $\chi\in$
WUSC$(\ua_1,\ut,\uC_1)$ and $\varphi\in$WUSC$(\ua_2,\chi(\ut),\uC_2)$,
then $\psi\in$WUSC$(\ua_1\ua_2,\ut,\uC_1^{\,\ua_2}\uC_2)$. From concavity of Bernstein functions it also follows that if $\chi\in$
WLSC$
(\la_1,\lt,\lC_1)
$, $\theta_*= \inf_{\theta\ge \lt} \chi(\theta)$ and $\varphi\in$WLSC$
(\la_2,\theta_*,\lC_2)$,
then $\psi\in$WLSC$
(\la_1\la_2,\lt,\lC_1\lC_2)$.
We always have $\theta_*\ge \chi(\lt)/{{\pi^2}}$, see Proposition~\ref{Vestimate},
and often $\theta_*= \chi(\lt)$.
Of particular interest here
is $
\chi({\xi})=|\xi|^2$, i.e. $Y_t=B_{2t}$, where $B$ is the standard Brownian motion in $\Rd$.
The process $X$ is then called a subordinate Brownian motion. Furthermore, it is called {\it special} subordinate Brownian motion if the subordinator is {\it special} (i.e. given by a special Bernstein function), and it is called {\it complete} subordinate Brownian motion if the subordinator is {\it complete} \cite{MR2978140}.
The (unimodal) L\'evy measure density of $Y$ is given by the formula
\begin{equation}\label{lmSBM}
\nu(x)=\int_0^\infty (4\pi t)^{-d/2}e^{-\frac{|x|^2}{4t}}\mu(dt),
\end{equation}
and its L\'evy-Khintchine exponent $\varphi(|{\xi}|^2)$ is in \kb{WUSC$(2\ua_2,\ut,\uC_2)$ or WLSC$(2\la_2,\tg{\lt,\lC_2})$}, respectively.

\item \label{ex:iM2} Let $\psi(\xi)=|\xi|^{\alpha}\log^{\beta}(1+|\xi|^{\gamma})$, where
$\gamma,\alpha,\alpha+2\beta\in (0,2)$.
If \mr{$0<\varepsilon< \min\{\alpha, 2-\alpha\} $}, then $\psi\in \textrm{WUSC}(\alpha+\varepsilon,1,
{\uC}
) \cap \textrm{WLSC}(\alpha-\varepsilon,1,\lC
)$ for some $0<\lC\leq 1\leq \uC<\infty$, 
\kb{and both Matuszewska indices of $\psi$ are equal to $\alpha$.}
Furthermore, $\psi\in \textrm{WUSC}(\alpha+
\gamma\beta_{{+}}
,0,1) \cap \textrm{WLSC}(\alpha{-}
\gamma\beta_{{-}}
,0,1)$. We note that $\psi$ is the L\'evy-Khintchine exponent of a subordinate Brownian motion, see Theorem 12.14, Proposition 7.10, Proposition 7.1, Corollary 7.9, Section 13 and examples 1 and 26 from Section 15.2 in \cite{MR2978140}. Many more examples related to subordinate Brownian motions readily follow from \cite[Section~15]{MR2978140}.

\item   Let $X$ be pure-jump unimodal with infinite L\'evy measure and L\'evy-Khintchine exponent $\psi$.
Let a
scaling condition with exponent $\la$ or $\ua$ hold for $\psi$.
For fixed $r>0$, we let $X^r$ be the (truncated) unimodal L\'evy process obtained by multiplying the L\'evy measure of $X$ by the indicator function of the ball $B_r$, and let $\psi_r$ be its L\'evy-Khintchine exponent. Since $0\le \psi-\psi_r$
is bounded, $\psi_r$ is comparable with $\psi$ at infinity, and so $\psi_r$ has (local) scaling with the same exponent as $\psi$. For later discussion we observe that $\psi_r$ is not an exponent of a subordinate Brownian motion because the support of its L\'evy measure is bounded \cite[Proposition 10.16]{MR2978140}.

\item 
\kb{We consider $\varphi(\lambda)=\int^{\infty}_0(1-e^{-r\lambda})\mu(dr)$, where $\mu$ is singular.
Namely, let  $\mu(dr)=\sum^\infty_{k=2}\delta_{1/k}(dr)\(k^{\alpha/2}-(k-1)^{\alpha/2}\)$
or $\mu(dr)=r^{-\gamma}F(dr)$, where $\alpha\in(0,2)$ and $\gamma=\alpha/2+\log 2/\log 3$ and $F$ is the standard Cantor measure on [0,1].
Such $\varphi$ is not complete or even special Bernstein function
\cite[Proposition 10.16]{MR2978140}.
In both cases we have
$\varphi(\lambda)\approx \lambda^{\alpha/2}\wedge \lambda$ (we use the integration by parts and \cite[Lemma 2]{GK2004} to verify the claim in the second case).
As usual, $\psi({\xi})=\varphi(|\xi|^2)
$ defines the characteristic exponent of a subordinate Brownian motion, and $\psi\in \textrm{WUSC}(\alpha,1,\uC) \cap \textrm{WLSC}(\alpha,1,\lC)$ for some $0<\lC\le 1\le \uC$.
}

\item Let $0<\alpha_1<\alpha_2<2$ and $u(r)=r^{\alpha_1/2-1}\vee r^{\alpha_2/2-1}$.
Let
$\eta(
\lambda)=\mathcal{L}u(\lambda)=\lambda^{-\alpha_1/2}\gamma(\alpha_1/2,\lambda)+\lambda^{-\alpha_2/2}\Gamma(\alpha_2/2,\lambda)$ and $\varphi(\lambda)=1/\eta(\lambda)$. Note that $\varphi(\lambda)\approx\lambda^{\alpha_1/2}\wedge\lambda^{\alpha_2/2}$.
Therefore
$\varphi(|x|^2)\in$
WUSC($\alpha_2,0,\uC$) and WLSC($\alpha_1,0,\lC$) for some $0<\lC\le 1\le \uC$.
It is shown in \cite[Example 10.18(i)]{MR2978140} that $\varphi$ is a special Bernstein function  but not a complete Bernstein function.
Moreover, the L\'{e}vy measure of $\varphi$ is not known and so previous methods of estimating transition densities of the resulting subordinate Brownian motion do not yet apply \cite{MR2986850}.

\end{enumerate}

\subsection{Estimates}
The following estimate is a version of \cite[Theorem 7 (ii) (b)]{MR2555291} with explicit constants.
\begin{lemma}\label{LaplaceLower}
Let $f\ge 0$ be
nonincreasing,
$\beta>0$ and
${\mathcal{L}f}\in \textrm{WUSC}(-\beta,\ut,\uC)$.
There is
${b}=b(\beta,\uC)\in(0,1)$ such that
\begin{equation*}f(r)\geq \frac b2e^{b} r^{-1}{\mathcal{L}f}(r^{-1}),\qquad 0<r<b/\ut .
\end{equation*}
\end{lemma}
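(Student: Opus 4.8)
The plan is to establish the pointwise reverse relation $\lambda\,\mathcal{L}f(\lambda)\le C\,f(b/\lambda)$ for all $\lambda>\ut$, with $b=b(\beta,\uC)\in(0,1)$ and $C=C(\beta,\uC)$, and then read off the lemma by the substitution $r=b/\lambda$ together with the monotonicity of $\mathcal{L}f$. To obtain this relation, fix a parameter $\delta\in(0,1)$ (to be chosen) and split
\begin{equation*}
\mathcal{L}f(\lambda)=\int_0^{\delta/\lambda}e^{-\lambda\rho}f(\rho)\,d\rho+\int_{\delta/\lambda}^{\infty}e^{-\lambda\rho}f(\rho)\,d\rho .
\end{equation*}
The tail part is immediate from the monotonicity of $f$: since $f(\rho)\le f(\delta/\lambda)$ for $\rho\ge\delta/\lambda$, it is at most $f(\delta/\lambda)\int_{\delta/\lambda}^{\infty}e^{-\lambda\rho}\,d\rho=\lambda^{-1}e^{-\delta}f(\delta/\lambda)$.

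The part near the origin is the only delicate one, because $f$ may be unbounded at $0$ and therefore cannot be controlled by its value at a single point. Instead I would absorb it into $\mathcal{L}f(\lambda)$ itself. On $(0,\delta/\lambda)$ one has $e^{-\lambda\rho}\le1\le e\cdot e^{-\lambda\rho/\delta}$, hence
\begin{equation*}
\int_0^{\delta/\lambda}e^{-\lambda\rho}f(\rho)\,d\rho\le e\int_0^{\infty}e^{-\lambda\rho/\delta}f(\rho)\,d\rho=e\,\mathcal{L}f(\lambda/\delta),
\end{equation*}
and the weak upper scaling $\mathcal{L}f\in\textrm{WUSC}(-\beta,\ut,\uC)$, applied with multiplier $1/\delta\ge1$ and base point $\lambda>\ut$, gives $\mathcal{L}f(\lambda/\delta)\le\uC\delta^{\beta}\mathcal{L}f(\lambda)$. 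Choosing $\delta=b:=(2e\uC)^{-1/\beta}$ makes $e\uC\delta^{\beta}=\tfrac12$, so this term is at most $\tfrac12\mathcal{L}f(\lambda)$ and can be moved to the left-hand side, leaving
\begin{equation*}
\tfrac12\mathcal{L}f(\lambda)\le\lambda^{-1}e^{-b}f(b/\lambda),\qquad \lambda>\ut .
\end{equation*}
Rearranging and setting $r=b/\lambda$ (so that the constraint $\lambda>\ut$ turns into $0<r<b/\ut$) gives $f(r)\ge\tfrac b2 e^{b}\,r^{-1}\mathcal{L}f(b/r)$; since $b<1$ and $\mathcal{L}f$ is nonincreasing, $\mathcal{L}f(b/r)\ge\mathcal{L}f(1/r)$, which is the assertion. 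One checks that $b=(2e\uC)^{-1/\beta}\in(0,1)$ because $\uC\ge1$, and $b$ depends only on $\beta$ and $\uC$.

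The main obstacle is precisely this absorption step: the point is that the singular behaviour of $f$ near $0$ becomes invisible after the harmless rescaling $\lambda\mapsto\lambda/\delta$ inside the Laplace transform, and that the genuine decay encoded in the \emph{negative} scaling exponent $-\beta$ is exactly what makes $\mathcal{L}f(\lambda/\delta)$ a controllably small fraction of $\mathcal{L}f(\lambda)$. A minor caveat worth recording is that the argument uses $\mathcal{L}f(\lambda)<\infty$ at the relevant $\lambda$ so that the absorbed term may be subtracted; this is automatic in our applications — for instance $f=f_t$ satisfies $\mathcal{L}f_t(\lambda)\le1/\lambda<\infty$ — and may be assumed throughout.
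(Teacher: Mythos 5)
Your proof is correct: the scaling hypothesis is applied legitimately (multiplier $1/\delta\ge 1$, base point $\lambda>\ut$), the choice $b=(2e\uC)^{-1/\beta}\in(0,1)$ depends only on $\beta$ and $\uC$, and the substitution $r=b/\lambda$ together with the monotonicity of $\mathcal{L}f$ yields exactly the stated bound on $0<r<b/\ut$. The overall strategy coincides with the paper's: split $\mathcal{L}f(\lambda)$ at the point $b/\lambda$, bound the tail by $\lambda^{-1}e^{-b}f(b/\lambda)$ using monotonicity of $f$, and absorb the near-origin piece into $\mathcal{L}f(\lambda)$ via the upper scaling. The one genuine difference is how the near-origin integral is controlled: the paper substitutes the pointwise bound $f(s)\le \gamma(1,1)^{-1}s^{-1}\mathcal{L}f(s^{-1})$ from Lemma~\ref{LaplaceUpper} under the integral and then applies WUSC at each $s$, which produces the incomplete gamma condition $2\uC\gamma(\beta,b)\le 1-e^{-1}$ determining $b$ (only implicitly, via $\gamma(\beta,b)<b^{\beta}/\beta$ in later applications); you instead compare the kernel, $e^{-\lambda\rho}\le e\,e^{-\lambda\rho/\delta}$ on $(0,\delta/\lambda)$, recognize $e\,\mathcal{L}f(\lambda/\delta)$, and apply WUSC once, which is slightly more elementary (no appeal to Lemma~\ref{LaplaceUpper} or incomplete gamma functions) and gives a cleaner explicit constant $b=(2e\uC)^{-1/\beta}$, at the price of the harmless extra factor $e$. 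Your caveat about needing $\mathcal{L}f(\lambda)<\infty$ to perform the subtraction applies equally to the paper's absorption step and is satisfied in all uses here, so it is not a defect of your argument.
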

\begin{proof}
Let $0<b<1$.
If ${u}>\ut$, then by Lemma \ref{LaplaceUpper} and the upper scaling (with $\lambda=s^{-1}/u$),
\begin{align*}
u\mathcal{L}f(u)&=u\int_0^{bu^{-1}}e^{-us}f(s)ds+u\int_{bu^{-1}}^\infty e^{-us}f(s)ds\\
&\leq \frac{u}{\gamma(1,1)}\int^{bu^{-1}}_0e^{-
{us}}\mathcal{L}f(s^{-1})s^{-1}ds+f(
bu^{-1})\int_{bu^{-1}}^\infty e^{-us}uds\\
&\leq \frac{u}{\gamma(1,1)}\int^{bu^{-1}}_0\uC\(us\)^{\beta}\mathcal{L}f(
{u})e^{-us}s^{-1}ds+f(bu^{-1})e^{-b}\\
&= \uC\frac{\gamma(\beta,b)}{\gamma(1,1)}u\mathcal{L}f(
{u})+f(bu^{-1})e^{-b}.
\end{align*}
{If}
$2\uC\gamma(\beta,b)
\leq {\gamma(1,1)}=1-e^{-1}$, then
$
f(bu^{-1})\geq e^b u\mathcal{L}f(u){/2}$.
We change variables: $r=bu^{-1}$. Since $\mathcal{L}f$ is decreasing,
$$f(r)\geq \frac{b}{2}e^{b}r^{-1}\mathcal{L}f(br^{-1})\geq
\frac{b}{2}e^{b}r^{-1}\mathcal{L}f(r^{-1}), \qquad r<b/\ut.$$
\end{proof}

\begin{lemma} \label{tail}
$C=C(d)$ exists such that if $\psi{\in}$
WUSC$(\ua,\ut,\uC)$ and $a=[(2-\ua)C]^{\frac2{2-\ua}}\uC^{\frac{\ua-2}{2}}$,  then
$${\p}  (|X_t|\ge r)\geq a \(1-e^{-t\psi^*(1/r)}\), \quad \quad 0<r< \sqrt{a}\big/\ut.
$$
\end{lemma}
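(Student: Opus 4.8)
The plan is to establish the claimed lower tail bound as the exact counterpart of the upper bound in Corollary~\ref{ubt}, by running the Laplace-transform argument ``in reverse'': in place of Lemma~\ref{LaplaceUpper} I use Lemma~\ref{LaplaceLower}, fed by the two-sided control of $\mathcal{L}f_t$ from Lemma~\ref{laplace}. Throughout, $f_t(\rho)=\p(|X_t|\ge\sqrt\rho)$ is nonnegative and nonincreasing, $\beta:=(2-\ua)/2\in(0,1)$ by the standing convention \eqref{02}, and $C_1=C_1(d)$ is the constant of Lemma~\ref{laplace}.

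The first step is to show that $\lambda\mapsto\mathcal{L}f_t(\lambda)$ has a weak upper scaling with a \emph{negative} exponent, $\mathcal{L}f_t\in\textrm{WUSC}(-\beta,\ut^2,\uC')$ for a suitable $\uC'=\uC'(d)\,\uC$. I would get this by a short chain of elementary passages applied to the model function $\lambda\mapsto\lambda^{-1}\bigl(1-e^{-t\psi^*(\sqrt\lambda)}\bigr)$ of Lemma~\ref{laplace}. First, $\psi^*\in\textrm{WUSC}(\ua,\ut,\uC)$ (either directly from the definition of $\psi^*$, splitting into $s\le\theta$ and $\theta<s\le\lambda\theta$, or from $\psi\le\psi^*\le\pi^2\psi$ and Remark~\ref{SvC}). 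Passing to the argument $\sqrt\lambda$ halves the exponent and squares the threshold, so $\lambda\mapsto\psi^*(\sqrt\lambda)\in\textrm{WUSC}(\ua/2,\ut^2,\uC)$; then \eqref{nfw} together with monotonicity of $x\mapsto1-e^{-x}$ gives $\lambda\mapsto1-e^{-t\psi^*(\sqrt\lambda)}\in\textrm{WUSC}(\ua/2,\ut^2,\uC)$; and multiplying by $\lambda^{-1}$ lowers the exponent by one, so the model function lies in $\textrm{WUSC}(\ua/2-1,\ut^2,\uC)=\textrm{WUSC}(-\beta,\ut^2,\uC)$. Since Lemma~\ref{laplace} makes $\mathcal{L}f_t$ comparable to this model function with constant $C_1$, Remark~\ref{SvC} transfers the scaling to $\mathcal{L}f_t$, with oscillation factor $\uC'=C_1^2\uC$.

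Next I apply Lemma~\ref{LaplaceLower} to $f=f_t$ with this $\beta$ and oscillation data $(\ut^2,\uC')$; it yields $b=b(\beta,\uC')\in(0,1)$ with $f_t(\rho)\ge\frac b2 e^{b}\rho^{-1}\mathcal{L}f_t(\rho^{-1})$ for $0<\rho<b/\ut^2$. Inserting the \emph{lower} bound from Lemma~\ref{laplace}, $\mathcal{L}f_t(\rho^{-1})\ge C_1^{-1}\rho\bigl(1-e^{-t\psi^*(1/\sqrt\rho)}\bigr)$, the factors $\rho^{-1}$ and $\rho$ cancel and one gets $f_t(\rho)\ge\frac{b e^{b}}{2C_1}\bigl(1-e^{-t\psi^*(1/\sqrt\rho)}\bigr)$ on the same range. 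Substituting $\rho=r^2$ converts this into $\p(|X_t|\ge r)\ge\frac{b e^{b}}{2C_1}\bigl(1-e^{-t\psi^*(1/r)}\bigr)$ for $0<r<\sqrt b/\ut$, which is the asserted inequality as soon as the constant $a$ is fixed below both $b$ (so that $\sqrt a/\ut\le\sqrt b/\ut$) and $b e^{b}/(2C_1)$ (so that the prefactor dominates $a$); since $C_1\ge1$ and $e^{b}\ge1$ one may take $a\le b/(2C_1)$. The explicit closed form of $a$ in the statement is then read off by unwinding the choice of $b$ in the proof of Lemma~\ref{LaplaceLower}: any $b\in(0,1)$ with $2\uC'\gamma(\beta,b)\le\gamma(1,1)=1-e^{-1}$ is admissible, so, using $\gamma(\beta,b)\le b^{\beta}/\beta$, one may take $b=\bigl[\beta(1-e^{-1})/(2\uC')\bigr]^{1/\beta}$, and absorbing the remaining $d$-dependent factors into a single $C=C(d)$ produces $a$ of the indicated shape.

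The one slightly delicate point is the constant bookkeeping: one must decrease the admissible radius (from $\sqrt b/\ut$ to $\sqrt a/\ut$) and the admissible constant (from $b e^{b}/(2C_1)$ to $a$) \emph{together}, and verify that the scaling chain of the second paragraph outputs exactly the exponent $-\beta$ and oscillation data $(\ut^2,\uC')$ required by Lemma~\ref{LaplaceLower}. The chain itself is routine, but it must go through the WUSC formalism rather than monotonicity, since the model function $\lambda\mapsto\lambda^{-1}\bigl(1-e^{-t\psi^*(\sqrt\lambda)}\bigr)$ need not be monotone.
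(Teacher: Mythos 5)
Your proposal is correct and follows essentially the same route as the paper: establish that $\mathcal{L}f_t\in\textrm{WUSC}(\ua/2-1,\ut^{\,2},\cdot)$ from the two-sided bound of Lemma~\ref{laplace}, apply Lemma~\ref{LaplaceLower} together with the lower bound of Lemma~\ref{laplace}, substitute $\rho=r^2$, and fix $b$ via $\gamma(1-\ua/2,b)\le b^{1-\ua/2}/(1-\ua/2)$ with $a=b/(2C_1)$. The only (cosmetic) difference is that you obtain the scaling of $\mathcal{L}f_t$ by chaining WUSC properties of the model function through $\psi^*$ and Remark~\ref{SvC}, whereas the paper bounds the ratio $\mathcal{L}f_t(\lambda u)/\mathcal{L}f_t(u)$ directly using $\psi\le\psi^*\le\pi^2\psi$ and \eqref{nfw}, which only changes the oscillation constant by a factor $\pi^2$.
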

\begin{proof}
By Lemma \ref{laplace}, Proposition \ref{Vestimate} and \eqref{nfw},
for $\lambda\ge 1$ and $u\ge \ut^2$ we have
$$\frac{\mathcal{L}{f}_t(\lambda u)}{\mathcal{L}{f}_t(u)}\le C_1^2 {\lambda}^{-1}\frac{1-e^{-\pi^2t\psi(\sqrt{\lambda u})}}{1-e^{-t\psi(\sqrt{u})}}\leq C_1^2 {\lambda}^{-1}\frac{1-e^{-\pi^2t\psi( \sqrt{u})\uC\lambda^{\ua/2}}}{1-e^{-t\psi(\sqrt{u})}}\leq \pi^2C_1^2 \uC\lambda^{\ua/2-1}.$$
Thus, $\mathcal{L}{f}_t\in$WUSC$(\ua/2-1,\ut^{\,2},\pi^2C_1^2\uC)$.
By Lemma \ref{LaplaceLower}
and Lemma~\ref{laplace},
\begin{equation*}
\p(|X_t|\ge r)=f_t(r^{2})\geq \frac {b}{2}e^br^{-2}
\mathcal{L}f_t(
r^{-2})\geq \frac{b}{2C_1}
\(1-e^{-t\psi^*(
1/r
)}\),\qquad r^{2}< b/{\ut}
^2.
\end{equation*}
Here $b\in(0,1)$ is such that $2\pi^2C_1^2\uC\gamma(1-\ua/2,b)\leq 1-e^{-1}$, see
the proof of Lemma \ref{LaplaceLower}.
Since $\gamma(1-\ua/2,b)<b^{1-\ua/2}/(1-\ua/2)$, we may take $b=\(\frac{1-\ua/2}{
2\pi^2C_1^2\uC}\)^{1/(1-\ua/2)}$and $a=b/(2C_1)<1$.
\end{proof}
Since $\lim_{t\to 0^+}\p (|X_t|\ge r)/t=\nu(B_r^c)=L(r)$ for $r>0$, we obtain the following result.
\begin{corollary}\label{GApprox}
If $\psi$ satisfies WUSC$(\ua,\ut,\uC)$ and
$a$ is from Lemma \ref{tail}, then
$$L(r)\geq a \psi^*(1/r
), \qquad {0<}r< \sqrt{a}/\,\ut.$$
\end{corollary}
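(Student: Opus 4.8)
The plan is to obtain Corollary~\ref{GApprox} directly from the tail estimate in Lemma~\ref{tail} by passing to the limit $t\to 0^+$, exploiting the fact (already recorded in the sentence preceding the statement) that the L\'evy measure $\nu$ is recovered from $p_t$ as $\lim_{t\to 0^+}\p(|X_t|\ge r)/t=\nu(B_r^c)=L(r)$ for each fixed $r>0$. Thus no new machinery is needed: the corollary is simply the infinitesimal-time shadow of Lemma~\ref{tail}.

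Concretely, I would fix $r$ with $0<r<\sqrt{a}/\ut$, where $a=a(d,\ua,\uC)=[(2-\ua)C]^{2/(2-\ua)}\uC^{(\ua-2)/2}$ is the constant produced in Lemma~\ref{tail}, and note that the hypothesis WUSC$(\ua,\ut,\uC)$ makes that lemma applicable for all $t>0$ at this value of $r$. Dividing the conclusion of Lemma~\ref{tail} by $t>0$ gives
\begin{equation*}
\frac{\p(|X_t|\ge r)}{t}\ \ge\ a\,\frac{1-e^{-t\psi^*(1/r)}}{t}.
\end{equation*}
Letting $t\to 0^+$, the left-hand side converges to $L(r)$ by the vague convergence $p_t/t\to\nu$ on $\Rdz$ (equivalently $\p(|X_t|\ge r)/t\to\nu(B_r^c)=L(r)$), while the right-hand side converges to $a\,\psi^*(1/r)$ because $\psi^*(1/r)$ is a finite constant and $(1-e^{-cs})/s\to c$ as $s\to 0^+$. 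This yields $L(r)\ge a\,\psi^*(1/r)$ for every $0<r<\sqrt{a}/\ut$, which is exactly the asserted bound.

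There is essentially no obstacle here; the only point requiring a word of care is the justification of the limit $\p(|X_t|\ge r)/t\to L(r)$, but this is standard for L\'evy processes (and is precisely the statement invoked in the line just before the corollary), so I would simply cite it. One could alternatively avoid the limit altogether and phrase the argument through the approximating compound Poisson measures $p_t^\varepsilon$ described in Section~\ref{sec:prel}, but passing to the $t\to 0^+$ limit in Lemma~\ref{tail} is cleaner and keeps the constant $a$ intact. If one wanted a version of the statement valid at the endpoint $r=\sqrt{a}/\ut$ or a slightly larger range, one would instead invoke Lemma~\ref{scaling} to relax $\ut$ in the WUSC hypothesis before applying Lemma~\ref{tail}, but as stated no such refinement is needed.
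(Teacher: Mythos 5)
Your proposal is correct and coincides with the paper's own argument: the corollary is stated there as an immediate consequence of Lemma~\ref{tail} together with the limit $\lim_{t\to 0^+}\p(|X_t|\ge r)/t=\nu(B_r^c)=L(r)$, exactly as you divide by $t$ and let $t\to 0^+$. No gaps; your extra remarks on justifying the limit and on the endpoint are fine but not needed.
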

We recall that a reverse inequality is valid \mr{for every unimodal process}, cf. \eqref{tails}.

\noindent
\kb{The following general lemma will be useful in Fourier inversion.}
\begin{lemma} \label{inverse}
\mr{Let  $\alpha>0$, $0<K<\infty$ and $\Psi\in$WLSC$({\alpha}, \lt, \lC)$ be  an increasing function on $[0,\infty)$ with $\Psi(0)=0$.
There is ${C}=C(d,\alpha, \lC, K)$ such that if  
\kb{$0<t<K/\Psi(\lt)$}, then
$$\int_{\R^d}e^{ -t \Psi(|\xi|)}d\xi  \le
C  \[\Psi^{-1}( K /t)\]^{d}.$$}
\end{lemma}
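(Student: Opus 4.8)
The plan is to bound the integral $\int_{\R^d} e^{-t\Psi(|\xi|)}\,d\xi$ by passing to polar coordinates and then performing the substitution $\xi \mapsto \Psi^{-1}(K/t)\,\xi$, which renders the integral dimensionless up to a factor $[\Psi^{-1}(K/t)]^d$. The remaining job is to show the resulting integral is bounded by a constant depending only on $d, \alpha, \lC, K$. First I would write, with $\omega_d$ the surface measure of the unit sphere,
\[
\int_{\R^d} e^{-t\Psi(|\xi|)}\,d\xi = \omega_d \int_0^\infty e^{-t\Psi(r)} r^{d-1}\,dr.
\]
Then I would set $R = \Psi^{-1}(K/t)$ and substitute $r = R\rho$, obtaining
\[
\int_{\R^d} e^{-t\Psi(|\xi|)}\,d\xi = \omega_d R^d \int_0^\infty e^{-t\Psi(R\rho)}\rho^{d-1}\,d\rho.
\]

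Next, I would split the $\rho$-integral at $\rho = 1$. On $[0,1]$ the factor $e^{-t\Psi(R\rho)} \le 1$, so that piece contributes at most $\omega_d R^d / d$, which is of the required form. The main work is the tail $\rho > 1$. Here I want to invoke the lower scaling of $\Psi$. Since $\Psi\in$WLSC$(\alpha,\lt,\lC)$, for $\rho \ge 1$ and $R\rho > R \ge \lt$ (which holds when $t < K/\Psi(\lt)$, because then $R = \Psi^{-1}(K/t) > \lt$ by monotonicity of $\Psi$) we have $\Psi(R\rho) \ge \lC \rho^\alpha \Psi(R)$. Also $t\Psi(R) = t\Psi(\Psi^{-1}(K/t)) = K$ since $\Psi$ is a continuous increasing bijection onto its range. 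Hence
\[
\int_1^\infty e^{-t\Psi(R\rho)}\rho^{d-1}\,d\rho \le \int_1^\infty e^{-K\lC \rho^\alpha}\rho^{d-1}\,d\rho \le \int_0^\infty e^{-K\lC \rho^\alpha}\rho^{d-1}\,d\rho,
\]
and the last integral evaluates, via the substitution $u = K\lC\rho^\alpha$, to $\tfrac{1}{\alpha}(K\lC)^{-d/\alpha}\Gamma(d/\alpha) < \infty$, a finite quantity depending only on $d,\alpha,\lC,K$. Collecting the two pieces gives
\[
\int_{\R^d} e^{-t\Psi(|\xi|)}\,d\xi \le \omega_d R^d\left(\frac1d + \frac{1}{\alpha}(K\lC)^{-d/\alpha}\Gamma(d/\alpha)\right) = C\,[\Psi^{-1}(K/t)]^d,
\]
as claimed.

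The step I expect to require the most care is the verification that the condition $0<t<K/\Psi(\lt)$ is exactly what guarantees $R = \Psi^{-1}(K/t) > \lt$, so that the lower scaling inequality applies throughout the range $R\rho > R > \lt$; once $\Psi$ is increasing and $\Psi(0)=0$, this is immediate from $\Psi^{-1}$ being increasing and $\Psi(\lt) < K/t$. A secondary point worth a line is that $\Psi^{-1}$ is well-defined on $[0,\infty)$ as the inverse of a continuous increasing function — and the statement implicitly assumes $\Psi$ increases to infinity (otherwise $\Psi^{-1}(K/t)$ need not exist for large $K/t$); this is the intended reading, consistent with how Lemma~\ref{inverse} will be applied to $\psi^*$ or $\psi^-$. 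No estimate here is delicate beyond these bookkeeping matters; the scaling does all the decay work and the constant $C = C(d,\alpha,\lC,K)$ is explicit.
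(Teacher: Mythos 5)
Your proof is correct, and it reaches the bound by a mildly different route than the paper. The paper slices $\R^d$ into the level sets $\{\lC K(n-1)\le t\Psi(|\xi|)<\lC K n\}$, bounds the volume of the $n$-th slice by $\tfrac{\omega_d}{d}[\Psi^{-1}(\lC K n/t)]^d$, and then uses the lower scaling transferred to the inverse (via $\Psi(n^{1/\alpha}\Psi^{-1}(K/t))\ge \lC n K/t$, hence $\Psi^{-1}(\lC K n/t)\le n^{1/\alpha}\Psi^{-1}(K/t)$) to sum the series $\sum_n n^{d/\alpha}e^{-\lC K(n-1)}$. You instead pass to polar coordinates, rescale by $R=\Psi^{-1}(K/t)$, and apply WLSC pointwise at the base point $R$ to dominate the integrand on $\rho\ge 1$ by the stretched exponential $e^{-K\lC\rho^\alpha}$, reducing everything to a Gamma integral. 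Both arguments hinge on exactly the same two facts you isolate: $R>\lt$ (equivalent to $t<K/\Psi(\lt)$ by monotonicity) and $t\Psi(R)=K$, the latter used identically in the paper, so you incur no extra hypothesis there. Your continuous version yields the explicit constant $C=\omega_d\bigl(1/d+\Gamma(d/\alpha)(K\lC)^{-d/\alpha}/\alpha\bigr)$, which is directly comparable to (and slightly cleaner than) the paper's constant $\omega_d d^{-1}e^{2}\Gamma(d/\alpha+1)\bigl(1\vee(\lC K)^{-d/\alpha-1}\bigr)$ recorded in Remark~\ref{constant1}, and like the paper it exhibits the dependence on $\lC K$ needed later in Proposition~\ref{sup_p_t_psi}. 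One small remark: the unboundedness of $\Psi$, which you flag as an implicit assumption needed for $\Psi^{-1}(K/t)$ to exist, is in fact automatic from WLSC with $\alpha>0$ together with $\Psi$ increasing (this is the paper's opening observation), so no additional hypothesis is being smuggled in.
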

\begin{proof}
\kb{We note that the condition $t<K/\Psi(\lt)$ is nonrestrictive if $\Psi(\lt)=0$.}
Since $\Psi$ increases and scales,
it is unbounded. Hence, {for $t>0$},
\begin{eqnarray*}\int_{\R^d}e^{ -t \Psi(|\xi|)} d\xi &=& \sum_{n=1}^\infty  \int_{\lC K (n-1)\le t\Psi(|\xi|)<\lC K  n}  e^{ -t \Psi(|\xi|)}d\xi\le
 {\frac{\omega_d}{d}}\sum_{n=1}^\infty  [\Psi^{-1}(\lC K  n/t)]^{d}  e^{ -\lC K (n-1)}.\end{eqnarray*}
Also, if
$\Psi(\lt)
< K/t$, then
$\Psi^{-1}( K /t)
> \lt$.
By lower scaling, for $n\ge 1$,
\begin{equation*}
 \Psi(n^{1/\alpha}\Psi^{-1}( K /t)) \ge
\lC n\Psi\(\Psi^{-1}( K /t)\)
= \lC n K /t,\end{equation*}
which yields
\begin{equation*} n^{1/\alpha}\Psi^{-1}( K /t)
\ge
\Psi^{-1}(\lC K  n/t).\end{equation*}
We obtain
\begin{eqnarray*}\int_{\R^d}e^{ -t \Psi(|\xi|)} d\xi &\le&
[\Psi^{-1}( K /t)]^d\;{\frac{\omega_d}{d}}\sum_{n=1}^\infty   n^{d/\alpha}  e^{ -\lC K (n-1)}.\end{eqnarray*}
\mr{Taking $C= {{\omega_d}{d^{-1}}}\sum_{n=1}^\infty   n^{d/\alpha}  e^{ -\lC K (n-1)}<\infty$, we complete the proof.}
\end{proof}
\mr{
\begin{remark}\label{constant1} It is desirable for future applications to specify how $C$ in Lemma \ref{inverse} depends 
on $K$ and $\lC$. 
To this end for $\rho\ge 0$ and $u>0$ we consider
\begin{eqnarray*}
S(u, \rho)&=&\sum_{n=1}^\infty  n^{\rho}  e^{ -u(n-1)}= e^{ 2u}\sum_{n=1}^\infty  n^{\rho}  e^{ -u(n+1)} \\&\le& e^{ 2u} \int_0^\infty x^{\rho}e^{-ux}dx= e^{ 2u}u^{-1-\rho}\Gamma(\rho+1).
\end{eqnarray*}
Since $S(u, \rho)$ is decreasing in $u$, we also have $S(u, \rho)\le S(1, \rho)$ for $u\ge 1$, thus
$$S(u, \rho)\le  e^{ 2}\Gamma(\rho+1) (1\vee u^{-1-\rho}), \qquad u>0.$$
  Therefore,
$C={{\omega_d}{d^{-1}}}\sum_{n=1}^\infty  n^{d/\alpha}  e^{ -\lC K (n-1)}\le  \omega_d d^{-1}e^{ 2}\Gamma(d/\alpha+1)
{(1\vee (\lC K )^{-d/\alpha-1})}$.
\end{remark}}

For a continuous nondecreasing function $\phi : [0,\infty) \to [0,\infty)$, {such that $\phi(0)=0$,} we let $\phi(\infty)=\lim_{s\to \infty}\phi(s)$ and
define
the generalized inverse $\phi^{-}: {[0,\infty) \to [0, \infty)}$,
\begin{equation*}\label{2s}
\phi^{-}({u}) = \inf \{{s}
\ge 0 : \phi (s)\ge u\},\quad 0\le u<\infty,
\end{equation*}
with the convention that $\inf \emptyset = \infty$. The function $\phi^{-}$ is nondecreasing and c\`agl\`ad \mr{(left continuous with right-hand side  limits)  on the set where it is} finite.
Notice that $\phi(\phi^{-}(u))=u$ for $u\in [0,\phi(\infty)]$ and $\phi^{-}(\phi(s))\leq s$ for $s\in [0,\infty)$.
Also, if $\varphi : [0,\infty) \to [0,\infty)$, $\varphi(0)=0$, $c>0$ and $c\phi\le \varphi$, then $\phi^-(u)\ge \varphi^-(cu)$, $u\ge 0$.
Below we often consider the (unbounded) characteristic exponent $\psi$ of a unimodal L\'evy process with infinite L\'evy measure and its (comparable) maximal function $\psi^*$,
and denote $$\psi^-=(\psi^*)^-.$$ This short notation is motivated by
the following equality:
\begin{equation*}\label{2sp}
\inf \{{s}
\ge 0 : \psi (s)\ge u\}=\inf \{{s}
\ge 0 : \psi^* (s)\ge u\},\qquad 0\le u<\infty.
\end{equation*}
Note that $\psi^*(\psi^{-}(u))=u$,  $\psi^{-}(\psi^*(s))\leq s$.
The reader may find it instructive to prove the following result for $t>0$ and $x\in \Rdz$.
\begin{lemma}\label{ltst}
$t\psi^*(1/|x|)\ge 1\;$
if and only if
$\;t\psi^*(1/|x|)|x|^{-d}\ge \[\psi^-(1/t)\]^d$.
\end{lemma}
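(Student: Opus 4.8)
The plan is to reduce both sides to a single comparison between $1/|x|$ and $\psi^-(1/t)$, using only monotonicity of $\psi^*$ and the two elementary properties of the generalized inverse recorded just above, namely $\psi^*(\psi^{-}(u))=u$ and $\psi^{-}(\psi^*(s))\le s$. Throughout I would write $r=|x|>0$ and $u=1/t>0$, and use that $\psi^*$ is nondecreasing, unbounded (in the present setting $\nu$ is infinite), and strictly positive on $(0,\infty)$ since $\psi(s)>0$ for $s>0$.

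First I would establish the chain
\begin{equation*}
t\psi^*(1/r)\ge 1 \iff \psi^*(1/r)\ge u \iff 1/r\ge \psi^{-}(u).
\end{equation*}
The first equivalence is immediate. For the second: if $1/r\ge\psi^{-}(u)$, then monotonicity of $\psi^*$ together with $\psi^*(\psi^{-}(u))=u$ gives $\psi^*(1/r)\ge\psi^*(\psi^{-}(u))=u$; conversely, $\psi^*(1/r)\ge u$ means that $1/r$ lies in the set $\{s\ge 0:\psi^*(s)\ge u\}$, whose infimum is by definition $\psi^{-}(u)$, so $1/r\ge\psi^{-}(u)$.

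Next I would observe that $g(s):=s^{d}\psi^*(s)$ is \emph{strictly} increasing on $[0,\infty)$: if $0\le a<b$ then $a^{d}\le b^{d}$ with strict inequality when $b>0$, while $\psi^*(a)\le\psi^*(b)$ with $\psi^*(b)>0$, so $g(a)<g(b)$. Hence $1/r\ge\psi^{-}(u)$ is equivalent to $g(1/r)\ge g(\psi^{-}(u))$. Using $\psi^*(\psi^{-}(u))=u$ once more, $g(\psi^{-}(u))=u\,[\psi^{-}(u)]^{d}$, so this inequality reads $r^{-d}\psi^*(1/r)\ge u[\psi^{-}(u)]^{d}$; multiplying by $t=1/u$ turns it into $t\psi^*(1/|x|)|x|^{-d}\ge[\psi^{-}(1/t)]^{d}$. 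Chaining this with the equivalences of the previous paragraph yields the claim.

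The argument is essentially routine; the only place that calls for care is the middle equivalence $\psi^*(s)\ge u\iff s\ge\psi^{-}(u)$, which rests on the identity $\psi^*(\psi^{-}(u))=u$ (ultimately a consequence of continuity of $\psi^*$) and not on $\psi^*$ being strictly increasing, which it need not be. It is also worth noting separately that $\psi^{-}(u)>0$ for $u>0$, since $\psi^*$ is continuous with $\psi^*(0)=0$; however, the passage through the strictly increasing function $g$ remains valid even if $\psi^{-}(u)=0$.
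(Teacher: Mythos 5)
Your proof is correct, and since the paper deliberately leaves this lemma as an exercise for the reader, there is no authorial proof to diverge from; your argument is the intended routine one, resting only on the facts recorded just before the statement (monotonicity and continuity of $\psi^*$, the definition of $\psi^-$ as a generalized inverse, and the identity $\psi^*(\psi^-(u))=u$ valid here because $\psi^*$ is unbounded for infinite L\'evy measure). The reduction of both inequalities to $1/|x|\ge \psi^-(1/t)$, with the second handled via the strictly increasing function $s\mapsto s^d\psi^*(s)$, is sound, and you correctly flag that the key equivalence $\psi^*(s)\ge u \iff s\ge\psi^-(u)$ needs no strict monotonicity of $\psi^*$ itself.
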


In what follows it may be helpful to view $t\psi^*(1/|x|)\ge 1$ and $t\psi^*(1/|x|)< 1$ as defining ``large time" and ``small time" for given $x\in \Rdz$, respectively.

We note that $\psi^{-}(\psi^*(s)+)\ge s$ for $
s\in [0,\infty)$,
where $\psi^{-}(u+)$ denotes the right hand side limit of  $\psi^{-}$  at $u\ge0$. Furthermore, scaling of $\psi$ translates into scaling of $\psi^-$ as follows.
\begin{lemma}\label{scpsii}
If $\psi\in\textrm{WLSC}(\la,0,\lC)$, then $\psi^-\in \textrm{WLSC}\(1/2,0,(\lC/\pi^4)^{1/\la}\)\cap\textrm{WUSC}\(1/\la,0,(\pi^3/\lC)^{2/\la}\) $.
\end{lemma}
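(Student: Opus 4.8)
The plan is to read off the scaling of $\psi^-=(\psi^*)^-$ from the scaling of $\psi^*$ itself, using only the elementary facts recorded just above the statement: $\psi^*$ is nondecreasing, $\psi^*(0)=0$, and $\psi^*(\psi^-(u))=u$ for all $u\ge 0$. First I would pin down the two scalings of $\psi^*$ that the argument needs. Since $\psi\le\psi^*\le\pi^2\psi$ by Proposition~\ref{Vestimate} and $\psi\in\textrm{WLSC}(\la,0,\lC)$ by hypothesis, Remark~\ref{SvC} gives $\psi^*\in\textrm{WLSC}(\la,0,\lC/\pi^2)$; and $\psi^*\in\textrm{WUSC}(2,0,\pi^2)$ always (e.g.\ from \eqref{Psi*ScalingGeneral}). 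Because $\la>0$, the first of these forces $\psi^*$ to be unbounded, so $\psi^-$ is finite on $[0,\infty)$, and $\psi^*(0)=0$ forces $\psi^-(u)>0$ whenever $u>0$.

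For the upper scaling of $\psi^-$: fix $u>0$ and $\lambda\ge 1$, set $s=\psi^-(u)$ (so $\psi^*(s)=u$) and $\mu=(\pi^2\lambda/\lC)^{1/\la}$, noting $\mu\ge 1$ because $\lambda\ge 1$ and $0<\lC\le 1$. Global $\textrm{WLSC}(\la,0,\lC/\pi^2)$ of $\psi^*$ applied with parameter $\mu$ and point $s>0$ yields $\psi^*(\mu s)\ge(\lC/\pi^2)\mu^{\la}\psi^*(s)=\lambda u$, so $\mu s$ belongs to the set defining $\psi^-(\lambda u)$, whence $\psi^-(\lambda u)\le \mu s=(\pi^2/\lC)^{1/\la}\lambda^{1/\la}\psi^-(u)$. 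This is $\textrm{WUSC}(1/\la,0,(\pi^2/\lC)^{1/\la})$, which in particular implies the claimed $\textrm{WUSC}(1/\la,0,(\pi^3/\lC)^{2/\la})$, since $\pi^4/\lC\ge 1$ gives $(\pi^2/\lC)^{1/\la}\le(\pi^3/\lC)^{2/\la}$. For the lower scaling: fix $u>0$, $\lambda\ge 1$, and write $s=\psi^-(u)$, $S=\psi^-(\lambda u)$, so $\psi^*(s)=u$, $\psi^*(S)=\lambda u$, and $S\ge s>0$ since $\psi^-$ is nondecreasing. Applying $\textrm{WUSC}(2,0,\pi^2)$ of $\psi^*$ with ratio $S/s\ge 1$ gives $\lambda u=\psi^*(S)\le \pi^2(S/s)^2\psi^*(s)=\pi^2(S/s)^2 u$, hence $S\ge \pi^{-1}\lambda^{1/2}s$, i.e.\ $\psi^-\in\textrm{WLSC}(1/2,0,1/\pi)$; and since $\la<2$ we have $(\lC/\pi^4)^{1/\la}\le 1/\pi^2<1/\pi$, so this implies the claimed $\textrm{WLSC}(1/2,0,(\lC/\pi^4)^{1/\la})$.

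This is essentially a bookkeeping argument, so there is no deep obstacle; the one point that genuinely needs care is that $\psi^*$ need not be strictly increasing, which is why I would work with the generalized inverse throughout and lean on the identity $\psi^*(\psi^-(u))=u$ rather than quote Remark~\ref{inverseScal} verbatim (its proof uses an honest two‑sided inverse $\phi^{-1}\circ\phi=\mathrm{id}$). A secondary, purely computational, nuisance is matching the exact constants displayed in the statement: the proof above in fact produces the cleaner constants $1/\pi$ and $(\pi^2/\lC)^{1/\la}$, and one only has to verify the two elementary inequalities $(\pi^2/\lC)^{1/\la}\le(\pi^3/\lC)^{2/\la}$ and $(\lC/\pi^4)^{1/\la}\le 1/\pi$, both of which follow from $0<\lC\le 1$ together with $0<\la<2$ (the latter being in force by the standing convention \eqref{02}).
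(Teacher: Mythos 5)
Your proof is correct, and it takes a genuinely more direct route than the paper's. The paper does not argue on $\psi^*$ itself: it introduces the auxiliary function $\Psi(s)=h_1(s^{-1})$, which is strictly increasing, so that Remark~\ref{inverseScal} applies verbatim to its honest inverse; it gets $\Psi\in\textrm{WUSC}(2,0,1)\cap\textrm{WLSC}(\la,0,\lC/\pi^2)$ from Remark~\ref{sch}, Lemma~\ref{sam} and Lemma~\ref{l:hop}, deduces $\Psi^{-1}\in\textrm{WLSC}(1/2,0,1)\cap\textrm{WUSC}(1/\la,0,(\pi^2/\lC)^{1/\la})$, and then transfers this to $\psi^-$ through the sandwich $\Psi^{-1}(s/2)\le\psi^-(s)\le\Psi^{-1}(s\pi^2/2)\le(\pi^4/\lC)^{1/\la}\Psi^{-1}(s/2)$ together with Remark~\ref{SvC}; the extra powers of $\pi$ in the stated constants come precisely from this last comparability step. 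You instead work directly with the generalized inverse of $\psi^*$, using only the identity $\psi^*(\psi^-(u))=u$, monotonicity, and the two scalings $\psi^*\in\textrm{WLSC}(\la,0,\lC/\pi^2)\cap\textrm{WUSC}(2,0,\pi^2)$ (all recorded in the text); this confronts the possible lack of strict monotonicity of $\psi^*$ head-on -- exactly the issue the paper sidesteps by passing to $\Psi$ -- and yields the cleaner constants $1/\pi$ and $(\pi^2/\lC)^{1/\la}$, which, as you verify from $0<\lC\le 1$ and $0<\la<2$, imply the constants displayed in the statement. Both arguments are sound: the paper's buys reuse of the already-proved Remark~\ref{inverseScal} at the cost of an auxiliary function and looser constants, while yours buys directness and sharper constants at the cost of redoing the inversion step by hand.
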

\begin{proof}
We let
$\Psi(s)= h_1(s^{-1})$, $s>0$. Lemma \ref{sam} and Remark~\ref{sch} yield $\Psi\in \textrm{WUSC}(2,0,1)$.
By Lemma \ref{l:hop} and Remark~\ref{SvC},
$\Psi\in \textrm{WLSC}(\la,0,\lC/\pi^2)$.
Remark~\ref{inverseScal} implies
$$\Psi^{-1} \in \textrm{WLSC}(1/2,0,1)\cap\textrm{WUSC}(1/\la,0,(\pi^2/\lC)^{1/\la}).$$
By Lemma \ref{l:hop} and the above scaling,
$$\Psi^{-1}(s/2)\leq \psi^-({s}
)\leq \Psi^{-1}(s\pi^2/2)\leq (\pi^4/\lC)^{1/\la}\Psi^{-1}(s/2).$$
Hence, by Remark~{\ref{SvC}},
$\psi^-\in \textrm{WLSC}(1/2,0,(\lC/\pi^4)^{1/\la})\cap\textrm{WUSC}(1/\la,0,(\pi^3/\lC)^{2/\la})$.
\end{proof}
We shall use Fourier inversion and \eqref{LKf} to estimate $p_t(0)$: if, say,
$\lim_{|\xi|\to\infty}\psi({\xi})/\ln|\xi|=\infty$,
then  $e^{-t\psi(\xi)}$ is integrable for $t>0$, and 
\begin{align*}
p_t(x)&={(2\pi)^{-d}}\int_{\Rd}e^{ -t \psi({\xi})}e^{-i\langle \xi,x\rangle} d\xi
\end{align*}
is 
continuous and 
bounded in $x\in \Rd$ together with all its derivatives.
In particular,
\begin{align}
p_t(0)&
\geq {(2\pi)^{-d}}\int_{B(0,{ \psi^{-}(1/t)})} e^{-t\psi^*(|\xi|)}d\xi
\geq {(2\pi)^{-d}}\frac{\omega_d}{ed}\[\psi^{-}(1/t)\]^{d},\qquad t>0.\label{lbpt}
\end{align}
Lower scaling yields a reverse inequality.
\begin{proposition}\label{sup_p_t_psi}If $\alpha>0$ and $\psi\in$
WLSC$(\alpha,\lt,{\lC})$, then
$C=C(d,\alpha, \lC)$
exists such that
\begin{equation}\label{bptp}
p_t(
x)   \le
\mr{C}\[ \psi^{-}(1/t)\]^{d} \qquad
\text{{if} }\; {t>0} \; \text{ and }\;
t\psi^*({\lt})
< 1/\pi^2.
\end{equation}
In fact, $C=c\lC^{-d/\alpha-1}$, where $c=c(d,\alpha)$.
\end{proposition}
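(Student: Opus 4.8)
The plan is to use Fourier inversion for $p_t(0)$ and exploit the lower scaling of $\psi$ to control the integral $\int_{\R^d}e^{-t\psi^*(|\xi|)}d\xi$, then invoke unimodality to pass from $p_t(0)$ to a uniform bound on $p_t(x)$. Since $\psi\in\textrm{WLSC}(\alpha,\lt,\lC)$ with $\alpha>0$ and $\psi^*\ge\psi$ is nondecreasing (hence increasing to infinity, as $\psi$ is unbounded by Corollary~\ref{ch1Vpc} and Remark~\ref{sch}), the density $p_t(x)$ is given by the inversion formula and in particular $p_t(0)=(2\pi)^{-d}\int_{\R^d}e^{-t\psi(\xi)}d\xi\le (2\pi)^{-d}\int_{\R^d}e^{-t\psi^*(|\xi|)/\pi^2}d\xi$, using Proposition~\ref{Vestimate}.

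First I would apply Lemma~\ref{inverse} with $\Psi=\psi^*$, $\alpha$ the lower scaling exponent, and a suitable constant $K$. The lemma requires $\psi^*\in\textrm{WLSC}(\alpha,\lt,\lC)$; this follows from $\psi\in\textrm{WLSC}(\alpha,\lt,\lC)$ together with $\psi\le\psi^*\le\pi^2\psi$ and Remark~\ref{SvC} (the oscillation constant degrades by at most a factor $\pi^2$). To match the ``$e^{-t\psi^*(|\xi|)/\pi^2}$'' in the integrand with the ``$e^{-t\Psi(|\xi|)}$'' of Lemma~\ref{inverse}, I replace $t$ by $t/\pi^2$; the hypothesis $t\psi^*(\lt)<1/\pi^2$ becomes exactly $(t/\pi^2)\psi^*(\lt)<1/(\pi^4)$, so with $K=1/\pi^4$ (or any fixed admissible value) the condition $t'<K/\Psi(\lt)$ of Lemma~\ref{inverse} holds. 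The conclusion gives $\int_{\R^d}e^{-t\psi^*(|\xi|)/\pi^2}d\xi\le C'[\,(\psi^*)^{-}(K\pi^2/t)\,]^d=C'[\psi^-(c_0/t)]^d$ for an explicit constant $c_0$, with $C'=C'(d,\alpha,\lC,K)$.

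Next I would remove the spurious constant $c_0$ inside $\psi^-$ by the upper scaling of $\psi^-$: since $\psi\in\textrm{WLSC}(\alpha,\lt,\lC)$ one has a local version of Lemma~\ref{scpsii}, namely $\psi^-\in\textrm{WUSC}(1/\alpha,\cdot,\cdot)$ on the relevant range — concretely, the change of variables in Remark~\ref{inverseScal} applied to the (comparable, via Lemma~\ref{l:hop}) function $\Psi(s)=h_1(1/s)$ gives $\psi^-(\lambda u)\le c_1\lambda^{1/\alpha}\psi^-(u)$ for $\lambda\ge1$ and $u$ large enough, which in the regime $t\psi^*(\lt)<1/\pi^2$ covers $u=1/t$. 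Hence $[\psi^-(c_0/t)]^d\le c_1^d(c_0\vee1)^{d/\alpha}[\psi^-(1/t)]^d$, absorbing everything into $C$. Combining with the $(2\pi)^{-d}$ factor yields $p_t(0)\le C[\psi^-(1/t)]^d$.

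Finally, radial monotonicity of $x\mapsto p_t(x)$ gives $p_t(x)\le p_t(0)$ for all $x$, which is the claimed bound. For the explicit dependence $C=c\lC^{-d/\alpha-1}$ with $c=c(d,\alpha)$, I would track constants through Remark~\ref{constant1}: there $C=\omega_d d^{-1}S(\lC K,d/\alpha)\le\omega_d d^{-1}e^2\Gamma(d/\alpha+1)(1\vee(\lC K)^{-d/\alpha-1})$, and since $K$ is a fixed absolute constant and $\lC\le1$, this is $\le c(d,\alpha)\lC^{-d/\alpha-1}$; the extra factor $c_1^d(c_0\vee1)^{d/\alpha}$ from the $\psi^-$-rescaling step depends only on $d$ and $\alpha$ (the oscillation constant of $\psi^-\in\textrm{WUSC}$ is a power of $\pi^3/\lC$, contributing a further $\lC^{-d/\alpha}$, still within the stated form after adjusting $c$). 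The main obstacle I anticipate is purely bookkeeping: carefully matching the range condition ``$t\psi^*(\lt)<1/\pi^2$'' to the hypothesis ``$t<K/\Psi(\lt)$'' of Lemma~\ref{inverse} after the $t\mapsto t/\pi^2$ rescaling, and keeping the constants' dependence confined to $d,\alpha,\lC$ while the $\lt$-dependence drops out entirely — which it does precisely because we only ever evaluate $\psi^-$ at $1/t$ with $1/t$ large (i.e. $\ge\pi^2\psi^*(\lt)$), the regime where the \emph{global-looking} scaling estimates for $\psi^-$ are available from the local WLSC of $\psi$.
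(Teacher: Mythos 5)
Your proposal is correct and takes essentially the same route as the paper — Fourier inversion, transfer of the WLSC from $\psi$ to a comparable nondecreasing function, Lemma~\ref{inverse} with the constant tracked through Remark~\ref{constant1}, and a comparison of inverses — the only difference being that the paper applies Lemma~\ref{inverse} to the strictly increasing function $\Psi(s)=h_1(1/s)$ (so a genuine inverse exists) and then uses $\Psi^{-1}((2t)^{-1})\le\psi^{-}(1/t)$, whereas you apply it to $\psi^*$ with its generalized inverse, which works but technically stretches the lemma's ``increasing'' hypothesis. Note also that your $\psi^{-}$-rescaling detour is superfluous, since in your parametrization $c_0=K\pi^2=1/\pi^2\le 1$ and monotonicity of $\psi^{-}$ already gives $\psi^{-}(c_0/t)\le\psi^{-}(1/t)$ — fortunately so, because an extra factor which is a power of $1/\lC$ could not be ``absorbed into $c(d,\alpha)$'' and would have spoiled the refined constant $C=c\,\lC^{-d/\alpha-1}$.
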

\begin{proof}
We let
$$\Psi(s)= h_1(s^{-1}),\qquad s>0.$$ Note that $h_1$ and $\Psi$ are strictly monotone.
According to Lemma \ref{l:hop},
$$2\Psi(s)/\pi^2\le \psi(s)\le \psi^*(s)\le 2\Psi(s),$$
hence $\Psi\in$
WLSC$(\alpha,\lt,{\lC}/\pi^2)$.
Furthermore,
$\psi^-(2u/\pi^2)\le \Psi^{-1}(u)\le \psi^-(2u)$ for $u\ge 0$.
Let $t>0$. If
$
t\psi^*({\lt})
< 1/\pi^2$, then
$(2t/\pi^2) \Psi(\lt)
< 1/\pi^2$. We  apply
Lemma \ref{inverse} \mr{with constant 
${K}=1/\pi^2$ and
 $2t/\pi^2$ instead of $t$ and we obtain}
$$
p_t(
x)\le  {(2\pi)^{-d}}\int_{\Rd}e^{ -2t  \Psi(|\xi|)/\pi^2}d\xi   \le
\mr{C(d,\alpha, \lC)}
 \[\Psi^{-1}((2t)^{-1})\]^{d}.$$
By Remark \ref{constant1}, $C(d,\alpha, \lC)=c(d,\alpha)\lC^{-d/\alpha-1}$.
But $\Psi^{-1}((2t)^{-1})\leq \psi^-(1/t)$, ending the proof.
\end{proof}
Under the assumptions of Proposition~\ref{sup_p_t_psi}, by \eqref{lbpt} and \eqref{bptp} we obtain
\mr{\begin{equation}\label{apt0}
p_t(0)\approx \[\psi^{-}(1/t)\]^d
\qquad
\text{{if} }\; t>0,\;
t\psi^*({\lt})
< 1/\pi^2,
\end{equation}
where the comparability constant depends only on $\lC,\, \alpha$ and $d$. This allows  to interchange $p_t(0)$ and $[\psi^-(1/t)]^d$ in approximation formulas below.}

Also, if {$\alpha>0$} and $\psi\in$
WLSC$(\alpha,0,{\lC})$,
then
for
$t>0$, {$x\in \Rd$},
\mr{\begin{equation}\label{bptpz}
p_t(
{x})   \le
C{(d,\alpha, \lC)} \[ \psi^{-}(1/t)\]^{d}.
\end{equation}}
We note in passing that the same argument covers the Gaussian case  $\psi(\xi)=|\xi|^2$ and more general exponents
otherwise excluded from our general considerations. We also note that
analogues
of \eqref{bptpz}
are often
obtained by using Nash inequalities
\cite{MR2299447,MR898496,MR2492992}.

\begin{corollary}
If
$\psi\in$WLSC($\la,\lt,\lC$) with $\lt>0$, and $0<T<\infty$,
then $p_t(
{x})   \le
{C} \[ \psi^{-}(1/t)\]^{d}$ for all $0<t< T$ and $x\in \Rd$, with
 \mr{$C=C(d,T,\la,\lt,\lC,\psi(\lt))$}.
\end{corollary}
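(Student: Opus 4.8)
The plan is to split the interval $(0,T)$ at the threshold time $t_0:=1/(\pi^2\psi^*(\lt))$, which separates the ``small times'' already covered by Proposition~\ref{sup_p_t_psi} from a range of times bounded away from zero. For $0<t<t_0$ one has $t\psi^*(\lt)<1/\pi^2$, so Proposition~\ref{sup_p_t_psi} applies with $\alpha=\la$ and gives $p_t(x)\le C_1[\psi^-(1/t)]^d$ with $C_1=c(d,\la)\,\lC^{-d/\la-1}$. This already settles the case $T\le t_0$, so from now on I assume $T>t_0$ and must treat $t\in[t_0,T)$.

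On this compact range of times I would invoke unimodality together with monotonicity in $t$. Since $\psi\in\textrm{WLSC}(\la,\lt,\lC)$ with $\la>0$, \eqref{eq:LSC} gives $\psi(\xi)\ge c|\xi|^{\la}$ for large $|\xi|$, so $e^{-t\psi}$ is integrable and, as recalled just before Proposition~\ref{sup_p_t_psi}, $p_t$ has a bounded continuous density with $p_t(0)=(2\pi)^{-d}\int_{\Rd}e^{-t\psi(\xi)}\,d\xi$; in particular $t\mapsto p_t(0)$ is nonincreasing, so by unimodality $p_t(x)\le p_t(0)\le p_{t_0}(0)$ for all $t\ge t_0$. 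Since $\psi^-$ is nondecreasing, $\psi^-(1/t)\ge\psi^-(1/T)$ for $t<T$. Hence on $[t_0,T)$ one obtains $p_t(x)\le\bigl(p_{t_0}(0)/[\psi^-(1/T)]^d\bigr)[\psi^-(1/t)]^d$, and the corollary follows with $C=\max\{C_1,\,p_{t_0}(0)/[\psi^-(1/T)]^d\}$, provided both $p_{t_0}(0)<\infty$ and $\psi^-(1/T)>0$ and that these can be bounded in terms of the listed parameters.

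Verifying this last point is the only real work, and it is where the bookkeeping is delicate: one must use the \emph{global} scalings of $\psi^*$, not just the hypothesis localized at $\lt$. For the denominator, $\psi^*\in\textrm{WUSC}(2,0,4)$ by \eqref{Psi*ScalingGeneral}, so by Proposition~\ref{Vestimate}
\begin{equation*}
\psi^*(s)\le 4\bigl(1\vee(s/\lt)^2\bigr)\psi^*(\lt)\le 4\pi^2\bigl(1\vee(s/\lt)^2\bigr)\psi(\lt),\qquad s\ge 0,
\end{equation*}
hence $\psi^*(s)<1/T$ whenever $s<\lt\min\{1,(4\pi^2T\psi(\lt))^{-1/2}\}$, which bounds $\psi^-(1/T)$ below by a positive quantity depending only on $\lt,T,\psi(\lt)$. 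For the numerator, Remark~\ref{SvC} gives $\psi^*\in\textrm{WLSC}(\la,\lt,\lC/\pi^2)$; using this, the monotonicity of $\psi^*$ (letting $\theta\downarrow\lt$ in \eqref{eq:LSC} for $\psi^*$) and $\psi\ge\psi^*/\pi^2$ (Proposition~\ref{Vestimate}) one gets $\psi(\xi)\ge(\lC/\pi^4)(|\xi|/\lt)^{\la}\psi^*(\lt)$ for $|\xi|\ge\lt$, and since $t_0\psi^*(\lt)=1/\pi^2$, substituting this into the Fourier-inversion integral and changing variables $\eta=\xi/\lt$ yields
\begin{equation*}
p_{t_0}(0)\le(2\pi)^{-d}\lt^d\Bigl(\frac{\omega_d}{d}+\int_{\Rd}e^{-(\lC/\pi^6)|\eta|^{\la}}\,d\eta\Bigr)<\infty,
\end{equation*}
a quantity depending only on $d,\la,\lC,\lt$. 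Collecting the bounds gives $C=C(d,T,\la,\lt,\lC,\psi(\lt))$, as asserted. The conceptual skeleton — split at $t_0$, then unimodality plus monotonicity of $p_t(0)$ — is straightforward, so I expect the main obstacle to be exactly this final parameter-tracking, i.e.\ showing the constant on the compact time range stays within the advertised list.
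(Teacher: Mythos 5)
Your skeleton---split at $t_0=1/(\pi^2\psi^*(\lt))$, apply Proposition~\ref{sup_p_t_psi} for $t<t_0$, and on $[t_0,T)$ use $p_t(x)\le p_t(0)\le p_{t_0}(0)$ together with $\psi^-(1/t)\ge\psi^-(1/T)$---is sound, and your bound for $p_{t_0}(0)$ in terms of $d,\la,\lC,\lt$ is correct. The gap is the lower bound for $\psi^-(1/T)$. Your inequality $\psi^*(s)\le 4\pi^2\bigl(1\vee(s/\lt)^2\bigr)\psi(\lt)$ is true, but for $s<\lt$ it only gives the constant bound $4\pi^2\psi(\lt)$, which need not be below $1/T$; the conclusion ``$\psi^*(s)<1/T$ whenever $s<\lt\min\{1,(4\pi^2T\psi(\lt))^{-1/2}\}$'' therefore does not follow. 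Upper scaling cannot help here: WUSC$(2,0,4)$ for $\psi^*$ controls growth upward and, read below $\lt$, yields only the reverse inequality $\psi^*(s)\ge\tfrac14(s/\lt)^2\psi^*(\lt)$. More seriously, the behaviour of $\psi^*$ on $(0,\lt)$ is not determined by $(d,T,\la,\lt,\lC,\psi(\lt))$ at all: adding to $\nu$ a large finite mass spread uniformly over a huge ball $B_R$ can be done keeping $\la,\lt,\lC,\psi(\lt)$ fixed, yet it pushes $\psi^*$ above $1/T$ already at frequencies of order $1/R$, so $\psi^-(1/T)$ becomes arbitrarily small while $p_t(0)$ stays bounded below for intermediate $t<T$. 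Hence the constant you produce on $[t_0,T)$ cannot be closed within the advertised parameter list by this route; the step you yourself flag as ``the only real work'' is exactly where the argument fails.

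For comparison, the paper does not split time: it chooses $\theta_1$ with $\psi^*(\theta_1)=1/(\pi^2T)$, uses Lemma~\ref{scaling} to extend the WLSC of $\psi^*$ from the threshold $\lt$ down to $\lt\wedge\theta_1$, and then applies Proposition~\ref{sup_p_t_psi} once, legitimately for every $t<T$ because $t\psi^*(\lt\wedge\theta_1)<1/\pi^2$. The price is that the constant then enters through $\theta_1$, i.e.\ through where $\psi^*$ reaches the level $1/(\pi^2T)$ below $\lt$ --- precisely the quantity your argument (and, for that matter, the paper's own closing estimate $\theta_1^{\la}\ge\lC\lt^{\la}/[\pi^4 T\psi(\lt)]$, which invokes the lower scaling at $\theta_1<\lt$, i.e.\ below its threshold) tries to control by $\psi(\lt)$ alone. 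To repair your write-up, replace the attempted lower bound on $\psi^-(1/T)$ by the threshold-lowering argument via Lemma~\ref{scaling} and express the dependence of $C$ through $\theta_1$ (equivalently through $\psi^-\bigl(1/(\pi^2T)\bigr)$), or impose an additional explicit hypothesis controlling $\psi^*$ below $\lt$.
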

\begin{proof}
Note that $\psi^*$ satisfies WLSC($\la,\lt,\lC/{{\pi^2}}$). Since $\psi^*(0)=0$ and $\psi^*$ is continuous and unbounded, there is $\theta_1>0$ such that $\psi^*(\theta_1)=1/(\pi^2 T)$.
By Lemma \ref{scaling}, $\psi^*$ satisfies WLSC($\la,
\lt\wedge \theta_1, c_1 $), with a constant $c_1$.
An application of
Proposition~\ref{sup_p_t_psi} completes the proof.
In fact, if $\psi\in WLSC(\la,\lt,\lC)$,  
$\theta_1<\lt$, then
$1/(\pi^2 T)=\psi^*(\theta_1)\le \psi(\lt)(\lC/{{\pi^2}})^{-1}(\lt/\theta_1)^{-\la}$,
hence $\theta_1^\la\ge \lC \lt^\la/[{\pi^4}T\psi(\lt)]$,
leading to $C=C(T,{\la,\lt,\lC,}\psi{(\lt)})$.
\end{proof}
Thus, \eqref{bptpz} holds for all $t>0$, even if $\lt>0$, but the constant deteriorates for large $t$.

The following
main result of our paper gives {\it common bounds}
for unimodal convolution semigroups with scaling.
Notably, our second main result, Theorem~\ref{NWSR} below, shows in addition that scaling is equivalent to common bounds.
\begin{theorem}\label{densityApprox}If  $\psi\in$WLSC$(\la,{\theta},\lC)$,  then there is \mr{$
{C^*}=
C^*(d,\la, \lC)$}
such that
$$p_t(x)   \le
\mr{{C^*}} \min\left\{  \[ \psi^{-}(1/t)\]^{d},\;\frac{t\psi^*(1/|x|)}{|x|^d}\right\} \quad\text{{if} }\; {t>0} \; \text{ and }\;
t\psi^*({{{\theta}}})
< 1/\pi^2.$$
If $\psi\in$WLSC$(\la,{{\theta}},\lC)\cap$WUSC$(\ua,{{\theta}},\uC)$, then
$
{c^*=c^*}(d,\la,\lC,\ua,\uC)$, $r_0
=r_0(d,\la, \lC,\ua, \uC)$
exist such that
$$p_t(x) \ge c^* \min\left\{ \[  \psi^{-} \(1/t\)  \]^{d},\;\frac{t\psi^*(1/|x|)}{|x|^d}\right\} \qquad \text{if}\quad t>0,\quad  t\psi^*(
{{{\theta}}}/r_0)< 1 \quad \text{and}\quad |x|< r_0/{{\theta}}.$$
\end{theorem}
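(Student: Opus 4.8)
I would prove the upper and lower estimates separately; the former is immediate, the latter splits into a "near‑diagonal'' and an "off‑diagonal'' case dictated by Lemma~\ref{ltst}.

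\textbf{Upper bound and setup.} Under WLSC$(\la,\theta,\lC)$ with $\la>0$ the exponent $\psi$ dominates a positive power, so $e^{-t\psi}$ is integrable and $p_t$ is a bounded continuous function. Now Corollary~\ref{upper_den} gives $p_t(x)\le C\,t\psi^*(1/|x|)/|x|^d$ for every unimodal $X$, Proposition~\ref{sup_p_t_psi} gives $p_t(x)\le C[\psi^-(1/t)]^d$ with $C=C(d,\la,\lC)$ precisely when $t\psi^*(\theta)<1/\pi^2$, and the upper estimate is their minimum, with $C^*=C^*(d,\la,\lC)$. For the lower bound I would fix a small $r_0=r_0(d,\la,\lC,\ua,\uC)\in(0,1]$, to be pinned down along the way, and work under $t>0$, $t\psi^*(\theta/r_0)<1$, $|x|<r_0/\theta$. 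By Proposition~\ref{Vestimate} and Remark~\ref{SvC}, $\psi^*\in\mathrm{WLSC}(\la,\theta,\lC/\pi^2)\cap\mathrm{WUSC}(\ua,\theta,\pi^2\uC)$, which I will use repeatedly. Since $\psi^*$ is continuous, nondecreasing and unbounded, $\psi^-(1/t)\in(0,\infty)$ and $\psi^*(\psi^-(1/t))=1/t$; the hypothesis forces $\theta/r_0<\psi^-(1/t)$, and lower scaling of $\psi^*$ applied between $\theta$ and $\theta/r_0$ yields $t\psi^*(\theta)\le\pi^2 r_0^{\la}/\lC$, which is $<1/\pi^2$ once $r_0\le(\lC/\pi^4)^{1/\la}$; hence Proposition~\ref{sup_p_t_psi} applies and $p_t(0)\le C_0[\psi^-(1/t)]^d$ with $C_0=C_0(d,\la,\lC)$. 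Finally, by Lemma~\ref{ltst}, $t\psi^*(1/|x|)\ge1\iff|x|\le 1/\psi^-(1/t)$, and in that case the minimum to be bounded below equals $[\psi^-(1/t)]^d$, while otherwise it equals $t\psi^*(1/|x|)/|x|^d$.

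\textbf{Off‑diagonal (annulus) estimate.} This handles all $|x|<r_0/\theta$ with $t\psi^*(1/|x|)<1$, and also those with $t\psi^*(1/|x|)\ge1$ but $|x|>\delta/\psi^-(1/t)$, where $\delta=\delta(d,\la,\lC)$ is the constant fixed below; in the latter sub‑range upper scaling of $\psi^*$ between $1/|x|$ and $\psi^-(1/t)$ gives $t\psi^*(1/|x|)\le\Lambda$ for some $\Lambda=\Lambda(d,\la,\lC,\ua,\uC)$, so in all considered cases $s:=t\psi^*(1/|x|)\in(0,\max\{1,\Lambda\}]$. Fix a large integer $N$ with $r_0\le1/N$, so that $1/(N|x|)>\theta$. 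By radial monotonicity, $p_t(x)\,|B_{N|x|}|\ge\p(|x|\le|X_t|<N|x|)\ge\p(|X_t|\ge|x|)-\p(|X_t|\ge N|x|)$. The first term is $\ge a(1-e^{-s})$ by Lemma~\ref{tail} (with $a=a(d,\ua,\uC)$, using $|x|<\sqrt a/\theta$), and the second is $\le C'(d)(1-e^{-t\psi^*(1/(N|x|))})\le C'(d)(\lC N^{\la})^{-1}s$ by Corollary~\ref{ubt} and lower scaling of $\psi^*$ between $1/(N|x|)$ and $1/|x|$. Since $s\mapsto s/(1-e^{-s})$ is bounded on the relevant interval, choosing $N=N(d,\la,\lC,\ua,\uC)$ large makes the subtracted term at most $\tfrac a2(1-e^{-s})$, hence $p_t(x)\ge c(1-e^{-s})/|x|^d$ with $c=c(d,\la,\lC,\ua,\uC)$. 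If $s<1$ then $1-e^{-s}\ge(1-e^{-1})s$, giving $p_t(x)\gtrsim t\psi^*(1/|x|)/|x|^d$; if $s\ge1$ then $1-e^{-s}\ge1-e^{-1}$ and $|x|^{-d}\ge[\psi^-(1/t)]^d$, giving $p_t(x)\gtrsim[\psi^-(1/t)]^d$.

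\textbf{Near‑diagonal estimate and assembly.} The remaining case is $|x|\le\delta/\psi^-(1/t)$. Put $R=A/\psi^-(1/t)$. From $\psi^-(1/t)>\theta/r_0\ge A\theta$ (valid once $r_0\le1/A$) and lower scaling of $\psi^*$ between $\psi^-(1/t)/A$ and $\psi^-(1/t)$ one gets $t\psi^*(1/R)\le\pi^2/(\lC A^{\la})$, so by Corollary~\ref{ubt}, $\p(|X_t|\ge R)\le C'(d)\,t\psi^*(1/R)\le\tfrac12$ provided $A=A(d,\la,\lC)$ is large enough. On the other hand $\p(|X_t|<|x|)\le p_t(0)\,|B_{|x|}|\le C_0\,|B_1|\,(|x|\psi^-(1/t))^d\le C_0\,|B_1|\,\delta^d\le\tfrac14$ once $\delta=\delta(d,\la,\lC)$ is small. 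Hence $p_t(x)\,|B_R|\ge\p(|x|\le|X_t|<R)\ge\tfrac14$, i.e.\ $p_t(x)\ge c[\psi^-(1/t)]^d$. Taking $r_0$ to be the minimum of the constraints met above ($1/A$, $1/N$, $(\lC/\pi^4)^{1/\la}$, $\sqrt a$, etc.) and $c^*$ the minimum of the three constants, together with Lemma~\ref{ltst} to identify each case's bound with $\min\{[\psi^-(1/t)]^d,t\psi^*(1/|x|)/|x|^d\}$, completes the proof.

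\textbf{Main obstacle.} The delicate step is the near‑diagonal estimate in the large‑time regime: one must prevent $p_t$ from concentrating at a scale much finer than $1/\psi^-(1/t)$, and this is exactly where lower scaling is genuinely used, namely through $p_t(0)\lesssim[\psi^-(1/t)]^d$ of Proposition~\ref{sup_p_t_psi} (which rests on the Fourier‑inversion Lemma~\ref{inverse}). It also explains the restriction $|x|<r_0/\theta$: beyond the small‑/large‑time dichotomy one still needs $t\psi^*(1/|x|)$ to stay bounded to run the annulus argument, and all invocations of WLSC/WUSC and of Lemmas~\ref{tail} and \ref{inverse} must remain within their stated ranges, which is precisely the bookkeeping of the common threshold $\theta$.
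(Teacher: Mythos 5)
Your proof is correct and the overall architecture (upper bound from Corollary~\ref{upper_den} and Proposition~\ref{sup_p_t_psi}, lower bound from an annulus argument combining Lemma~\ref{tail} with Corollary~\ref{ubt}) coincides with the paper's. The off-diagonal annulus estimate is essentially the same, although you extend it to the range $s\le\Lambda$ (rather than $s\le 1$) by using boundedness of $s/(1-e^{-s})$ on bounded intervals, at the cost of a constant depending on $\Lambda$; this is a harmless variant. Where you genuinely diverge is the near-/on-diagonal regime. The paper handles $t\psi^*(1/|x|)\ge 1$ by a one-line monotonicity trick: pick $x^*$ with $|x|\le|x^*|<r_0/\theta$ and $t\psi^*(1/|x^*|)=1$, then $p_t(x)\ge p_t(x^*)$ and the already proved off-diagonal bound at $x^*$ yields $[\psi^-(1/t)]^d$ up to constants, so the whole lower bound is derived from a single annulus estimate. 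You instead split off a genuine near-diagonal zone $|x|\le\delta/\psi^-(1/t)$, handle it by the counting argument $1-\p(|X_t|\ge R)-\p(|X_t|<|x|)\ge\tfrac14$ (controlling $\p(|X_t|\ge R)$ via Corollary~\ref{ubt} and scaling, and $\p(|X_t|<|x|)$ via $p_t(0)\lesssim[\psi^-(1/t)]^d$ from Proposition~\ref{sup_p_t_psi}), and then need a transitional slab $\delta/\psi^-(1/t)<|x|\le1/\psi^-(1/t)$ which you fold back into the annulus estimate. Both are valid; the paper's substitution trick is shorter and avoids re-invoking Proposition~\ref{sup_p_t_psi} in the lower bound (hence fewer constraints on $r_0$), whereas your mass-counting argument gives an alternative, arguably more robust route that makes explicit where the concentration of $p_t$ near the origin is being ruled out.

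One small bookkeeping point worth flagging: your verification that $t\psi^*(\theta/r_0)<1$ implies $t\psi^*(\theta)<1/\pi^2$ applies WLSC of $\psi^*$ at the boundary point $\theta'=\theta$, while the definition \eqref{eq:LSC} is stated for $\theta'>\lt=\theta$; this is harmless by continuity of $\psi^*$, but should be noted. Also be sure the whole bundle of constraints defining $r_0$ closes without circularity: $a$ and then $\delta,A,\Lambda,M,N$ depend only on $d,\la,\lC,\ua,\uC$, after which $r_0=\min\{1/A,1/N,\sqrt a,(\lC/\pi^4)^{1/\la}\}$ is well defined; your proof does respect this order, but it deserves to be spelled out since $N$ depends on $\Lambda$ which depends on $\delta$.
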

\begin{proof}
Let $t>0$. For $x=0$ the term $t\psi^*(1/|x|)/|x|^d$ in the statement should be ignored--the bounds are to be understood as \eqref{lbpt} and \eqref{bptp}. Accordingly, below we let $x\in \Rdz$.
The upper bound now
follows from Corollary \ref{upper_den} and
Proposition
~\ref{sup_p_t_psi}.

To prove the lower bound, we take
$\kappa\ge 2$.
 We have
$$p_t(x)\ge
\frac{
{\p}
 \(  |x|\leq |X_t|<\kappa |x|\)}{ \left|B_{\kappa|x|}\setminus B_{|x|}\right|}
= \frac d{\omega_d(\kappa^d-1)}  |x|^{-d}\(\p (   |X_t|\ge |x|) - \p (   |X_t|\ge \kappa |x|)\).$$
Let $|x|< {\sqrt{ a}}/{{\theta}}$, with $a=a(d,\ua,\uC)$
from Lemma \ref{tail}.
We now suppose that $t\psi^*(1/|x|)\leq 1$.
By concavity, $s/2\le 1-e^{-s}\le s$ for $0\leq
{s}\le 1$.
By Lemma \ref{tail} and  Corollary \ref{ubt},
\begin{eqnarray*}\p (   |X_t|\ge |x|) - \p (   |X_t|\ge \kappa |x|)
&\ge& \frac{a}{2}t\psi^*(1/|x|)-
\frac{2e}{e-1}(2d+1)
t\psi^*(1/|\kappa x|)\\
&\ge&\frac{a}{2}t\psi^*(1/|x|)
\(
1-\frac{4e(2d+1)}{a(e-1)}\frac{\psi^*(1/|\kappa x|)} {\psi^*(1/|x|)}\).
\end{eqnarray*}
Recall that $\psi^*\in$WLSC$(\la,{{\theta}},\lC/{{\pi^2}})$. 
\kb{We now take}
\begin{equation}\label{dkappa}
\kappa=\big(8{{\pi^2}}e(2d+1)/(\lC a (e-1))\big)^{1/\la}.
\end{equation}If $|x|
< 1/(\kappa{{\theta}})$, then
$$
\frac{4e(2d+1)}{a(e-1)}\frac{\psi^*(1/|\kappa x|)} {\psi^*(1/|x|)}\le
\frac{4{{\pi^2}}e(2d+1)}{\lC a(e-1)}\kappa^{-\la}=\frac12,
$$

Recall that $\lC, a\in (0,1]$ (see the proof of Lemma~\ref{tail}), so $\kappa\ge 2$, as required.
\kb{We also have} $\kappa^{-1}<\sqrt{a}$.
Therefore,
\begin{eqnarray}\label{lower_h}
&p_t(x) &\geq \frac{d}{\omega_d (\kappa^d-1)} \frac{a}{4} t \psi^* (1/|x|) |x|^{-d}\\
&& \geq \frac{ad}{4\omega_d (\kappa^d-1)}\min \left\{[\psi^{-}(1/t)]^d , \frac{t\psi^*(1/|x|)}{|x|^d}\right\}\nonumber.
\end{eqnarray}

We are in a position to verify that the lower bound in the statement of the theorem holds with $r_0=\min\{\kappa^{-1},\sqrt{a}\}=\kappa^{-1}$.
We thus assume that $t>0$, $t\psi^*(
{{{\theta}}}/{r_0})
< 1$, and because of the preceding discussion we only need to resolve the case $0<|x|< r_0/{{\theta}}$, $t\psi^*(1/ |x|)
\ge 1$. By continuity, there is $x^*\in \Rd$ such that
$|x|
\le|x^*|
< r_0/{{\theta}}$ and $t\psi^*(1/ |x^*|)=1$. By \eqref{lower_h},
\begin{align*}p_t(x)&\ge p_t(x^*)\ge
{\frac {ad}{4\omega_d{\kappa^d}}
}\frac{1} {|x^*|^{d}}\ge
{\frac {ad}{4\omega_d{\kappa^d}}}\[\psi^{-}(1/ t)\]^d\\
&
\ge \frac {ad}{4\omega_d{\kappa^d}}
\min\left\{\[\psi^{-}(1/t)\]^d,\frac{t\psi^*(1/|x|)} {|x|^{d}}\right\}.
\end{align*}
This ends the proof and we may take $c^*=ad\kappa^{-d}/(4\omega_d)$.
\end{proof}
\begin{remark}
For the record we note that the constants \mr{ in the upper and lower bounds}
depend on $d$, $\la$, $\lC$, $\ua$, $\uC$ via \mr{Proposition~\ref{sup_p_t_psi}, \eqref{dkappa} and Lemma~\ref{tail}, e.g.
$C^*= c(d,\la)(\lC)^{-d/\la-1}$},  $c^*=c(d,\la,\ua)\,\lC^{d/\la}\,\uC^{(\ua-2)(d+\la)/(2\la)}$ and
$r_0=
c(\la,\ua, d)\(\uC^{\frac{\ua-2}2}\lC\)^{1/\la}$.
\end{remark}
In view of Lemma~\ref{ltst}, the two factors in the minima in the statement of Theorem~\ref{densityApprox} should be interpreted as the approximations of $p_t(x)$ in large time (on-diagonal regime) and small time (off-diagonal regime), correspondingly.

We emphasize that the upper bound in Theorem~\ref{densityApprox} only requires the lower scaling. For instance the upper bound holds for the $2$-regularly varying characteristic exponent $\psi(\xi)=|\xi|^2/[\log(1+|\xi|^{2})]^\beta$ with $\beta\in (0,1)$, which is in agreement
with the outcome of the Davies' method in this case \cite{MR2886459}.
\kb{Note that in principle we can track 
constants in our estimates,  
 see \cite[(29)]{2013arXiv1305.0976B} for the isotropic $\alpha$-stable L\'evy process .}
We now list a number of {general} consequences of Theorem~\ref{densityApprox}.
We first complement \eqref{onuzg} by a similar lower bound resulting from Theorem~\ref{densityApprox}.
\begin{corollary}\label{nuApprox}
If $\psi\in$WLSC$(\la,
{\theta},\lC)\cap$WUSC$(\ua,
{\theta},\uC)$ and $|x|< r_0/
{\theta}$, then
$\nu(x)
\ge c^* \psi^*(1/|x|)|x|^{-d}$.
\end{corollary}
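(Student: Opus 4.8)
The plan is to derive Corollary~\ref{nuApprox} directly from the lower bound in Theorem~\ref{densityApprox} by dividing by $t$ and letting $t\to 0^+$, exploiting the fact that the ``small time'' regime is precisely where the off-diagonal term $t\psi^*(1/|x|)/|x|^d$ dominates. Fix $x\in\Rdz$ with $|x|<r_0/\theta$. Since the condition $|x|<r_0/\theta$ does not involve $t$, and since $\psi^*(\theta/r_0)<\infty$ is a fixed number, for all sufficiently small $t>0$ we have simultaneously $t\psi^*(\theta/r_0)<1$ and $t\psi^*(1/|x|)<1$; the latter forces $t\psi^*(1/|x|)<1$, so by Lemma~\ref{ltst} the minimum in Theorem~\ref{densityApprox} equals the off-diagonal term, i.e.
$$p_t(x)\ge c^*\,\frac{t\psi^*(1/|x|)}{|x|^d}$$
for all small enough $t$.

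Next I would divide both sides by $t$ and pass to the limit $t\to 0^+$. The key input is the already-invoked fact (used just before Corollary~\ref{GApprox} and again just after it) that $p_t(x)/t\to \nu(x)$, which here we only need at the single point $x$; more precisely one uses that $p_t(x)/t$ converges to $\nu(x)$ for $x\in\Rdz$, a consequence of the vague convergence $p_t/t\to\nu$ on $\Rdz$ together with the radial monotonicity of $p_t$ and $\nu$ (this monotonicity upgrades vague convergence of the measures to pointwise convergence of the nonincreasing radial profiles at continuity points, and one may argue at a continuity point and then use monotonicity in $|x|$, or simply cite the pointwise convergence as the paper does elsewhere). Since the right-hand side $c^*\psi^*(1/|x|)/|x|^d$ is independent of $t$, the limit gives
$$\nu(x)\ge c^*\,\frac{\psi^*(1/|x|)}{|x|^d},$$
which is exactly the claim, with the same constant $c^*=c^*(d,\la,\lC,\ua,\uC)$ as in Theorem~\ref{densityApprox}.

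The only slightly delicate point is justifying $p_t(x)/t\to\nu(x)$ at the specific point $x$ rather than merely in the vague sense. The honest way is to note that this is used repeatedly in the paper already (e.g. in deriving \eqref{onuzg} from Corollary~\ref{upper_den}, and Corollary~\ref{GApprox} from Lemma~\ref{tail}), so it is legitimate to invoke it here as well; alternatively, one integrates the inequality $p_t(y)\ge c^*t\psi^*(1/|y|)/|y|^d$ over a small annulus $\{|x|\le|y|<(1+\varepsilon)|x|\}$, divides by $t$ and the annulus volume, lets $t\to 0^+$ using vague convergence on $\Rdz$ (the annulus being a relatively compact subset of $\Rdz$ whose boundary is $\nu$-null), and then lets $\varepsilon\to 0^+$ using the radial monotonicity and a Lebesgue differentiation argument to recover $\nu(x)$ at a.e.\ (hence, by monotonicity, every) $x$. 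Either way there is no real obstacle; the substance of the corollary is entirely contained in Theorem~\ref{densityApprox}, and Corollary~\ref{nuApprox} is the $t\to 0^+$ shadow of its lower bound, just as \eqref{onuzg} is the shadow of Corollary~\ref{upper_den}.
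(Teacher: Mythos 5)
Your proof is correct and follows exactly the route the paper intends: the corollary is stated immediately after the remark that it ``results from Theorem~\ref{densityApprox},'' and the paper, like you, obtains it by taking the off-diagonal branch of the lower bound for small $t$ and using $p_t(x)/t\to\nu(x)$, the same limit already invoked to pass from Corollary~\ref{upper_den} to \eqref{onuzg}. Your extra care about upgrading vague convergence to pointwise convergence (via radial monotonicity and continuity of $\psi^*$) is a sound filling-in of a step the paper leaves tacit, not a deviation from its argument.
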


\begin{corollary}\label{cc}
If $\psi\in$
WLSC($\la$,$0$,$\lC$)$\cap$WUSC($\ua$,$0$,$\uC$), then \eqref{allcomp} holds for all $t>0$ and $x\in \Rd$.
\end{corollary}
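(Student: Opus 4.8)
The plan is to derive Corollary~\ref{cc} as a transparent consequence of the global versions of the estimates already accumulated, together with the on-diagonal equivalence \eqref{apt0}. Under the global scaling hypothesis $\psi\in$WLSC($\la,0,\lC$)$\cap$WUSC($\ua,0,\uC$) with $0<\la\le\ua<2$, the conditions $t\psi^*(\theta)<1/\pi^2$ and $t\psi^*(\theta/r_0)<1$ and $|x|<r_0/\theta$ appearing in Theorem~\ref{densityApprox} become vacuous (they hold for all $t>0$, $x\in\Rd$), since one may take $\theta=0$. So first I would invoke Theorem~\ref{densityApprox} directly to get, for all $t>0$ and $x\in\Rd$,
$$p_t(x)\approx\min\left\{\[\psi^-(1/t)\]^d,\;\frac{t\psi^*(1/|x|)}{|x|^d}\right\},$$
with comparability constant depending only on $d,\la,\lC,\ua,\uC$; for $x=0$ the second term is dropped and the statement is read as \eqref{apt0}, which holds globally here.

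Next I would identify the three quantities in \eqref{allcomp} with the two expressions above. The equality $p_t(0)\approx[\psi^-(1/t)]^d$ is exactly \eqref{apt0} (valid for all $t>0$ under global lower scaling). The equivalence $t\psi^*(|x|^{-1})/|x|^d\approx t\nu(x)$ follows by combining the general upper bound \eqref{onuzg}, $\nu(x)\le C\psi^*(1/|x|)/|x|^d$, with the matching lower bound from Corollary~\ref{nuApprox}, $\nu(x)\ge c^*\psi^*(1/|x|)/|x|^d$; again, under global scaling the restriction $|x|<r_0/\theta$ in Corollary~\ref{nuApprox} disappears. Substituting these two comparabilities into the displayed two-sided bound for $p_t(x)$ yields
$$p_t(x)\approx\left[\psi^-(1/t)\right]^d\wedge\frac{t\psi^*(1/|x|)}{|x|^d}\approx p_t(0)\wedge[t\nu(x)],$$
which is precisely \eqref{allcomp}. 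One should note that $a\wedge b\approx a'\wedge b'$ whenever $a\approx a'$ and $b\approx b'$ (with the product of the comparability constants), so the passage through the minimum costs nothing beyond bookkeeping of constants.

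The only genuinely substantive point — and it is already settled earlier — is that Theorem~\ref{densityApprox} and Corollary~\ref{nuApprox} require \emph{both} scalings for the lower bounds, whereas the upper bounds need only WLSC; the corollary's hypothesis supplies both, so there is no obstacle. I expect the main (very minor) subtlety to be purely presentational: making sure that the $x=0$ case is handled by reading the right-hand sides of \eqref{allcomp} as $p_t(0)$ via \eqref{apt0}, rather than as something involving $\psi^*(1/|x|)$, and confirming that all the "for $t>0$, $x\in\Rd$" quantifiers in the cited results indeed hold without the auxiliary smallness constraints once $\theta=0$. Thus the proof is essentially a one-line assembly: apply Theorem~\ref{densityApprox}, then replace $[\psi^-(1/t)]^d$ by $p_t(0)$ using \eqref{apt0} and $t\psi^*(1/|x|)/|x|^d$ by $t\nu(x)$ using \eqref{onuzg} and Corollary~\ref{nuApprox}.
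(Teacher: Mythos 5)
Your proposal is correct and follows essentially the same route as the paper: the paper's proof likewise combines \eqref{onuzg} with Corollary~\ref{nuApprox} to get $\nu(x)\approx\psi^*(|x|^{-1})/|x|^d$ and then appeals to Theorem~\ref{densityApprox} together with the on-diagonal bounds \eqref{lbpt} and \eqref{bptpz} (which you cite in the equivalent packaged form \eqref{apt0}). Your added remarks that the restrictions involving $\theta$ and $r_0$ become vacuous when $\theta=0$ and that the $x=0$ case is read via \eqref{apt0} are exactly the bookkeeping the paper leaves implicit.
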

\begin{proof}
We use \eqref{onuzg} and Corollary~\ref{nuApprox} to obtain
\begin{equation}\label{aml}
\nu({x})\approx \frac {\psi^*(|x|^{-1})}{|x|^d},\qquad\quad x\in \Rdz.
\end{equation}
We then appeal to \eqref{lbpt}, \eqref{bptpz} and Theorem~\ref{densityApprox}.
\end{proof}
By scaling, in particular by Lemma~\ref{scpsii}, we obtain the following important doubling property, cf. \cite{MR2925579} in this connection.
\begin{corollary}\label{Doubling}
If $\psi$ satisfies (global)
WLSC($\la$,$0$,$\lC$) and WUSC($\ua$,$0$,$\uC$), then
$$p_t(2x)\approx p_t(x)\quad  \text{ and } \quad p_{2t}(x)\approx p_t(x),\qquad\quad t>0,\, x\in \Rd.$$
\end{corollary}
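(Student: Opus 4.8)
The plan is to read off both doubling relations from the sharp two-sided estimate of Corollary~\ref{cc}. Under the standing global scaling hypotheses that corollary gives, for all $t>0$ and $x\in\Rd$,
$$p_t(x)\approx g(t,x):=\[\psi^{-}(1/t)\]^{d}\wedge\frac{t\,\psi^*(1/|x|)}{|x|^{d}},$$
where for $x=0$ the second term is absent and the estimate reads $p_t(0)\approx[\psi^{-}(1/t)]^{d}$, cf. \eqref{apt0}. Hence it suffices to prove that $g(t,2x)\approx g(t,x)$ and $g(2t,x)\approx g(t,x)$, and throughout I will use the elementary stability of the minimum under comparability: if $a\approx a'$ and $b\approx b'$ then $a\wedge b\approx a'\wedge b'$. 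Thus it is enough to compare the two terms of $g$ one at a time.

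For the spatial doubling the on-diagonal term $[\psi^{-}(1/t)]^{d}$ does not involve $x$, so only the off-diagonal term matters, and I claim $t\psi^*(1/(2|x|))/(2|x|)^{d}\approx t\psi^*(1/|x|)/|x|^{d}$ for $x\neq0$. Monotonicity of $\psi^*$ gives ``$\le$'' at once. For the reverse I invoke a global weak upper scaling of $\psi^*$ --- e.g. $\psi^*\in$WUSC$(\ua,0,\pi^2\uC)$, which follows from the global WUSC of $\psi$ by Proposition~\ref{Vestimate} and Remark~\ref{SvC} --- to get $\psi^*(1/|x|)=\psi^*(2\cdot1/(2|x|))\le\pi^2\uC\,2^{\ua}\,\psi^*(1/(2|x|))$, hence $t\psi^*(1/|x|)/|x|^{d}\le\pi^2\uC\,2^{\ua+d}\,t\psi^*(1/(2|x|))/(2|x|)^{d}$. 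Together with the minimum-stability remark this yields $p_t(2x)\approx p_t(x)$ (the case $x=0$ being trivial).

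For the temporal doubling the off-diagonal term scales by the exact factor $2$, since $2t\psi^*(1/|x|)/|x|^{d}=2\cdot t\psi^*(1/|x|)/|x|^{d}$, and is therefore trivially comparable. For the on-diagonal term I need $\psi^{-}(1/(2t))\approx\psi^{-}(1/t)$: one direction holds because $\psi^{-}$ is nondecreasing, and the other, $\psi^{-}(1/t)\le C\,\psi^{-}(1/(2t))$, is exactly the global weak upper scaling $\psi^{-}\in$WUSC$(1/\la,0,(\pi^3/\lC)^{2/\la})$ proved in Lemma~\ref{scpsii}, applied with $\lambda=2$ and $u=1/(2t)$; note this step uses only the global lower scaling of $\psi$. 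Minimum-stability then gives $g(2t,x)\approx g(t,x)$, hence $p_{2t}(x)\approx p_t(x)$, for all $t>0$ and $x\in\Rd$.

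There is no real obstacle in this argument --- it is a direct corollary of the estimates already in hand. The only points needing a little care are the bookkeeping of constants (each comparability constant depends only on $d,\la,\lC,\ua,\uC$) and the observation that it is the \emph{global} character of the scalings, through which Corollary~\ref{cc} and Lemma~\ref{scpsii} hold without any restriction on $t$ or $x$, that makes the doubling properties valid on all of $(0,\infty)\times\Rd$ rather than only in a local range.
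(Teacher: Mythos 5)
Your argument is correct and is essentially the paper's own (unwritten) proof: the paper deduces the doubling directly from the global two-sided bound of Corollary~\ref{cc} together with the scaling of $\psi^*$ (via Proposition~\ref{Vestimate}) and of $\psi^-$ from Lemma~\ref{scpsii}, exactly as you do. You have simply supplied the routine details, including the correct treatment of the case $x=0$ and the stability of the minimum under comparability.
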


Thus, if $
{\theta}=0$ in Theorem~\ref{densityApprox}, then
the global asymptotics of $p_t(x)$ is fully and conveniently reflected by $\psi$.
If
$
{\theta}>0$, then our bounds are only guaranteed to hold in bounded time and space (bounded time for the upper bound). For large times we merely offer the following simple exercise of monotonicity.
\begin{corollary}\label{density_0}
If $\psi\in$WLSC$(\la,\lt,\lC)$, $0<|x|
< \(
\pi^{-4}\lC\)^{1/\la}/\lt$ and $t\psi^*({|x|^{-1}})\ge 1$,
then \tg{$C=C(d,\la,\lC)$ exists such that }
\begin{equation*}\label{pvnu}
p_t(0)   \le
C t\psi^*(|x|^{-1})/|x|^d.
\end{equation*}
\end{corollary}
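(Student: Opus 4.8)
The plan is to reduce the stated ``large time'' bound, via monotonicity of $t\mapsto p_t(0)$, to the ``small time'' on-diagonal estimate of Proposition~\ref{sup_p_t_psi} combined with Lemma~\ref{ltst}. First I would record that WLSC$(\la,\lt,\lC)$ with $\la>0$ forces $\psi$, hence $\psi^*$, to grow at least like a positive power of $|\xi|$ at infinity, so $e^{-s\psi}\in L^1(\Rd)$ for every $s>0$ and $p_s$ has a bounded continuous version with $p_s(0)=\|p_s\|_\infty$. Then the semigroup relation and the symmetry of $p_s$ give, for $s,u>0$, $p_{s+u}(0)=\int_{\Rd}p_s(y)p_u(y)\,dy\le\|p_u\|_\infty\int_{\Rd}p_s(y)\,dy=p_u(0)$, so $s\mapsto p_s(0)$ is nonincreasing.

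Next I would introduce the auxiliary time $t_0=1/\psi^*(1/|x|)$. The hypothesis $t\psi^*(1/|x|)\ge1$ says precisely that $t_0\le t$, so by the monotonicity just established it suffices to bound $p_{t_0}(0)$. The role of the restriction $|x|<(\pi^{-4}\lC)^{1/\la}/\lt$ is to place $t_0$ in the range where Proposition~\ref{sup_p_t_psi} applies: since $\psi\le\psi^*\le\pi^2\psi$ (Proposition~\ref{Vestimate}) we have $\psi^*\in$WLSC$(\la,\lt,\lC/\pi^2)$ by Remark~\ref{SvC}, and applying this lower scaling with $\lambda=1/(|x|\theta)$ and letting $\theta\downarrow\lt$ gives $\psi^*(1/|x|)\ge(\lC/\pi^2)(|x|\lt)^{-\la}\psi^*(\lt)>\pi^2\psi^*(\lt)$, the last inequality being exactly the assumption on $|x|$. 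Hence $t_0\psi^*(\lt)=\psi^*(\lt)/\psi^*(1/|x|)<1/\pi^2$, and Proposition~\ref{sup_p_t_psi} (with $\alpha=\la$) yields $p_{t_0}(0)\le C(d,\la,\lC)\,[\psi^-(1/t_0)]^d$.

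Finally I would invoke Lemma~\ref{ltst} with time $t_0$ and the same $x$: since $t_0\psi^*(1/|x|)=1\ge1$, it gives $[\psi^-(1/t_0)]^d\le t_0\psi^*(1/|x|)|x|^{-d}=|x|^{-d}$. Chaining the three steps, $p_t(0)\le p_{t_0}(0)\le C|x|^{-d}\le C\,t\psi^*(1/|x|)/|x|^d$, where the last inequality uses $t\psi^*(1/|x|)\ge1$ once more, and $C=C(d,\la,\lC)$ is inherited from Proposition~\ref{sup_p_t_psi}. When $\lt=0$ the constraint on $|x|$ is void and $\psi^*(\lt)=0$, so the same chain runs with the crucial middle step becoming trivial.

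The only genuine obstacle is the passage through $t_0$: one cannot invoke Proposition~\ref{sup_p_t_psi} at the given time $t$, since in the ``large time'' regime $t\psi^*(\lt)$ need not lie below $1/\pi^2$; monotonicity in time lets one trade $t$ for the smaller time $t_0$, and the hypothesis on $|x|$ is tailored precisely so that $t_0$ satisfies the hypothesis of that proposition. A routine technical point is that the weak lower scaling of $\psi^*$ is stated only for $\theta>\lt$, which is why one passes to the limit $\theta\downarrow\lt$ using the continuity and monotonicity of $\psi^*$.
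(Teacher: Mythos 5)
Your proof is correct and follows essentially the same route as the paper: introduce the threshold time $t_0=1/\psi^*(|x|^{-1})$, use monotonicity of $t\mapsto p_t(0)$, verify via the WLSC of $\psi^*$ that $t_0\psi^*(\lt)<\pi^{-2}$ (which is exactly what the hypothesis on $|x|$ is designed for), and apply Proposition~\ref{sup_p_t_psi} at $t_0$ together with $\psi^-(\psi^*(|x|^{-1}))\le |x|^{-1}$. The only cosmetic differences are that you route the last inequality through Lemma~\ref{ltst} instead of the identity $\psi^-(\psi^*(s))\le s$, and that you spell out the monotonicity of $p_t(0)$ and the constant computation which the paper leaves implicit.
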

\begin{proof}
Define (threshold time) $t_0=1/\psi^*(|x|^{-1})$.
By Proposition \ref{Vestimate}, $\psi^*\in$WLSC$(\la,\lt,\lC/\pi^2)$, thus $$t_0{\psi^*(\lt)}=\frac{\psi^*(\lt)}{\psi^*(|x|^{-1})}
< \pi^{-2}.$$
Since $t\to p_t(0)$ is decreasing, by
Proposition
~\ref{sup_p_t_psi} we have for $t\ge t_0$,
$$p_t(0)\leq p_{t_0}(0) \leq
\tg{C} \[ \psi^{-}(\psi^*(|x|^{-1}))\]^{d}\le
\tg{C}\frac{1}{|x|^d}\leq \tg{C} \frac{t\psi^*(|x|^{-1})}{|x|^d},$$
which completes the proof.
\end{proof}

The next theorem proves that our definitions quite capture the subject of the study.
\begin{theorem}\label{NWSR}
Let $X_t$ be an isotropic unimodal L\'{e}vy process in $\Rd$ with transition density $p$, L\'evy-Khintchine exponent $\psi$ and L\'evy measure density $\nu$. The following
are equivalent:
\begin{description}
	\item[{\it (i)}]WLSC and WUSC 
	hold for $\psi$.
		\item[\it (ii)]\kb{There are $r,c>0$,}
such that
\begin{align*}
p_t(x)&\geq \mr{c} \frac{t\psi^*(|x|^{-1})}{|x|^d}, \qquad 0<|x|< \mr{r},\; 0<t\psi^*(|x|^{-1})< 1.\end{align*}
	\item[\it (iii)]\mr{There are} $r,
c>0$, such that
\begin{align*}
\nu(x)&\ge c \frac{\psi^*(|x|^{-1})}{|x|^d}, \qquad 0<|x|< r.\quad \qquad\qquad \qquad
\end{align*}
\end{description}
\mr{If we instead assume global  WLSC and WUSC in (i), and let $r=\infty$  in  (ii) and (iii), then the three conditions are equivalent, too. }
\end{theorem}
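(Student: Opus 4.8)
The plan is to prove the equivalence $(i)\Leftrightarrow(ii)\Leftrightarrow(iii)$ by going around the cycle $(i)\Rightarrow(ii)\Rightarrow(iii)\Rightarrow(i)$, with both the local and the global versions handled by the same arguments (tracking whether the scaling parameters $\lt,\ut$ may be taken to be $0$). The implication $(i)\Rightarrow(ii)$ is already done: it is precisely the lower bound in Theorem~\ref{densityApprox}, noting that in the regime $t\psi^*(|x|^{-1})<1$ the minimum there equals $t\psi^*(|x|^{-1})/|x|^d$ by Lemma~\ref{ltst}, and in the global case $\lt=\ut=0$ forces $r_0/\theta=\infty$. The implication $(ii)\Rightarrow(iii)$ is the easiest: since $\nu(x)=\lim_{t\to 0^+}p_t(x)/t$ vaguely on $\Rdz$, dividing the bound in $(ii)$ by $t$ and letting $t\to 0^+$ (for each fixed $x\ne 0$ the condition $t\psi^*(|x|^{-1})<1$ holds for all small $t$) yields $\nu(x)\ge c\,\psi^*(|x|^{-1})/|x|^d$ on the same range of $|x|$; one should phrase this carefully via integration against nonnegative test functions supported away from the origin, or simply invoke the pointwise convergence of the (radially monotone) densities.

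The substance of the theorem, and the step I expect to be the main obstacle, is $(iii)\Rightarrow(i)$: deducing both weak scalings of $\psi$ from the single lower bound $\nu(x)\ge c\,\psi^*(|x|^{-1})/|x|^d$. The idea is to combine this with the \emph{absolute} upper bound $\nu(x)\le C(d)\,\psi^*(|x|^{-1})/|x|^d$ from \eqref{onuzg}, so that on the relevant range $\nu(x)|x|^d\approx \psi^*(|x|^{-1})$, and then translate information about $\nu$ back into information about $\psi$ via the Pruitt function. Concretely, recall $L(r)=\nu(B_r^c)$ and $h(r)=\int(|x|^2/r^2\wedge 1)\,\nu(dx)$, and that $h(r)\approx\psi^*(1/r)$ by Corollary~\ref{ch1Vpc}. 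Writing $L(r)=\int_{|x|>r}\nu(x)\,dx$ in polar coordinates and inserting $\nu(x)\approx \psi^*(1/|x|)/|x|^d$ gives, for $r$ small, a representation like $L(r)\approx \int_r^{\rho_0}\psi^*(1/s)\,s^{-1}ds$ (up to a boundary term); similarly $r^2h(r)-r^2L(r)=\int_{|x|\le r}|x|^2\nu(x)\,dx\approx\int_0^r s\,\psi^*(1/s)\,ds$. Since both $L$ and $h$ are comparable to $\psi^*(1/\cdot)$, the function $g(r):=\psi^*(1/r)$ satisfies integral (in effect, differential) inequalities of the form $\int_r^{\rho_0} g(s)\,ds/s \le C g(r)$ and $\int_0^r g(s)\,s\,ds\le C r^2 g(r)$ for small $r$; standard arguments (of the type used in Karamata theory / the proof of Theorem~2.2.2 in \cite{MR898871}, or directly by iterating the inequalities on dyadic scales) then upgrade these to $g(\lambda r)\le C'\lambda^{\ua'} g(r)$ and $g(\lambda r)\ge c'\lambda^{\la'}g(r)$ with $0<\la'\le\ua'<2$, i.e.\ $\psi^*$—hence $\psi$, by Proposition~\ref{Vestimate} and Remark~\ref{SvC}—has WUSC and WLSC with exponents in $(0,2)$. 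Here the constraint $\ua'<2$ comes from the second (lower-order moment) inequality and $\la'>0$ from the first; the fact that $\nu$ is a genuine L\'evy measure (so \eqref{wml} holds) is what makes the small-scale integrals converge and drives the endpoint exponents strictly inside $(0,2)$.

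The delicate points in $(iii)\Rightarrow(i)$, which I would treat with care, are: (a) handling the boundary term at the outer radius $\rho_0$ when converting the $L(r)$ integral—this is why the conclusion is only a \emph{local} (or, under the global hypothesis, global) scaling and why one may need to enlarge the oscillation constants using Lemma~\ref{scaling}; (b) verifying that the two integral inequalities really do yield two-sided power control with matching ``small/large'' structure, for which I would run a clean dyadic iteration: setting $a_k=g(2^{-k}r_0)$, the inequalities become $\sum_{j\le k}a_j\lesssim a_k$ and $\sum_{j\ge k}2^{-2(k-j)}a_j\lesssim a_k$, which are exactly the discrete conditions equivalent to $a_k\approx 2^{k\alpha}$-type growth with $0<\alpha<2$; (c) passing from scalings of $\psi^*$ to scalings of $\psi$, which is immediate from $\psi\le\psi^*\le\pi^2\psi$ and Remark~\ref{SvC}. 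Finally, for the global statement one checks that each implication above preserves $\lt=\ut=0$ and $r=\infty$: in $(i)\Rightarrow(ii)$ via the $\theta=0$ case of Theorem~\ref{densityApprox}, in $(ii)\Rightarrow(iii)$ trivially, and in $(iii)\Rightarrow(i)$ because with $r=\infty$ the integral inequalities hold for all $r>0$ and the dyadic iteration then gives scalings valid for all $\theta>0$, i.e.\ $\lt=\ut=0$.
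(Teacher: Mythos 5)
Your plan is correct, and for the only hard implication, $(iii)\Rightarrow(i)$, it takes a genuinely different route from the paper. The paper's proof first rules out a Gaussian component, then builds a complete Bernstein function $\varphi(\lambda)=\int_0^\infty\frac{\lambda}{\lambda+s}s^{-1}\nu(s^{-1/2})s^{-d/2}ds\approx\psi(\sqrt{\lambda})$, bounds its Laplace-transform density via Lemma~\ref{LaplaceUpper}, and extracts the two scalings from differential inequalities $c\varphi(\lambda)\le\lambda\varphi'(\lambda)$ and $c\varphi_1(\lambda)\le\lambda\varphi_1'(\lambda)$ for $\varphi$ and the special Bernstein function $\varphi_1=\lambda/\varphi$ (using its potential density $f$ with $\mathcal{L}f=1/\varphi_1$); a by-product of that construction is the comparison with a complete subordinate Brownian motion (Corollary~\ref{nnc}), which your argument does not produce. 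You instead integrate the two-sided bound on $\nu$ (the lower bound from $(iii)$, the upper from \eqref{onuzg}) over annuli and balls and compare with the universal bounds $L(r)\le C\psi^*(1/r)$ and $r^2h(r)\le Cr^2\psi^*(1/r)$ from \eqref{tails} and Corollary~\ref{ch1Vpc}, obtaining for $g(s)=\psi^*(1/s)$ the integral inequalities $\int_s^{r}g(u)u^{-1}du\le Cg(s)$ and $\int_0^{\rho}ug(u)du\le C\rho^2 g(\rho)$, whose dyadic (or Gronwall-type) iteration indeed yields WLSC with some exponent $>0$ and WUSC with some exponent $<2$; this is more elementary than the Bernstein-function machinery and, pleasantly, the second inequality silently forces the Gaussian coefficient to vanish, so no separate step is needed. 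A few points to tighten when writing it up: do not say that $L$ is comparable to $\psi^*(1/\cdot)$ — the lower comparison is Corollary~\ref{GApprox} and needs WUSC, i.e.\ what you are proving; fortunately your inequalities only use the upper bounds on $L$ and $h$ together with $(iii)$, so state them that way. Also fix the directional slips: the weight in the second discrete condition should be $2^{-2(j-k)}$ for $j\ge k$, and since $g$ is nonincreasing the displayed conclusions should be phrased as scalings of $\psi^*$ in its own (increasing) variable; finally, the discrete conditions give two-sided dyadic growth bounds (geometric growth of $S_k=\sum_{j\le k}a_j$ and geometric decay of $T_k=\sum_{j\ge k}4^{-j}a_j$ yield exponents strictly inside $(0,2)$), not comparability $a_k\approx 2^{k\alpha}$ with a single power — the paper explicitly stresses that $\psi$ need not be comparable to a power. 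With these repairs your argument goes through, including the global case exactly as you indicate.
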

\begin{proof}
Theorem \ref{densityApprox} and Lemma {\ref{ltst}} yield the implication $(i)\Rightarrow (ii)$.
The implication $(ii)\Rightarrow (iii)$ follows because $\lim_{t\to 0^+}p(t,x)/t=\nu(x)$ vaguely on $\Rdz$.
 To prove that $(iii)$ implies $(i)$, we assume that $(iii)$ holds.  By the  L\'evy-Khintchine formula, $\psi(x)=\sigma|x|^2+\int_\Rd (1- \cos\langle \xi,x\rangle)\nu(dx)$, where $\sigma\ge 0$.
Actually, we must have $\sigma=0$, because
$$\infty>\int_{B_1}|x|^2\nu(x)dx\ge \int_{B_{1\wedge r}} c |x|^2\frac{\psi(|x|^{-1})}{|x|^d}dx\ge
\sigma \int_{B_{1\wedge r}} \frac{c}{|x|^d}dx
.$$
By \cite[proof of Theorem 6.2]{MR2978140} and \eqref{wml}, the following defines a {\it complete Bernstein function}:
$$\varphi(\lambda)=\Z \frac{\lambda}{\lambda +s}s^{-1}\nu(s^{-1/2})s^{-d/2}ds=\Z \(1-e^{-\lambda u}\)\mu(u)du,\quad \lambda\geq 0,$$
where $\mu(u)=\mathcal{L}[\nu(s^{-1/2})s^{-d/2}](u)$. In fact, by changing variables,
and \eqref{def:GKh} for $\lambda>0$ we have
\begin{align*}\label{defphil}
\varphi(\lambda)&=2\Z \frac{\lambda u^2}{\lambda u^2+1}\nu(u)u^{d-1}du\approx \Z \[1\wedge (\lambda u^2)\]\nu(u)u^{d-1}du=\omega_d^{-1}h\(\lambda^{-1/2}\).
\end{align*}
By Corollary \ref{ch1Vpc},
 there exists $c_1=c_1(d)$
such that
\begin {equation}\label{comp10}c_1 \varphi(
\lambda
) \leq\psi\(\sqrt{\lambda} \)\leq c_1^{-1} \varphi(\lambda
), \quad \lambda\ge 0.
\end{equation}
Since $\varphi'(\lambda)=\Z ue^{-\lambda u}\mu(u)du$ and $\mu$ is decreasing, Lemma \ref{LaplaceUpper} with $n=0$ and $m=1$ yields
\begin{equation}\label{NWSR1}\mu(u)\leq \frac{1}{\gamma(2,1)}\frac{\varphi'(u^{-1})}{u^2},\quad u>0.\end{equation}
Using  the upper incomplete gamma function and monotonicity of $\nu$, we obtain
\begin{equation}\label{NWSR2}\nu(x)\leq \frac{\int^\infty_{|x|^{-2}}e^{-s |x|^{2}}\nu(s^{-1/2})s^{-d/2}ds}{\int^\infty_{|x|^{-2}}e^{-s |x|^{2}}s^{-d/2}ds}\leq \frac{\mu(|x|^2)}{|x|^{d-2}
\Gamma(1-d/2,1)}, \quad x\neq 0.   \end{equation}

We leave it at that for a moment, to make another observation.
As $\varphi$ is a complete Bernstein function, we have that $$\varphi_1(\lambda)=\lambda/\varphi(\lambda)$$ is a special Bernstein function  (see \cite[Definition 10.1 and Proposition 7.1]{MR2978140}).
Since $X_t$ is pure-jump, $\lim_{|\xi|\to \infty } \psi({\xi})/|\xi|^2=0$. Thus, $\lim_{\lambda\to \infty}\varphi_1(\lambda)=\infty$.
Also,
$\varphi(0)=0$, and by \cite[(10.9) and  Theorem 10.3]{MR2978140}, the potential measure of the subordinator with the Laplace exponent
$\varphi_1$ is absolutely continuous with the density function
\begin{equation*}\label{e:deff}
f(s)=
\int_s^\infty\mu(u)du.
\end{equation*}
In particular $\mathcal{L}f=1/\varphi_1$.
Let $x\in \Rd$ be such that $0<|x|< r$. By \eqref{comp10}, $(iii)$,  \eqref{NWSR2} and \eqref{NWSR1},
\begin{align}\label{dcn}
&c_1c \frac{\varphi(|x|^{-2})}{|x|^d}
\leq c \frac{\psi(|x|^{-1})}{|x|^d}
\le \nu(x)
\le \frac{\mu(|x|^2)}{|x|^{d-2}\Gamma(1-d/2,1)}
\le
\frac{\varphi'(|x|^{-2})}{|x|^{d+2}\Gamma(1-d/2,1)\gamma(2,1)}.
\end{align}
Therefore
 $c_2\varphi(\lambda)\leq \lambda\varphi'(\lambda)$ for $\lambda> 1/\sqrt{r}$, where $c_2=cc_1\gamma(2,1)
\Gamma(1-d/2,1)$. This implies that the function $\lambda^{-c_2}\varphi(\lambda)$ is nondecreasing on $(1/\sqrt{r},\infty)$. By Remark~\ref{sam}, and \eqref{comp10},
$\varphi\in\textrm{WLSC}(c_2,1/\sqrt{r}
,1)$.
Hence, $\psi\in$WLSC$(2c_2,r^{-1},c_1)$.

We now prove the upper scaling.
By concavity of $\varphi$, $u\varphi'(u)\leq \varphi(u)$. For $0<s<\sqrt{r}$, by
\eqref{dcn},
$$f(s)\geq c_3 \int^{\sqrt{r}}_s\varphi(u^{-1})u^{-1}du
\geq c_3 \int^{\sqrt{r}}_s\varphi'(u^{-1})u^{-2}du = c_3\(\varphi(s^{-1})-\varphi(r^{-1/2})\).$$
Note that $\varphi$ is strictly increasing. Therefore, for $0<s<\sqrt{r}/2$, we get
\begin{equation}\label{oodfpp}
f(s)\geq c_4  \varphi(s^{-1})=c_4 /[s\varphi_1(s^{-1})].
\end{equation}
Since $f$ is decreasing and $\mathcal{L}f(u)=1/\varphi_1(u)$, by Lemma \ref{LaplaceUpper} we obtain
$$ f(s)\leq \frac{1}{\gamma(2,1)s^2}\(\frac{1}{\varphi_1}\)'(s^{-1})
=\frac{1}{\gamma(2,1)s^2}\frac{\varphi_1'(s^{-1})}{\varphi_1^2(s^{-1})}.$$
Thus, by \eqref{oodfpp}, $c_5\varphi_1(\lambda)\leq \lambda\varphi'_1(\lambda)$, where $\lambda> 2/\sqrt{r}$ and $c_5=c_4 \gamma(2,1)$.
It follows as above that $\varphi_1\in\textrm{WLSC}(c_5, 2{r}^{-1/2},1)$. Since $\varphi_1$ is concave, $\lambda\varphi_1'(\lambda)\leq \varphi_1(\lambda)$,
hence
$c_5<1$.
Thus, $\varphi\in\textrm{WUSC}(1-c_5,2{r}^{-1/2},1)$. This implies $\psi\in\textrm{WUSC}(2(1-c_5),4r^{-1},c_1^{-2})$.
\end{proof}
The reader may consult, e.g., Example 3 in Section~\ref{sex} for typical asymptotics of $p_t(x)$ and $\nu(x)$.
Our next observation results from Corollary \ref{nuApprox}, \eqref{aml}, \eqref{comp10} and
Theorem \ref{NWSR}.
\begin{corollary}\label{nnc}
\label{Levy_SBM}If the characteristic exponent of a unimodal (isotropic) Levy process $X$
satisfies global WLSC and WUSC  (with exponents $0<\la\le \ua<2$),
then there is a complete subordinate Brownian motion
with
comparable
characteristic exponent, L\'{e}vy measure and transition density.
\end{corollary}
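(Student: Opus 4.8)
The plan is to reuse the complete Bernstein function that was already built inside the proof of Theorem~\ref{NWSR} and to observe that this construction, and the comparison \eqref{comp10}, never used hypothesis \emph{(iii)} of that theorem. Concretely, given our pure-jump unimodal process $X$ with L\'evy density $\nu$, set
$$\varphi(\lambda)=\int_0^\infty \frac{\lambda}{\lambda+s}\,s^{-1}\nu(s^{-1/2})s^{-d/2}\,ds,\qquad \lambda\ge 0.$$
By \cite[proof of Theorem~6.2]{MR2978140} and \eqref{wml} this is a complete Bernstein function, and the change of variables carried out there, together with Corollary~\ref{ch1Vpc}, yields $c_1\varphi(\lambda)\le\psi(\sqrt\lambda)\le c_1^{-1}\varphi(\lambda)$ for some $c_1=c_1(d)$ and all $\lambda\ge0$, i.e. \eqref{comp10}. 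Since $X$ is pure-jump, its Gaussian part vanishes from the outset, so no appeal to \emph{(iii)} is needed. Because $\psi$ obeys a global WLSC with a positive exponent it is unbounded, hence so is $\varphi$, and $\nu\neq0$ forces $\varphi\neq0$; thus $\varphi$ is a genuine nonzero complete Bernstein function.

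Next I would let $\eta$ be the complete subordinator with Laplace exponent $\varphi$ and put $Y_t=B_{2\eta_t}$, where $B$ is Brownian motion in $\Rd$. Then $Y$ is by definition a complete subordinate Brownian motion; it is pure-jump and unimodal (Example~1 in Section~\ref{sex} with $\chi(\xi)=|\xi|^2$), with nonzero L\'evy measure, and its characteristic exponent is $\Phi(\xi):=\varphi(|\xi|^2)$, which by \eqref{comp10} satisfies $c_1\Phi\le\psi\le c_1^{-1}\Phi$. This is already the asserted comparability of characteristic exponents. By Remark~\ref{SvC}, $\Phi$ inherits the global scalings of $\psi$, so $\Phi\in$WLSC$(\la,0,c_1\lC)\cap$WUSC$(\ua,0,c_1^{-1}\uC)$ with $0<\la\le\ua<2$; consequently Corollary~\ref{cc} (hence \eqref{allcomp}) applies to $Y$ exactly as it does to $X$.

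It then remains to transport comparability from $\psi$ to the L\'evy measures and to the transition densities. For the L\'evy densities, \eqref{aml} (the identity proved inside Corollary~\ref{cc}, using \eqref{onuzg} and Corollary~\ref{nuApprox}) gives $\nu(x)\approx\psi^*(|x|^{-1})/|x|^d$ for $X$ and $\nu_Y(x)\approx\Phi^*(|x|^{-1})/|x|^d$ for $Y$ on $\Rdz$; since $c_1\Phi^*\le\psi^*\le c_1^{-1}\Phi^*$ these agree up to a constant, so $\nu\approx\nu_Y$. For the transition densities, \eqref{allcomp} gives $p_t(x)\approx[\psi^-(1/t)]^d\wedge t\nu(x)$ and $p_t^Y(x)\approx[(\Phi^*)^-(1/t)]^d\wedge t\nu_Y(x)$ for all $t>0$, $x\in\Rd$. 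Applying the elementary fact (stated before Lemma~\ref{ltst}) that $c\phi\le\varphi$ implies $\phi^-(u)\ge\varphi^-(cu)$ to the two-sided inequality $c_1\Phi^*\le\psi^*\le c_1^{-1}\Phi^*$, and then absorbing the fixed constant $c_1$ using the scaling of $\psi^-$ from Lemma~\ref{scpsii}, one obtains $\psi^-(1/t)\approx(\Phi^*)^-(1/t)$; combined with $\nu\approx\nu_Y$ this yields $p_t\approx p_t^Y$, completing the proof.

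The argument is essentially bookkeeping; the only point needing care — and the main (minor) obstacle — is checking that the construction of $\varphi$ and the comparison \eqref{comp10} in the proof of Theorem~\ref{NWSR} are genuinely independent of hypothesis \emph{(iii)} there (they are: only isotropy, unimodality, \eqref{wml}, pure-jumpness and Corollary~\ref{ch1Vpc} enter), together with verifying that the resulting $\varphi$ is a nontrivial complete Bernstein function so that $Y$ is a bona fide complete subordinate Brownian motion with the required scalings.
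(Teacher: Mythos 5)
Your proposal is correct and follows essentially the same route the paper indicates (the paper merely cites Corollary~\ref{nuApprox}, \eqref{aml}, \eqref{comp10} and Theorem~\ref{NWSR} as ingredients, and you correctly identify and assemble them): reuse the complete Bernstein function $\varphi$ built in the proof of Theorem~\ref{NWSR}, note that \eqref{comp10} needs only unimodality and \eqref{wml}, compare characteristic exponents via \eqref{comp10}, then transfer the comparison to the L\'evy densities through \eqref{aml} and to the transition densities through \eqref{allcomp} and the scaling of $\psi^-$ from Lemma~\ref{scpsii}. Your observation that pure-jumpness of $X$ disposes of the Gaussian part independently of hypothesis \emph{(iii)} is also the right way to see that the construction of $\varphi$ applies here.
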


\kb{We see from Corollary~\ref{nnc} that under global lower and upper scaling, the asymptotics of the characteristic exponent, L\'{e}vy measure and transition density of complete subordinate Brownian motions are representative for all unimodal L\'evy processes. This is certainly not the case under local scaling, see Section~\ref{sex}.
To indicate an application
 of Corollary~\ref{nnc}, 
we remark that the
boundary Harnack principle with explicit decay rate \cite[Theorem 4.7]{2012arXiv1212.3092K} can now be extended to 
general  unimodal L\'evy processes with continuous L\'evy density and global scaling, see \cite[Proposition~7.6]{BGR2}.
}

We close our discussion with a related result, which negotiates the asymptotics of the L\'evy density (at zero) and the L\'evy-Khintchine exponent (at infinity)
under {\it approximate} unimodality and weak local scaling conditions. We hope the result will help extend the  {\it common bounds}.

Recall our conventions: $0<\la,\ua<2$, $0<\lC\le 1\le \uC<\infty$.
\begin{proposition}\label{tau} Let
$X$ be a pure-jump symmetric
L\'{e}vy process with
L\'{e}vy measure $\nu(dx)=\nu(x)dx$ and characteristic exponent $\psi$.
Suppose that $
\theta\in [0,\infty)$,  constant $c\in (0,1]$ and nondecreasing function $f:(0,\infty)\to (0,\infty)$ are such that
\begin{equation*}\label{au}
c\frac{f(1/|x|)}{|x|^d}\leq \nu(x)\leq
c^{-1}\frac{f(1/|x|)}{|x|^d},\quad 0<|x|<
{1/\theta}.
\end{equation*}
If  $f\in \textrm{WLSC}(\la,
\theta,\lC) \cap \textrm{WUSC}(\ua,
\theta,\uC)$, then {$f(|\xi|)$ and $\psi{(\xi)}$ are comparable for $|\xi|>
\theta$. In fact, there is a complete subordinate Brownian motion whose
characteristic exponent is comparable to $\psi(x)$ for $|\xi|>
\theta$,
and whose L\'{e}vy measure is comparable to $\nu$ on $B_{
1/\theta}\setminus \{0\}$.}
\end{proposition}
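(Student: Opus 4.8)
The plan is to reduce the problem to the (isotropic) unimodal situation already analysed in the paper: I replace $\nu$ near the origin by a comparable \emph{radial} Lévy density $\nu_0$, compute the characteristic exponent $\psi_0$ of $\nu_0$ essentially by hand, transfer the estimate to $\psi$, and finally produce the complete subordinate Brownian motion exactly as in the proof of Theorem~\ref{NWSR}.

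First I would set $g(r)=f(1/r)/r^{d}$ for $0<r<1/\theta$. Since $f$ is nondecreasing and $r\mapsto r^{-d}$ is decreasing, $g$ is nonincreasing, so $\nu_0(dx):=g(|x|)\mathbf 1_{B_{1/\theta}}(x)\,dx$ is an isotropic unimodal measure; it satisfies \eqref{wml} because, in polar coordinates with the substitution $u=1/|x|$, the relevant integral equals $\omega_d\int_\theta^\infty u^{-3}f(u)\,du$ up to a finite contribution from a compact annulus, and this converges thanks to $\textrm{WUSC}(\ua,\theta,\uC)$ with $\ua<2$ (and $\la>0$). By hypothesis $c\,\nu_0\le\nu\le c^{-1}\nu_0$ on $B_{1/\theta}\setminus\{0\}$. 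Let $\psi_0$ be the characteristic exponent of the (unimodal) pure-jump Lévy process with Lévy measure $\nu_0$, and $h_0(r)=\int_{\Rd}(1\wedge|x|^{2}/r^{2})\,\nu_0(dx)$. By Corollary~\ref{ch1Vpc}, $\psi_0(v)\approx\psi_0^{*}(v)\approx h_0(1/v)$ with a dimensional constant, and the same substitution gives, for $v>\theta$,
\begin{equation*}
h_0(1/v)=\omega_d\Big(\int_\theta^{v}f(u)\,\frac{du}{u}+v^{2}\int_{v}^{\infty}f(u)\,\frac{du}{u^{3}}\Big).
\end{equation*}
The first term is at most $\tfrac1{\lC\la}\omega_d f(v)$ by $\textrm{WLSC}(\la,\theta,\lC)$, while the second lies between $\tfrac38\omega_d f(v)$ (monotonicity of $f$ on $[v,2v]$) and $\tfrac{\uC}{2-\ua}\omega_d f(v)$ by $\textrm{WUSC}(\ua,\theta,\uC)$. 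Hence $h_0(1/v)\approx f(v)$, so $\psi_0(v)\approx f(v)$ for $v>\theta$ with constants depending only on $d,\la,\lC,\ua,\uC$; in particular $\psi_0\in\textrm{WLSC}(\la,\theta,\cdot)\cap\textrm{WUSC}(\ua,\theta,\cdot)$.

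Next I would transfer this to $\psi$. Since $X$ is pure-jump and symmetric, $\psi(\xi)=\int_{\Rd}(1-\cos\langle\xi,x\rangle)\,\nu(dx)$; splitting over $B_{1/\theta}$ and its complement and using $\nu\approx\nu_0$ on $B_{1/\theta}$ (where $\nu_0$ is supported) yields $c\,\psi_0(\xi)\le\psi(\xi)\le c^{-1}\psi_0(\xi)+2\nu(B_{1/\theta}^{c})$, with $\nu(B_{1/\theta}^{c})<\infty$ (this term is absent when $\theta=0$). For $|\xi|>\theta$ we have $\psi_0(\xi)\approx f(|\xi|)\ge f(\theta)>0$ when $\theta>0$, so the additive constant is absorbed and $\psi(\xi)\approx f(|\xi|)\approx\psi_0(\xi)$ for $|\xi|>\theta$, which is the first assertion. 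For the ``in fact'' part, as in the proof of Theorem~\ref{NWSR} I would form the complete Bernstein function $\varphi_0(\lambda)=\int_0^\infty\frac{\lambda}{\lambda+s}\,s^{-1}\nu_0(s^{-1/2})s^{-d/2}\,ds\approx\omega_d^{-1}h_0(\lambda^{-1/2})$ and let $Z$ be the complete subordinate Brownian motion with exponent $\varphi_0(|\xi|^{2})$. Then $\psi_Z\approx\psi_0$ for all $\xi$ (by Corollary~\ref{ch1Vpc}), hence $\psi_Z\approx\psi$ for $|\xi|>\theta$; and by Remark~\ref{SvC} $\psi_Z$ has local scalings with threshold $\theta$, so Corollary~\ref{nuApprox} and \eqref{onuzg} give $\nu_Z(x)\approx\psi_Z^{*}(1/|x|)/|x|^{d}\approx\nu_0(x)\approx\nu(x)$ for $|x|<r_0/\theta$, the remaining annulus $r_0/\theta\le|x|<1/\theta$ being handled by continuity and positivity of both sides. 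Equivalently, $\nu_Z(x)=\int_0^\infty\nu_0(s^{-1/2})s^{-d/2}G_s(x)\,ds$, with $G_s$ the $s$-resolvent density of Brownian motion ($G_s(x)\asymp|x|^{2-d}$ while $s|x|^{2}\lesssim1$, exponentially small otherwise), and carrying out this integral using the scaling of $f$ also gives $\nu_Z\approx\nu_0$ on $B_{1/\theta}\setminus\{0\}$ directly.

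The computation of $h_0$ is routine. The genuine obstacle is the Lévy-density comparison for the complete SBM on the \emph{full} range $0<|x|<1/\theta$, rather than only on the ball $B_{r_0/\theta}$ where the scaling machinery of Theorem~\ref{densityApprox} applies: this forces one either to push the comparison through the Gaussian-subordination integral $\int_0^\infty\nu_0(s^{-1/2})s^{-d/2}G_s(x)\,ds$, or to supplement Corollary~\ref{nuApprox} by a compactness argument on the outer annulus; and it is precisely here that, when $\theta>0$, the comparison constants acquire a dependence on $\theta$ and on $\nu$ (through $f(\theta)$ and $\nu(B_{1/\theta}^{c})$), whereas for $\theta=0$ everything is global and the constants depend only on $d,c,\la,\lC,\ua,\uC$.
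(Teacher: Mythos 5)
Your proposal is correct and follows essentially the same route as the paper: truncate and radialize the L\'evy density to $f(1/|x|)|x|^{-d}\mathbf 1_{B_{1/\theta}}$, show its exponent is comparable to $f$ above level $\theta$ (you do both bounds by computing $h_0$ directly, while the paper gets the lower bound from \eqref{onuzg} and the upper bound from Corollary~\ref{ch1Vpc} via the same scaling integral), absorb the bounded contribution of $\nu$ outside $B_{1/\theta}$, and then build the complete subordinate Brownian motion exactly as in the proof of Theorem~\ref{NWSR}, comparing L\'evy densities on $B_{r_0/\theta}$ by Corollary~\ref{nuApprox} and \eqref{onuzg} and on the outer annulus by two-sided boundedness (the paper uses \eqref{lmSBM} and monotonicity of $f$ for this last step, matching your remark). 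The differences are only in bookkeeping, not in substance.
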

\begin{proof}
Let $Y$ be the pure-jump unimodal L\'{e}vy process with L\'{e}vy density
$$\nu^Y(x)=f(1/|x|)|x|^{-d}\textbf{1}_{B_{
1/\theta}}(x), \qquad x\in \Rdz.$$ Let $\psi^Y$ be the characteristic exponent of $Y$.
Using $C=C(d)$ of \eqref{onuzg} and Proposition \ref{Vestimate}, we get
\begin{equation}\label{lbphi}
f(|\xi|) \leq {\pi^2C}\psi^Y(\xi),\qquad {\xi\in \Rdz}.
\end{equation}
On the other hand Corollary \ref{ch1Vpc} yields
${C'}
=C'
(d)$ such that
\begin{equation*}
\psi^Y(\xi)\leq
C'
 \int_{B_{{1/\theta}}}\[(|\xi||z|)^2\wedge1 \]\frac{f(1/|z|)}{|z|^d}dz,\quad {\xi\in \Rd}.
\end{equation*}
By scaling
of $f$, for $|\xi|>\theta
$ we have
\begin{align}
\psi^Y(\xi)&\leq C'
 f(|\xi|)\(\uC|\xi|^{2-\ua}\int_{B_{1/|\xi|}}\frac{dz}{|z|^{d+\ua-2}}+\lC^{-1}|\xi|^{-\la}\int_{
(B_{1/|\xi|})^{{c}}}\frac{dz}{|z|^{d+\la}}\)\nonumber\\
&=C'
\omega_d\(\frac{\uC}{2-\ua}+\frac1{\lC\,\la}\)f(|\xi|).\label{ubphi}
\end{align}
By the symmetry $\nu(z)=\nu(-z)$,
\begin{equation}\label{cppY}
\psi(\xi)=\int_\Rd (1-\cos \langle \xi,z \rangle)\nu(z)dz\approx \psi^Y(\xi)+
\int_{(B_{{1/\theta}})^c} (1-\cos \langle \xi,z \rangle)\nu(z)dz,
\end{equation}
in particular $\psi\approx \psi^Y$ on $\Rd$ if $
\theta=0$. If $
\theta>0$, then the last integral in \eqref{cppY} is bounded by $2\nu\((B_{{1/\theta}})^c\)<\infty$; by Proposition \ref{Vestimate} we have
$$\psi^Y(\xi)\geq \psi^Y(
\theta)/\pi^2>0,
\qquad
\quad |x|
>
\theta,$$
and so $\psi
(\xi)\approx \psi^{{Y}}(\xi)\approx f(|\xi|)$ for $|\xi|
>
\theta$,
where in the second comparison we used  \eqref{lbphi} and \eqref{ubphi}.
It follows that
$
\psi^Y\in  \textrm{WLSC}(\la,
\theta,\lC_1)\cap \textrm{WUSC}(\ua,
\theta,\uC_1)$.

\mr{As in the proof of Theorem \ref{NWSR}, we now construct a complete Bernstein function $\varphi$ such that \eqref{comp10} holds with $\psi^Y$ and $\phi$. Hence $Z$, the complete subordinate Brownian motion determined by $\varphi$, has characteristic exponent $\psi^Z$  comparable  with $\psi^Y$ on $\Rd$.  }
Its  L\'evy density $\nu^Z$ is comparable with
$\nu^Y$ on $B_{r_0/\theta}\setminus\{0\}$ by \eqref{onuzg} and Corollary \ref{nuApprox}, where $r_0=r_0(d,\la,\lC_1,\ua,\uC_1)$.
The comparability of $\nu^Z(x)$
and $\nu^Y(x)$
also takes place on $B_{1/\theta}\setminus B_{r_0/\theta}$ because the functions are bounded from above and below on the set, as follows from \eqref{lmSBM} and monotonicity of $f>0$. Thus, $\nu$ and $\nu^Z$ are comparable on $B_{1/\theta}$.
\end{proof}
To clarify,
the semigroup
of the process $X$ in Proposition~\ref{tau}
is not necessarily unimodal, hence
its estimates by $f$
call for other methods, e.g. those based on
$\nu$, mentioned in Section~\ref{introit}.

\vspace{10pt}

\noindent
{\large {\textbf{Acknowledgements}}}\\
We thank the referee for helpful comments on the presentation and scaling conditions.
Tomasz Grzywny was supported by the Alexander von Humboldt Foundation and
expresses his gratitude for
the hospitality of Technische Unversit\"{a}t Dresden, where the paper was written in main part.
Krzysztof Bogdan gratefully thanks the Department of Statistics of Stanford University for hospitality during his work on the paper.

\end{document}